\newcommand{\todo}[1]{}
\newcommand{\vu}{\textcolor{black}{\mathbf{u}}}
\newcommand{\vv}{\textcolor{black}{\mathbf{v}}}
\newcommand{\vb}{\textcolor{black}{\mathbf{b}}}
\newcommand{\ve}{\textcolor{black}{\mathbf{e}}}
\newcommand{\vO}{\textcolor{black}{\mathbf{0}}}
\newcommand{\vx}{\textcolor{black}{\mathbf{x}}}
\newcommand{\vy}{\textcolor{black}{\mathbf{y}}}
\newcommand{\uI}{\textcolor{black}{\boldsymbol{\mathsf{i}}}}
\newcommand{\uJ}{\textcolor{black}{\boldsymbol{\mathsf{j}}}}
\newcommand{\uK}{\textcolor{black}{\boldsymbol{\mathsf{k}}}}
\newcommand{\nn}{\textcolor{black}{\mathbb{N}}}
\newcommand{\zz}{\textcolor{black}{\mathbb{Z}}}
\newcommand{\rr}{\textcolor{black}{\mathbb{R}}}
\newcommand{\cc}{\textcolor{black}{\mathbb{C}}}
\newcommand{\hh}{\textcolor{black}{\mathbb{H}}}
\renewcommand{\SS}{\textcolor{black}{\mathbb{S}}}
\renewcommand{\Re}{\mathrm{Re}}
\newcommand{\id}{\textcolor{black}{\mathcal{I}}}
\newcommand{\boundOP}{\textcolor{black}{\mathcal{B}}}
\newcommand{\dom}{\operatorname{dom}}
\newcommand{\hil}{\textcolor{black}{\mathcal{H}} }
\newcommand{\QMLat}{\textcolor{black}{\mathcal{L}}}
\newcommand{\MM}{\textcolor{black}{\mathfrak{M}}}
\newcommand{\Center}[1]{\textcolor{black}{\mathcal{C}(#1)}}
\newcommand{\VNAlg}{\textcolor{black}{\mathfrak{R}}}
\newcommand{\II}{\textcolor{black}{\mathsf{I}}}
\newcommand{\JJ}{\textcolor{black}{\mathsf{J}}}
\newcommand{\KK}{\textcolor{black}{\mathsf{K}}}
\newcommand{\EHil}[2]{\textcolor{black}{{E}_{#1}(#2)}}
\newcommand{\poincare}{\mathcal{P}}
\newcommand{\aut}{\mathrm{Aut}}
\newcommand{\Borel}{\textcolor{black}{\mathsf{B}}}
\newcommand{\linspan}[1]{\textcolor{black}{\operatorname{span}_{#1}}}
\newcommand{\clos}[2][]{\textcolor{black}{cl_{#1}(#2)}}
\numberwithin{equation}{section}
\theoremstyle{plain}
\newtheorem{theorem}{Theorem}[section]
\newtheorem{corollary}[theorem]{Corollary}
\newtheorem{conjecture}[theorem]{Conjecture}
\theoremstyle{definition}
\newtheorem{definition}[theorem]{Definition}
\theoremstyle{remark}
\newtheorem{remark}[theorem]{Remark}
\crefname{enumi}{}{}
\crefname{enumii}{}{}
\author{Jonathan Gantner
}
\title{On the Equivalence of Complex and Quaternionic Quantum Mechanics}
\begin{document}
\maketitle

\begin{abstract}
Due to the existence of incompatible observables, the propositional calculus of a quantum system does not form a Boolean algebra but an orthomodular lattice. Such lattice can be realised as a lattice of subspaces on a real, complex or quaternionic Hilbert space, which motivated the formulation of real and quaternionic quantum mechanics in addition to the usual complex formulation. It was argued that any real quantum system admits a complex structure that turns it into a complex quantum system and hence real quantum mechanics was soon discarded. Several authors however developed a quaternionic version of quantum mechanics and this version did not seem to be equivalent its standard formulation on a complex Hilbert space.

Motivated by some recently developed techniques from quaternionic operator theory, we conjecture in this article that this not correct and that any quaternionic quantum system is actually simply the quaternionification of a complex quantum system. We then show that this conjecture holds true for quaternionic relativistic elementary systems by applying some recent arguments that were used to show the equivalence of real and complex elementary relativistic systems. Finally, we conclude by discussing how the misconception that complex and quaternionic quantum mechanics are inequivalent arose from assuming the existence of a left multiplication on the Hilbert space, which is physically not justified.
\end{abstract}

Birkhoff and von Neumann argued in \cite{Birkhoff:1936} that the propositional calculus of a quantum system, which captures the system's logical structure, does due to the existence of incompatible observables not form a Boolean algebra but an orthomodular lattice. Such lattice can be realised as a lattice of subspaces on a real, complex or quaternionic Hilbert space and this motivated the formulation of quantum mechanics not only on complex, but also on real or quaternionic Hilbert spaces. The possibility of real quantum mechanics was however discarded after Stueckelberg argued in \cite{Stueckelberg:1960,Stueckelberg:1961} that any such system admits a complex structure that turns it into a complex quantum system. Quaternionic quantum mechanics however attracted some attention and was developed by several authors after its foundational paper \cite{Finkelstein:1962}. The results of their efforts were finally gathered in the monograph \cite{Adler:1995}, but then the interest dropped---also due to a lack of rigorous mathematical techniques for treating quaternionic linear operators, that made developing the theory further difficult.

The theory of quaternionic linear operator was developed during the last ten to fifteen years after its fundamentals techniques were understood. For an overview on this field we refer to \cite{Colombo:2011}, but we stress that even the spectral theorem for unbounded normal quaternionic linear operators has recently been proved in \cite{Alpay:2016}. 

The quaternionic $\hh$ are a number field that extends the complex numbers. Any quaternion is of the form $x = x_0 + \underline{x} = x_0 + x_1e_1 + x_2e_2 + x_3 e_3$ where the generating units satisfy $e_{\ell}^2 = -1$ and $e_{\ell}e_{\kappa} = - e_{\kappa}e_{\ell}$ for $\ell,\kappa\in\{1,2,3\}$ with $\ell \neq \kappa$. Hence, the 
quaternionic multiplication is not commutative and this caused severe difficulties in developing a proper theory of quaternionic linear operators. Any quaternion can be written in the form $x = x_0 + \uI_x \tilde{x}_1$, where a $\tilde{x}_1 = |\underline{x}|$ and $\uI_{x} = \underline{x}/|\underline{x}|$ belongs to the set of imaginary units $\SS = \{\uI\in\hh: \uI^2 = -1\}$. The set  $\cc_{\uI} = \{ x_0 + \uI x_1: x_0,x_1\in\rr\}$ for a fixed imaginary unit $\uI\in\SS$ is an isomorphic copy of the complex number. Furthermore, if $\uJ\in\SS$ with $\uI\perp\uJ$, then $\uI$ and $\uJ$ anti-commute and any quaternions $x\in\hh$ can be written in the form $x = z_1 + z_2 \uJ$ with unique $z_1,z_2\in\cc_{\uI}$. 
 
One approach to operator theory in the quaternionic setting consists in considering the quaternionic right linear space $V_{R}$ as the quaternionification $V_{R} = V_{\uI} \oplus V_{\uI}\uJ$ with $\uI,\uJ\in\SS$ and $\uI\perp\uJ$ of a properly chosen $\cc_{\uI}$-complex subspace $V_{\uI}$ of $V_{R}$. If this subspace is chosen to suit a certain quaternionic linear operator $T$, then this operator is simply the quaternionic linear extension of a complex linear operator $T_{\uI}$ on the complex component space $V_{\uI}$. It is then sufficient to study the complex linear operator $T_{\uI}$  in order to fully understand the quaternionic linear operator $T$ and this can be done using traditional techniques from complex operator theory. This strategy is often not applicable, but in particular for normal operators on quaternionic Hilbert spaces it is very useful. If $T$ is  for instance an operator on a quaternionic Hilbert space $\hil$ and there exists a unitary and anti-selfadjoint operator $\JJ$ on $\hil$ that commutes with $T$, then we can choose $\uI\in\SS$ and define 
\[
\hil_{\JJ,\uI}^{+} := \{ \vv\in\hil: \JJ\vv = \vv\uI\}\quad\text{and}\quad \hil_{\JJ,\uI}^{-} := \{ \vv\in\hil: \JJ\vv = \vv(-\uI)\}.
\]
The sets $\hil_{\JJ,\uI}^{+}$ and $\hil_{\JJ,\uI}^{-}$ are $\cc_{\uI}$-complex Hilbert spaces with the operations and the scalar product they inherit from $\hil$. Furthermore 
\[
\hil = \hil_{\JJ,\uI}^{+}\oplus \hil_{\JJ,\uI}^{-}
\]
and if $\uJ\in\SS$ with $\uI\perp\uJ$, then $\vv\mapsto\vv\uJ$ is a $\cc_{\uI}$-antilinear isometric bijection from $\hil_{\JJ,\uI}^{+}$ to $\hil_{\JJ,\uI}^{-}$ and so any $\vv\in\hil$ can be written as $\vv = \vv_{1} + \vv_{2}\uJ$ with $\vv_{1},\vv_{2}\in\hil_{\JJ,\uI}^{+}$. The operator $T$ leaves the space $\hil_{\JJ,\uI}^{+}$ invariant as $\JJ T(\vv) = T\JJ(\vv) = (T\vv)\uI$ for any $\vv\in\hil_{\JJ,\uI}^{+}$. The $\cc_{\uI}$-linear operator  $T_{\cc_{\uI}} := T|_{\hil_{{\JJ,\uI}^{+}}}$ on $\hil_{\JJ,\uI}^{+}$ has then properties that are analogue to the one of $T$. The spectrum $\sigma(T_{\cc_{\uI}})$ is a subset of $\sigma_{S}(T)\cap\cc_{\uI}$ and $T_{\cc_{\uI}}$ is bounded, normal, (anti-)selfadjoint or unitary if and only if $T$ has these properties. These ideas go back to \cite{Teichmuller:1936} and were used intensively for instance in \cite{Ghiloni:2013}. The above approach is however not suitable for all quaternionic linear operators---an example of an operator that cannot be treated as the quaternionic linear extension of a complex linear operator on a suitable component space is for instance provided in \cite[Example~5.18]{SpectralOperators}.

In this paper we use the fundamental understanding that was gained about quaternionic linear operators in recent years in order to argue that there exists a substantial logical flaw in the current formulation of quaternionic quantum mechanics. In particular, we conjecture that the any quaternionic quantum system can be reduced to complex quantum mechanics on a suitable component space, just as certain quaternionic linear operators can be reduced to complex linear operators on a suitable complex component space as described above. We furthermore show that this conjecture holds true for elementary relativistic systems in the sense of \cite{Moretti:2017}, which seem to be the only type systems for which also the equivalence of real and complex quantum mechanics is shown properly. Finally, we conclude with a section that discusses how the misconception that complex and quaternionic quantum mechanics are not equivalent arose from the wrong assumption that there exists a physically determined quaternionic left multiplication on the Hilbert space that serves as state space.

\section{Preliminary Results: Internal and External Quaternionification}
We start with recalling two methods for constructing complex and quaternionic Hilbert spaces starting from a real Hilbert space, the procedures of {\em internal} and {\em external} complexification and quaternionification as developed by Sharma in \cite{Sharma:1988}.

Let $\hil$ be a real Hilbert space. Then we can construct a complex Hilbert space $\EHil{\cc}{\hil} = \hil\otimes \cc$ from $\hil$ by considering couples of vectors in $\hil$. Precisely, we define
$\EHil{\cc}{\hil} :=\hil^2$, set for $(\vv_1,\vv_2),(\vu_1,\vu_2)\in\EHil{\cc}{\hil}$ and $a = a_0 + i a_1\in\cc$
\begin{align*}
(\vv_1,\vv_2) + (\vu_1,\vu_2) :=& (\vv_1+ \vu_1, \vv_2 + \vu_2)\\
 a (\vv_1,\vv_2) := &(a_1\vv_1 - a_2\vv_2, a_1\vv_2 + a_2\vv_1)
 \end{align*}
and define a complex scalar product on $\EHil{\cc}{\hil}$ as 
\begin{align*}
\left\langle (\vv_1,\vv_2), (\vu_1,\vu_2)\right\rangle_{\EHil{\cc}{\hil}} :=& \langle \vv_1,\vu_1\rangle_{\hil} + \langle \vv_2,\vu_2\rangle_{\hil}  + i \left( \langle \vv_1,\vu_2\rangle_{\hil} - \langle \vv_2,\vu_1\rangle_{\hil} \right).
\end{align*}
This is the standard procedure for complexifying a real Hilbert space and it corresponds to writing the couples $(\vv_1,\vv_2)$ in $\EHil{\cc}{\hil}$ as $\vv_1 + i \vv_2$ and performing formal computations using the structure on $\hil$ and the fact that $i^2 = -1$. We shall use this notation in the following since it is more convenient.
\begin{definition}
Let $\hil$ be a real Hilbert space. We call the complex Hilbert space $\EHil{\cc}{\hil} = \hil\otimes\cc$ the {\em external complexification} of $\hil$. 
\end{definition}
Any operator $T$ on $\hil$ has a unique complex linear extension $T_{\cc}$ to $\EHil{\cc}{\hil}$ that is obtained by componentwise application, namely 
\[
T_{\cc}(\vv_1 + i\vv_2) = T(\vv_1) + i T(\vv_2).
\]
\pagebreak[2]
\begin{theorem}
Let $\hil$ be a real Hilbert space and let $\EHil{\cc}{\hil}$ be its external complexification.
\begin{enumerate}[(i)]
\item The space $\EHil{\cc}{\hil}$ is a complex Hilbert space of the same dimension as $\hil$ and a set of vectors in $\hil$ is an orthonormal basis of $\hil$ if and only if it is an orthonormal basis of $\EHil{\cc}{\hil}$.  
\item If $T:\dom(T)\subset\hil\to\hil$ is an $\rr$-linear operator, then the domain of its complex linear extension $T_{\cc}$ is $\dom(T_{\cc}) = \dom(T) + \dom(T) i$. Furthermore, $T$ is bounded if and only if $T_{\cc}$ is bounded and in this case $\|T \| = \|T_{\cc}\|$. The extension is compatible with the adjoint, that is $(T_{\cc})^* =  T_{\cc}^{*}$ and consequently $T_{\cc}$ is (anti-)\linebreak[1]selfadjoint, normal or unitary on $\EHil{\cc}{\hil}$ if and only if $T$ is (anti-)selfadjoint, normal or unitary on $\hil$. 
\end{enumerate}
\end{theorem}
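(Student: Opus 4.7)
For part (i), the strategy is direct verification of the Hilbert space axioms. First I would check that the prescribed operations turn $\EHil{\cc}{\hil}$ into a complex vector space, using the notational identification $(\vv_1,\vv_2)\leftrightarrow \vv_1 + i\vv_2$, and that $\langle\cdot,\cdot\rangle_{\EHil{\cc}{\hil}}$ is a Hermitian sesquilinear form whose positive definiteness is a consequence of the identity $\|\vv_1 + i\vv_2\|_{\EHil{\cc}{\hil}}^2 = \|\vv_1\|_\hil^2 + \|\vv_2\|_\hil^2$. This same identity yields completeness: a Cauchy sequence in $\EHil{\cc}{\hil}$ induces Cauchy sequences of real and imaginary parts in $\hil$, whose limits recombine to the required limit. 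For the basis statement, any real orthonormal family $\{\ve_\alpha\}\subset\hil$ remains orthonormal in $\EHil{\cc}{\hil}$ since its members have zero imaginary part, so all cross terms in the scalar product vanish; totality in $\EHil{\cc}{\hil}$ follows by separately expanding $\vv_1$ and $\vv_2$ in $\hil$. Equality of dimensions is then immediate.

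For part (ii), the domain identity holds essentially by the definition of $T_\cc$. For the norm statement I would compute
\[
\|T_\cc(\vv_1+i\vv_2)\|_{\EHil{\cc}{\hil}}^2 = \|T\vv_1\|_\hil^2 + \|T\vv_2\|_\hil^2,
\]
from which $\|T_\cc\|\le\|T\|$ is immediate, while the reverse bound comes from restricting to vectors with $\vv_2=\vO$.

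The most delicate step is the adjoint identity $(T_\cc)^* = (T^*)_\cc$, because it requires a careful tracking of domains in the unbounded case. Given $\vw = \vu_1 + i\vu_2 \in \EHil{\cc}{\hil}$, membership of $\vw$ in $\dom((T_\cc)^*)$ means continuity of $\vv_1 + i\vv_2 \mapsto \langle T_\cc(\vv_1+i\vv_2),\vw\rangle_{\EHil{\cc}{\hil}}$ on $\dom(T_\cc) = \dom(T) + i\dom(T)$. Setting $\vv_2 = \vO$ forces continuity of $\vv_1\mapsto \langle T\vv_1,\vu_1\rangle_\hil - i\langle T\vv_1,\vu_2\rangle_\hil$ on $\dom(T)$, so both $\vu_1$ and $\vu_2$ belong to $\dom(T^*)$, hence $\vw\in\dom((T^*)_\cc)$. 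The converse inclusion and the formula $(T_\cc)^*\vw = T^*\vu_1 + i T^*\vu_2 = (T^*)_\cc \vw$ then follow by a direct computation from the explicit form of the scalar product. From this identity, the equivalences for being (anti-)selfadjoint or unitary are immediate, and normality follows from $(T_\cc)^*T_\cc = (T^*T)_\cc$ together with $T_\cc(T_\cc)^* = (TT^*)_\cc$, using the injectivity of the extension map $T\mapsto T_\cc$.
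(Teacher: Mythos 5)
Your proof is correct, and in fact the paper offers no proof of this statement at all: it is recalled as a known preliminary (the construction goes back to Sharma), so the direct verification you give is exactly what is implicitly being invoked. Two cosmetic points: with the paper's convention $\langle (\vv_1,\vv_2),(\vu_1,\vu_2)\rangle_{\EHil{\cc}{\hil}} = \langle \vv_1,\vu_1\rangle + \langle \vv_2,\vu_2\rangle + i(\langle \vv_1,\vu_2\rangle - \langle \vv_2,\vu_1\rangle)$, restricting to $\vv_2=\vO$ gives the functional $\vv_1\mapsto \langle T\vv_1,\vu_1\rangle_{\hil} + i\langle T\vv_1,\vu_2\rangle_{\hil}$ rather than the one with $-i$ that you wrote (harmless, since continuity of the real and imaginary parts is what matters); and the basis claim is an ``if and only if,'' so you should also record the equally routine converse that an orthonormal basis of $\EHil{\cc}{\hil}$ consisting of vectors of $\hil$ is total in $\hil$, which follows since a vector of $\hil$ orthogonal to all of them in $\hil$ is orthogonal to them in $\EHil{\cc}{\hil}$ as well.
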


Similarly, we can construct a quaternionic Hilbert space $\EHil{\hh}{\hil}$ from a real Hilbert space $\hil$. We can choose $\uI_1,\uI_2\in\SS$ with $\uI_1\perp\uI_2$ and set $\uI_0 = 1$ and $\uI_3 = \uI_1\uI_2$, so that any quaternion $a\in\hh$ can be written as $a = \sum_{\ell=0}^3a_{\ell}\uI_{\ell}$. We then set
\[
\EHil{\hh}{\hil} := \hil\otimes\hh  \cong \left\{ \sum_{\ell = 0}^3\vv_{\ell}\uI_{\ell}: \vv_{\ell}\in\hil\right\},
\]
and define for $ \vv = \sum_{\ell = 0}^3\vv_{\ell}\uI_{\ell}$ and $\vu = \sum_{\ell = 0}^3\vu_{\ell}\uI_{\ell}$ in $\EHil{\hh}{\hil}$ and  $a = \sum_{\ell=0}^3a_{\ell}\uI_{\ell}\in\hh$ the operations
\[
\vv + \vu  :=  \sum_{\ell = 0}^3(\vv_{\ell}+\vu_{\ell})\uI_{\ell},\qquad 
\vv a :=  \sum_{\ell, \kappa= 0}^3(a_{\kappa}\vv_{\ell})\uI_{\ell}\uI_{\kappa},
\]
and
\[
\left\langle \vv, \vu \right\rangle_{\EHil{\hh}{\hil}} :=  \sum_{\ell,\kappa = 0}^3\langle\vv_{\ell},\vu_{\kappa}\rangle_{\hil}\uI_{\ell}\uI{\kappa}
\]
This yields a quaternionic right Hilbert space. The choice of $\uI_1$ and $\uI_2$ in this construction is irrelevant since a different choice yields a Hilbert space that is isometrically isomorphic to $\EHil{\hh}{\hil}$. 

\begin{definition}
Let $\hil$ be a real Hilbert space. We call the quaternionic Hilbert space $\EHil{\hh}{\hil} = \hil\otimes\hh$ the {\em external quaternionification} of $\hil$. 
\end{definition}

  Any operator $T$ on $\hil$ has a unique quaternionic linear extension $T_{\hh}$ to $\EHil{\hh}{\hil}$, which is obtained by componentwise application, namely 
\[
T_{\hh}(\vv) =  \sum_{\ell = 0}^3 T(\vv_{\ell})\uI_{\ell} \qquad \text{for } \vv =  \sum_{\ell = 0}^3\vv_{\ell}\uI_{\ell}\in\EHil{\hh}{\hil}. 
\]
\begin{theorem}
Let $\hil$ be a real Hilbert space and let $\EHil{\hh}{\hil}$ be its external complexification.
\begin{enumerate}[(i)]
\item The space $\EHil{\hh}{\hil}$ is a quaternionic Hilbert space of the same dimension as $\hil$ and a set of vectors in $\hil$ is an orthonormal basis of $\hil$ if and only if it is an orthonormal basis of $\EHil{\hh}{\hil}$.  
\item If $T:\dom(T)\subset\hil\to\hil$ is an $\rr$-linear operator, then the domain of its quaternionic linear extension $T_{\hh}$ is $\dom(T_{\hh}) = \dom(T)\oplus\hh$. Furthermore, $T$ is bounded if and only if $T_{\hh}$ is bounded and in this case $\|T\| = \|T_{\hh}\|$. The extension is compatible with the adjoint, that is $(T_{\hh})^* = T_{\hh}^{*}$ and so $T_{\hh}$ is (anti-)\linebreak[2]selfadjoint, normal or unitary on $\EHil{\hh}{\hil}$ if and only if $T$ is (anti-)selfadjoint, normal or unitary on $\hil$. 
\end{enumerate}
\end{theorem}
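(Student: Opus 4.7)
The plan is to exploit the orthogonal real-linear decomposition $\EHil{\hh}{\hil} = \bigoplus_{\ell=0}^3 \hil\,\uI_\ell$ built into the construction and to push every claim through this splitting, on each summand of which the structure is simply that of $\hil$ itself.

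For part (i), I first verify the axioms of a quaternionic right Hilbert space. The right-module laws and sesquilinearity of the scalar product follow by expanding the defining formulas on the four real components; hermitian symmetry and positivity reduce to the identity $\langle\vv,\vv\rangle_{\EHil{\hh}{\hil}} = \sum_{\ell=0}^3 \|\vv_\ell\|_\hil^2 \ge 0$, which at the same time identifies the norm on $\EHil{\hh}{\hil}$ with the Euclidean norm on $\hil^4$, so that completeness is inherited from $\hil$. For the basis claim, given an orthonormal basis $\{\ve_\alpha\}$ of $\hil$, the defining formula gives $\langle\ve_\alpha,\ve_\beta\rangle_{\EHil{\hh}{\hil}} = \delta_{\alpha\beta}$, and totality is checked by writing $\vv = \sum_\ell \vv_\ell \uI_\ell$, expanding each component $\vv_\ell = \sum_\alpha c_{\alpha,\ell}\ve_\alpha$ in the real basis, and rearranging to $\vv = \sum_\alpha \ve_\alpha q_\alpha$ with $q_\alpha = \sum_\ell c_{\alpha,\ell}\uI_\ell \in \hh$. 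Conversely, if $\{\ve_\alpha\} \subset \hil$ is an orthonormal basis of $\EHil{\hh}{\hil}$, then any $\vw \in \hil$ orthogonal to every $\ve_\alpha$ in $\hil$ is also orthogonal in $\EHil{\hh}{\hil}$ and hence vanishes, so $\{\ve_\alpha\}$ is an orthonormal basis of $\hil$.

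For part (ii), the domain statement is immediate from the componentwise definition of $T_\hh$. The norm identity rests on $\|\vv\|^2 = \sum_\ell \|\vv_\ell\|^2$ and $\|T_\hh\vv\|^2 = \sum_\ell \|T\vv_\ell\|^2$, from which $\|T_\hh\| \le \|T\|$ follows directly, while the reverse inequality is obtained by restricting to vectors $\vv = \vv_0 \in \hil$. For the adjoint relation $(T_\hh)^* = (T^*)_\hh$, a componentwise computation using the scalar product formula and the adjoint on $\hil$ yields
\[
\langle T_\hh\vv,\vu\rangle_{\EHil{\hh}{\hil}} = \sum_{\ell,\kappa}\langle T\vv_\ell,\vu_\kappa\rangle_\hil\,\overline{\uI_\ell}\,\uI_\kappa = \sum_{\ell,\kappa}\langle \vv_\ell,T^*\vu_\kappa\rangle_\hil\,\overline{\uI_\ell}\,\uI_\kappa = \langle\vv,(T^*)_\hh\vu\rangle_{\EHil{\hh}{\hil}}.
\]
The classification of (anti-)selfadjoint, normal, and unitary operators then follows by combining this identity with the fact that the extension map is an algebra morphism, i.e.\ $(TS)_\hh = T_\hh S_\hh$ and $\id_\hh = \id$.

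The main obstacle I anticipate is the careful bookkeeping needed to handle the non-commutative quaternionic multiplication inside the scalar product—in particular, placing the conjugates of the $\uI_\ell$ on the correct side so that right-sesquilinearity and positive definiteness both hold simultaneously. Once this setup is pinned down, the four-fold orthogonal decomposition reduces every remaining step to an application of the corresponding statement on the real Hilbert space $\hil$.
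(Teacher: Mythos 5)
The paper states this theorem without proof (it is quoted as a preliminary result going back to Sharma), so there is no in-paper argument to compare against; your componentwise approach via the real-linear splitting $\EHil{\hh}{\hil}=\bigoplus_{\ell=0}^{3}\hil\,\uI_{\ell}$ is the standard and correct one. You also rightly identified the one delicate point in the setup: as printed, the paper's scalar product $\sum_{\ell,\kappa}\langle\vv_{\ell},\vu_{\kappa}\rangle_{\hil}\uI_{\ell}\uI_{\kappa}$ is not positive (the diagonal would give $\|\vv_0\|^2-\|\vv_1\|^2-\|\vv_2\|^2-\|\vv_3\|^2$); your reading with $\overline{\uI_{\ell}}\,\uI_{\kappa}$ is the intended one, and with it parts (i) and the norm statements in (ii) go through exactly as you describe.

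The one genuine gap is in the adjoint identity for \emph{unbounded} $T$, which the theorem explicitly allows. Your displayed computation only shows $(T^{*})_{\hh}\subset(T_{\hh})^{*}$, i.e.\ that $(T^{*})_{\hh}$ is a restriction of the adjoint; to conclude $(T_{\hh})^{*}=(T^{*})_{\hh}$ --- and hence that selfadjointness of $T$ transfers to $T_{\hh}$, which for unbounded operators is a statement about domains --- you must also prove the reverse inclusion. This is easy with your decomposition but has to be said: if $\vu=\sum_{\kappa}\vu_{\kappa}\uI_{\kappa}\in\dom\bigl((T_{\hh})^{*}\bigr)$ with $(T_{\hh})^{*}\vu=\vw$, test the defining identity against vectors $\vv\in\dom(T)\subset\hil$ having only a $\ell=0$ component; comparing the four quaternionic components of $\langle T\vv,\vu\rangle_{\EHil{\hh}{\hil}}=\langle\vv,\vw\rangle_{\EHil{\hh}{\hil}}$ gives $\langle T\vv,\vu_{\kappa}\rangle_{\hil}=\langle\vv,\vw_{\kappa}\rangle_{\hil}$ for each $\kappa$, so each $\vu_{\kappa}\in\dom(T^{*})$ with $T^{*}\vu_{\kappa}=\vw_{\kappa}$, i.e.\ $\vu\in\dom\bigl((T^{*})_{\hh}\bigr)$. (You should also record that $\dom(T_{\hh})$ is dense whenever $\dom(T)$ is, so that the adjoint is defined at all.) With this addition the proof is complete.
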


Finally, we can construct in a similar manner a quaternionic Hilbert space from a complex Hilbert space. If $\hil$ is a complex Hilbert space over $\cc$, then we can choose $\uI,\uJ\in\SS$ with $\uI\perp\uJ$ and identify the imaginary unit $\uI\in\hh$ with the complex imaginary unit $i\in\cc$ so that $\hil$ becomes a Hilbert space over $\cc_{\uI}$. We can then set 
\[
\EHil{\hh}{\hil} := \hil\oplus\hil\uJ \cong \left\{\vv_1 + \vv_2\uJ: \vv_1,\vv_2\in\hil\right\}
\]
and define for $\vv = \vv_1+ \vv_2\uJ$ and $\vu = \vu_1 + \vu_2\uJ$ in $\EHil{\hh}{\hil}$ and $a = a_1  + a_2\uJ\in\hh$ with $a_1,a_2\in\cc_{\uI}$ the operations
\[
\vv + \vu := (\vv_1 + \vu_1) + (\vv_2 + \vu_2)\uJ,\quad \vv a := (\vv_1a_1 - \vv_2\overline{a_2}) + (\vv_1a_2 + \vv_2\overline{a_1})\uJ
\]
and the scalar product
\[
\langle\vv,\vu\rangle_{\EHil{\hh}{\hil}} := \langle \vv_1,\vu_1\rangle + \overline{\langle \vv_2,\vu_2\rangle} + \left(\langle\vv_1,\vu_2\rangle - \overline{ \langle\vv_2,\vu_1\rangle}\right)\uJ.
\]
This yields again a quaternionic right Hilbert space and the choice of $\uJ\in\SS$ with $\uI\perp\uJ$ in this construction is once more irrelevant since each choice yields a quaternionic Hilbert space that is isomorphically isomorphic to $\EHil{\hh}{\hil}$. 
\begin{definition}
Let $\hil$ be a complex Hilbert space. We call the quaternionic Hilbert space $\EHil{\hh}{\hil}$ the {\em external quaternionification} of $\hil$. 
\end{definition}
 Any complex linear operator $T$ on $\hil$  has a unique quaternionic linear extension $T_{\hh}$ to $\EHil{\hh}{\hil}$ that is obtained by component wise application, namely 
\[
T_{\hh}(\vv) = T(\vv_1) + T(\vv_2)\uJ \qquad \text{for } \vv = \vv_1 + \vv_2\uJ. 
\]
\begin{theorem}
Let $\hil$ be a complex Hilbert space and let $\EHil{\hh}{\hil}$ be its external quaternionification.
\begin{enumerate}[(i)]
\item The space $\EHil{\hh}{\hil}$ is a quaternionic Hilbert space of the same dimension as $\hil$ and a set of vectors in $\hil$ is an orthonormal basis of $\hil$ if and only if it is an orthonormal basis of $\EHil{\hh}{\hil}$.  
\item If $T:\dom(T)\subset\hil\to\hil$ is a complex-linear operator, then the domain of its quaternionic linear extension $T_{\hh}$ is $\dom(T_{\hh}) = \dom(T) + \dom(T)\uJ$. Furthermore, $T$ is bounded if and only if $T_{\hh}$ is bounded and in this case $\|T\| = \|T_{\hil}\|$. The extension is compatible with the adjoint, that is $(T_{\hh})^* = T_{\hh}^{*}$ and so $T_{\hh}$ is (anti-)\linebreak[2]selfadjoint, normal or unitary on $\EHil{\hh}{\hil}$ if and only if $T$ is (anti-)selfadjoint, normal or unitary on $\hil$. 
\end{enumerate}
\end{theorem}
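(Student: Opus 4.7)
The plan is to verify both items by direct computation in the decomposition $\vv = \vv_1 + \vv_2\uJ$ that defines $\EHil{\hh}{\hil}$, thereby reducing everything to statements about the underlying complex Hilbert space $\hil$.

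For part (i), I would first check the axioms of a quaternionic right Hilbert space. Additivity is immediate from the definition. Right $\hh$-linearity in the second slot of the scalar product follows from the definition of $\vu a$, the $\cc_{\uI}$-sesquilinearity of $\langle\cdot,\cdot\rangle_{\hil}$, and the anti-commutation rule $\uJ z = \overline{z}\,\uJ$ for $z\in\cc_{\uI}$. Setting $\vu = \vv$ and invoking the Hermitian symmetry of the complex inner product, the $\uJ$-component of $\langle\vv,\vv\rangle_{\EHil{\hh}{\hil}}$ collapses, yielding the Pythagorean identity
\[
\|\vv\|^2_{\EHil{\hh}{\hil}} = \|\vv_1\|^2_{\hil} + \|\vv_2\|^2_{\hil}.
\]
This identity gives positivity, non-degeneracy, and, because $\hil\oplus\hil\uJ$ becomes a direct orthogonal sum of two copies of $\hil$, completeness of $\EHil{\hh}{\hil}$.

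For the basis statement, observe that a vector $e_\alpha \in\hil$ corresponds to $e_\alpha + 0\cdot\uJ$, so the formula for the quaternionic scalar product reduces to $\langle e_\alpha, e_\beta\rangle_{\EHil{\hh}{\hil}} = \langle e_\alpha,e_\beta\rangle_{\hil}$, preserving orthonormality. For the span, any $\vv_1,\vv_2\in\hil$ expand in $\{e_\alpha\}$ as $\vv_i = \sum_\alpha e_\alpha z_{i\alpha}$ with $z_{i\alpha}\in\cc_{\uI}$, so
\[
\vv = \vv_1 + \vv_2\uJ = \sum_\alpha e_\alpha(z_{1\alpha} + z_{2\alpha}\uJ),
\]
a right-$\hh$ expansion in $\{e_\alpha\}$. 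This simultaneously shows the spanning property and the equality of dimensions.

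For part (ii), the domain equality is tautological from the componentwise definition of $T_{\hh}$. The Pythagorean identity applied to $T_{\hh}\vv = T\vv_1 + T\vv_2\uJ$ yields $\|T_{\hh}\vv\|^2 = \|T\vv_1\|^2 + \|T\vv_2\|^2$, which gives $\|T_{\hh}\|\leq\|T\|$; restricting to vectors with $\vv_2 = 0$ gives the reverse inequality. For the adjoint, a straightforward substitution into the definition of the scalar product gives
\[
\langle T_{\hh}\vv,\vu\rangle_{\EHil{\hh}{\hil}} = \langle T\vv_1,\vu_1\rangle + \overline{\langle T\vv_2,\vu_2\rangle} + \bigl(\langle T\vv_1,\vu_2\rangle - \overline{\langle T\vv_2,\vu_1\rangle}\bigr)\uJ,
\]
and applying $\langle T\cdot,\cdot\rangle_{\hil} = \langle\cdot,T^*\cdot\rangle_{\hil}$ in each of the four terms reassembles this as $\langle\vv,(T^*)_{\hh}\vu\rangle_{\EHil{\hh}{\hil}}$, hence $(T_{\hh})^* = (T^*)_{\hh}$. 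The equivalences for (anti-)selfadjointness, normality and unitarity then follow immediately, since each such property is an algebraic relation between $T$ and $T^*$ that the extension map $T\mapsto T_{\hh}$ preserves (it is clearly multiplicative and additive on compatible domains).

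The main technical hurdle is the bookkeeping needed for sesquilinearity and $(T_{\hh})^* = (T^*)_{\hh}$: the defining formula for $\langle\cdot,\cdot\rangle_{\EHil{\hh}{\hil}}$ mixes complex inner products of components with complex conjugates and $\uJ$-factors, and one must repeatedly apply $\uJ z = \overline{z}\,\uJ$ together with the Hermitian symmetry of $\langle\cdot,\cdot\rangle_{\hil}$ in the correct order to avoid sign errors. Once a short auxiliary lemma records these manipulations, the remainder of the argument is merely a repackaging of standard complex Hilbert space facts through the decomposition $\vv = \vv_1 + \vv_2\uJ$.
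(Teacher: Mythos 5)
Your argument is correct, and it is the natural one: the paper states this theorem without proof (it is a preliminary result imported from Sharma's construction), so there is nothing in the source to compare against, but the componentwise verification via the decomposition $\vv = \vv_1 + \vv_2\uJ$, the anti-commutation rule $\uJ z = \overline{z}\uJ$ for $z\in\cc_{\uI}$, and the Pythagorean identity $\|\vv\|^2 = \|\vv_1\|^2 + \|\vv_2\|^2$ is exactly what is intended. One small point deserves attention since the theorem allows unbounded $T$: the substitution you describe shows $(T^*)_{\hh}\subseteq (T_{\hh})^*$, i.e.\ that $(T^*)_{\hh}$ acts correctly on its domain, but the equality of adjoints also requires the reverse domain inclusion. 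This follows by testing an arbitrary $\vu\in\dom((T_{\hh})^*)$ against vectors $\vv = \vv_1 + 0\cdot\uJ$ with $\vv_1\in\dom(T)$: comparing the $1$- and $\uJ$-components of $\langle T_{\hh}\vv,\vu\rangle = \langle\vv,(T_{\hh})^*\vu\rangle$ forces both components $\vu_1,\vu_2$ to lie in $\dom(T^*)$, so $\dom((T_{\hh})^*)\subseteq\dom(T^*)+\dom(T^*)\uJ$. With that one-line addition the proof is complete.
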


External complexification or quaternionification happens by enlarging the underlying vector space and defining a complex or quaternionic structure on the enlarged space. This is always possible. A different strategy is {\em internal complexification} resp. {\em quaternionification}, which happens by defining a complex resp. quaternionic linear structure on the existing space.

Let $\hil$ be a real Hilbert space and let $\JJ$ be a unitary and anti-selfadjoint operator on $\hil$, that is $\JJ^* = \JJ^{-1} = -\JJ$. Then $\JJ^2 = - \id$ and hence we can define the $\JJ$-induced multiplication with complex scalars on $\hil$ as
\[
(a_0 + i a_1) \vv := a_0 \vv + a_1 \JJ \vv \qquad \forall a = a_0 + i a_1\in\cc, \vv\in\hil
\]
and the $\JJ$-induced complex scalar product on $\hil$ as
\[
\langle \vv,\vu\rangle_{\JJ} := \langle\vv,\vu\rangle  - i \langle \vv,\JJ\vu\rangle.
\]
Since 
\[
\langle\vv,\JJ\vv\rangle = \langle\JJ^*\vv,\vv\rangle = - \langle\JJ\vv,\vv\rangle = - \langle\vv,\JJ\vv\rangle,
\]
$\vv$ and $\JJ\vv$ are orthogonal in $\hil$ for any $\vv\in\hil$ and so the norm induced by $\langle\cdot,\cdot\rangle_{\JJ}$ is the norm induced by $\langle\cdot,\cdot\rangle$. 
\begin{definition}
We call the complex Hilbert space $\hil_{\JJ}:=(\hil,\langle\cdot,\cdot\rangle_{\JJ})$ the internal complexification of $\hil$ that is induced by $\JJ$.
\end{definition}
\begin{theorem}\label{IntComplex}
Let $\hil$ be a real Hilbert space and let $\JJ$ be an anti-selfadjoint and unitary operator on $\hil$.
\begin{enumerate}[(i)]
\item The space $\hil_{\JJ}$ is a complex Hilbert space, the dimension of which is half of the dimension of $\hil$, and a subset of $(\vv_n)_{n\in\varLambda}$ of $\hil$ is an orthonormal basis of $\hil_{\JJ}$ if and only if $( \vv_n)_{n\in\varLambda}\cup(\JJ\vv_n)_{n\in\varLambda}$ is an orthonormal basis of $\hil$. (In particular this implies that $\hil$ has even dimension if its dimension is finite.)
\item An $\rr$-linear operator $T:\dom(T)\subset\hil \to \hil$ is complex linear with respect to the $\JJ$-induced structure if and only if $T$ commutes with $\JJ$. Such operator is bounded as an operator on $\hil$ if and only if it is bounded as an operator on $\hil_{\JJ}$ and in this case $\|T\|_{\boundOP(\hil)} = \|T\|_{\boundOP(\hil_{\JJ})}$. Moreover the adjoint $T^*$ of $T$ on $\hil$ is also the adjoint of $T$ on $\hil_{J}$ and hence $T$ is (anti-)selfadjoint, normal or unitary on $\hil$ if and only if it is bounded, (anti-)selfadjoint, normal or unitary on $\hil_{\JJ}$.
\end{enumerate}
\end{theorem}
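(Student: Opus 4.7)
The plan is to verify (i) by checking the complex Hilbert space axioms directly from the definitions and then translating the basis correspondence through the real and imaginary parts of $\langle\cdot,\cdot\rangle_\JJ$, and to prove (ii) by reducing $\cc$-linearity to commutation with $\JJ$ and using the explicit form of the scalar product to transfer the adjoint.

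For the Hilbert space structure in (i), the scalar multiplication is a genuine action of $\cc$ because $i(i\vv)=\JJ^2\vv=-\vv$ by the assumption $\JJ^2=-\id$. Sesquilinearity of $\langle\cdot,\cdot\rangle_\JJ$ is a short computation: in the second argument, $\langle\vv,i\vu\rangle_\JJ=\langle\vv,\JJ\vu\rangle-i\langle\vv,\JJ^2\vu\rangle=\langle\vv,\JJ\vu\rangle+i\langle\vv,\vu\rangle=i\langle\vv,\vu\rangle_\JJ$, and conjugate linearity in the first argument follows from $\JJ^*=-\JJ$. The observation recorded just before the theorem that $\langle\vv,\JJ\vv\rangle=0$ yields $\langle\vv,\vv\rangle_\JJ=\langle\vv,\vv\rangle$, which gives positive-definiteness and the equality $\|\cdot\|_\JJ=\|\cdot\|$, from which completeness of $\hil_\JJ$ is inherited. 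For the basis claim, if $(\vv_n)$ is orthonormal in $\hil_\JJ$, decomposing $\delta_{nm}=\langle\vv_n,\vv_m\rangle-i\langle\vv_n,\JJ\vv_m\rangle$ into real and imaginary parts gives $\langle\vv_n,\vv_m\rangle=\delta_{nm}$ and $\langle\vv_n,\JJ\vv_m\rangle=0$; together with $\langle\JJ\vv_n,\JJ\vv_m\rangle=\langle\vv_n,\JJ^*\JJ\vv_m\rangle=\delta_{nm}$ this shows that $(\vv_n)\cup(\JJ\vv_n)$ is orthonormal in $\hil$. Spanning transfers by writing any $\hil_\JJ$-expansion $\vv=\sum_n c_n\vv_n$ with $c_n=a_n+ib_n$ in the form $\vv=\sum_n(a_n\vv_n+b_n\JJ\vv_n)$ in $\hil$, and the converse direction is analogous; the dimension count is then immediate.

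For (ii), since $i\vv=\JJ\vv$ in the $\JJ$-induced structure, $T$ is $\cc$-linear precisely when $T\JJ=\JJ T$ on $\dom(T)$; this identity also forces $\JJ\dom(T)=\dom(T)$, so $\dom(T)$ is automatically a complex subspace of $\hil_\JJ$. Boundedness and the norm identity are immediate from $\|\cdot\|_\JJ=\|\cdot\|$. For the adjoint, I would compute $\langle T^*\vu,\vv\rangle_\JJ=\langle T^*\vu,\vv\rangle-i\langle T^*\vu,\JJ\vv\rangle=\langle\vu,T\vv\rangle-i\langle\vu,T\JJ\vv\rangle=\langle\vu,T\vv\rangle_\JJ$, where the last step uses $T\JJ=\JJ T$; together with the fact that $T^*\JJ=\JJ T^*$---obtained by taking $\hil$-adjoints of $T\JJ=\JJ T$ and using $\JJ^*=-\JJ$---this shows that $\dom(T^*)$ is the same in either Hilbert space and that the real adjoint serves as the complex adjoint. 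Preservation of (anti-)selfadjointness, normality and unitarity then follows automatically. The main point requiring care, rather than any serious obstacle, is tracking the $\JJ$-invariance of domains when handling unbounded operators.
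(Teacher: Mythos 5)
Your proof is correct and complete. The paper states this theorem without proof, treating it as a standard preliminary fact about internal complexification, and your verification is precisely the standard argument it implicitly relies on---including the one genuinely delicate point for unbounded operators, namely the $\JJ$-invariance of $\dom(T)$ and $\dom(T^*)$ and the resulting coincidence of the real and complex adjoints.
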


Similarly, we can define an internal quaternionification of a real Hilbert space. If $\II$ and $\JJ$ are two anti-selfadjoint and unitary operators on $\hil$ with $\II\JJ = - \JJ \II$, then we can choose $\uI,\uJ\in\SS$ with $\uI\perp\uJ$ and set $\uK := \uI\uJ$. We can then define the multiplication of vectors in $\hil$ with a quaternionic scalar $a = a_0 + a_1\uI + a_2\uJ + a_3\uK\in\hil$ from the right as
\[
\vv a := a_0 \vv + a_1\II\vv + a_2\JJ\vv + a_3 \JJ\II\vv
\]
and, with the abbreviation $\Theta$ for the quadruple $(\II,\JJ,\uI,\uJ)$, a quaternionic scalar product as
\[
\langle\vv,\vu\rangle_{\Theta} := \langle\vv,\vu\rangle - \langle\vv, \II \vu\rangle\uI - \langle\vv, \JJ \vu\rangle\uJ - \langle\vv, \JJ\II \vu\rangle\uK.
\]
\begin{definition}
We call the quaternionic Hilbert space $\hil_{\theta} := (\hil,\langle\cdot,\cdot\rangle_{\Theta})$ the internal quaternionification of $\hil$ that is induced by the quadruple $\Theta =(\II,\JJ,\uI,\uJ)$.
\end{definition}
\begin{theorem}
Let $\hil$ be a real Hilbert space and let $\Theta = (\II,\JJ,\uI,\uJ)$ be a quadruple consisting of two anti-selfadjoint and unitary operators on $\hil$ that anticommute and two imaginary units $\uI,\uJ\in\SS$ with $\uI\perp\uJ$.
\begin{enumerate}[(i)]
\item The space $\hil_{\Theta}$ is a quaternionic (right) Hilbert space, the dimension of which is a quarter of the dimension of $\hil$ and a subset  $(\vv_n)_{n\in\varLambda}$ of $\hil$ is an orthonormal basis for $\hil_{\Theta}$ if and only if $( \vv_n,\II\vv_n,\JJ\vv_n,\JJ\II\vv_n)_{n\in\varLambda}$ is a an orthonormal basis for $\hil$. (In particular this implies that the dimension of $\hil$ is a multiple of four if its dimension is finite.)
\item An $\rr$-linear operator $T:\dom(T)\subset\hil \to \hil$ is quaternionic linear with respect to the $\Theta$-induced structure if and only if it commutes with $\II$ and $\JJ$. Such operator is bounded as an operator on $\hil$ if and only if it is bounded as an operator on $\hil_{\Theta}$ and in this case $\|T\|_{\boundOP(\hil)} = \|T\|_{\boundOP(\hil_{\Theta})}$. Moreover the adjoint $T^*$ of $T$ on $\hil$ is also the adjoint of $T$ on $\hil_{\Theta}$ and hence $T$ is (anti-)selfadjoint, normal or unitary on $\hil$ if and only if it is bounded, (anti-)selfadjoint, normal or unitary on $\hil_{\JJ}$.
\end{enumerate}
\end{theorem}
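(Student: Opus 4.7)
My plan is to model the argument on the proof sketch for the internal complexification (Theorem \ref{IntComplex}), where the single operator $\JJ$ encoded right-multiplication by $\uI$. Here the pair $(\II,\JJ)$ will encode right multiplication by $\uI$ and $\uJ$; by design $\JJ\II$ will implement $\uK=\uI\uJ$ because $\vv\uK=(\vv\uI)\uJ=(\II\vv)\uJ=\JJ\II\vv$. The verification splits into: (a) checking that $\vv a$ defines a right $\hh$-action; (b) checking that $\langle\cdot,\cdot\rangle_{\Theta}$ is a quaternionic Hermitian right-linear positive-definite form; (c) completeness; (d) the basis statement; and (e) the operator statements. Steps (a) and (c) are almost immediate: the relations $\II^2=\JJ^2=-\id$, $\II\JJ=-\JJ\II$, together with $(\JJ\II)^2=-\JJ\II\JJ\II=\JJ\JJ\II\II=-\id$, give exactly the quaternionic relations, so $a\mapsto (\vv\mapsto\vv a)$ extends to a right module action, and completeness is inherited because the scalar product will induce the same norm as the one on $\hil$.

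For (b) I would first show Hermitian symmetry $\langle\vu,\vv\rangle_{\Theta}=\overline{\langle\vv,\vu\rangle_{\Theta}}$ by using that $\II$, $\JJ$, and $\JJ\II$ are all anti-selfadjoint on $\hil$ (the last since $(\JJ\II)^*=\II^*\JJ^*=\II\JJ=-\JJ\II$), so moving these operators across the inner product introduces a sign that cancels the minus in the definition of $\langle\cdot,\cdot\rangle_{\Theta}$. Positive definiteness follows from $\langle\vv,\II\vv\rangle=\langle\vv,\JJ\vv\rangle=\langle\vv,\JJ\II\vv\rangle=0$ (each is a vector paired with an anti-selfadjoint operator applied to it), so $\langle\vv,\vv\rangle_{\Theta}=\langle\vv,\vv\rangle_{\hil}\ge0$ with equality only for $\vv=0$. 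This also proves that the norms coincide, settling (c). Right $\hh$-linearity $\langle\vv,\vu a\rangle_{\Theta}=\langle\vv,\vu\rangle_{\Theta}\,a$ is the tedious step: by $\rr$-linearity it suffices to check $a\in\{\uI,\uJ\}$, and both cases reduce to the quaternion multiplication table $\uI\uJ=\uK$, $\uJ\uI=-\uK$, $\uK\uI=\uJ$, $\uJ\uK=\uI$, combined with $\II^2=\JJ^2=-\id$ and the anticommutation of $\II$ and $\JJ$. This algebraic bookkeeping is the main obstacle—not conceptually difficult, but requiring care in tracking signs.

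For (i) the basis statement, if $(\vv_n)_{n\in\varLambda}$ is orthonormal in $\hil_{\Theta}$, the four cross-terms $\langle\vv_n,\vv_m\rangle$, $\langle\vv_n,\II\vv_m\rangle$, $\langle\vv_n,\JJ\vv_m\rangle$, $\langle\vv_n,\JJ\II\vv_m\rangle$ read off the four real components of $\langle\vv_n,\vv_m\rangle_{\Theta}=\delta_{nm}$, giving orthonormality of $(\vv_n)$ with $\II\vv_m$, $\JJ\vv_m$, $\JJ\II\vv_m$ in $\hil$. The remaining orthonormality relations among $\II\vv_n$, $\JJ\vv_n$, $\JJ\II\vv_n$ follow by moving the (anti-)selfadjoint operators across $\langle\cdot,\cdot\rangle$ and using $\II\JJ=-\JJ\II$, $\II^2=\JJ^2=-\id$. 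Completeness of the enlarged system in $\hil$ follows from the expansion $\vw=\sum_n\vv_n\langle\vv_n,\vw\rangle_{\Theta}$ in $\hil_{\Theta}$, whose right-hand side, when the quaternion coefficients $\langle\vv_n,\vw\rangle_{\Theta}=c_{n,0}+c_{n,1}\uI+c_{n,2}\uJ+c_{n,3}\uK$ are expanded through the action, gives $\vw=\sum_n(c_{n,0}\vv_n+c_{n,1}\II\vv_n+c_{n,2}\JJ\vv_n+c_{n,3}\JJ\II\vv_n)$. The converse direction reverses this reasoning, and the dimension-by-four statement is an immediate consequence.

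For (ii), an $\rr$-linear $T$ is quaternionic-linear on $\hil_{\Theta}$ iff $T(\vv a)=T(\vv)a$ for all $a\in\hh$, which by $\rr$-linearity reduces to the two generators $a=\uI,\uJ$, i.e.\ $T\II=\II T$ and $T\JJ=\JJ T$. The norm equality is immediate since the underlying norms coincide. For the adjoint, the commutation relations $T\II=\II T$ and $T\JJ=\JJ T$ imply $T^*\II=\II T^*$ and $T^*\JJ=\JJ T^*$ (taking adjoints and using $\II^*=-\II$, $\JJ^*=-\JJ$), so that the expansion of $\langle T\vv,\vu\rangle_{\Theta}$ term by term moves $T$ across each inner product producing $\langle\vv,T^*\vu\rangle_{\Theta}$; this identifies the $\hil_{\Theta}$-adjoint with the $\hil$-adjoint $T^*$ and gives the transfer of (anti-)selfadjointness, normality, and unitarity. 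No step here is conceptually novel once (i) is in place—the whole proof is, in spirit, the internal-complexification argument of Theorem \ref{IntComplex} carried out with two anticommuting complex structures instead of one.
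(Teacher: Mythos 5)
The paper states this theorem without proof --- it is recalled as a known preliminary result (following Sharma's work on internal quaternionification) --- so there is no proof of record to compare against. Your argument is correct and is the natural verification: the key points (that $\II$, $\JJ$, $\JJ\II$ realise right multiplication by $\uI$, $\uJ$, $\uK$ as an anti-homomorphism, that anti-selfadjointness of these three operators yields Hermitian symmetry and the vanishing of $\langle\vv,\II\vv\rangle$, $\langle\vv,\JJ\vv\rangle$, $\langle\vv,\JJ\II\vv\rangle$ and hence the coincidence of norms, and the reduction of quaternionic linearity and of the adjoint computation to commutation with the two generators) are all in place and correctly justified. One cosmetic slip: in your computation of $(\JJ\II)^2$ the signs are misplaced ($(\JJ\II)^2=\JJ\II\JJ\II=-\JJ^2\II^2=-\id$ is the intended chain), but the conclusion $-\id$ is right and nothing downstream depends on the misprint.
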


We conclude this section with the discussion how quaternionic resp. complex Hilbert spaces can be considered as quaternionification or complexifications of their subspaces. 

We start with a complex Hilbert space $\hil $. A conjugation $K$ on $\hil $ is an antilinear and norm-preserving mapping from $\hil $ into itself such that $K\circ K  = \id$. Given a conjugation, we can define $\hil_{K}:= (\id + K)(\hil)$. We find that $\hil_{K}$ is an $\rr$-linear subspace of $\hil $ that is even a real Hilbert space with the structure that it inherits from $\hil $ and that furthermore $\EHil{\cc}{\hil _{K}} = \hil_{K}\otimes\cc \cong \hil $. A complex linear operator $T$ on $\hil $ is then the complex linear extension of an operator on $\hil $ if and only if it commutes with $K$, that is if and only if $T\circ K = K\circ T$.

A conjugation exists in any complex Hilbert space. We can for instance choose an orthogonal basis $(\vb_n)_{n\in\varLambda}$ of $\hil $ and define 
\begin{equation}\label{ConjInduc}
K(\vv) = \sum_{n\in\varLambda}\overline{\langle\vb_n,\vv\rangle_{\hil }}\vb_n.
\end{equation}
The subspace $\hil_{K}$ is then precisely the $\rr$-linear span of $(\vb_{n})_{n\in\varLambda}$. Conversely, if we start from a conjugation $K$, then any orthonormal basis of $\hil_{K}$ induces the conjugation $K$ via \eqref{ConjInduc}.

Let now $\hil$ be a quaternionic Hilbert space. If $\JJ$ is a unitary and anti-selfadjoint operator on $\hil$, then we can choose $\uI\in\SS$ and define 
\[
\hil_{\JJ,\uI}^{+} := \{ \vv\in\hil: \JJ\vv = \vv\uI\}\quad\text{and}\quad \hil_{\JJ,\uI}^{-} := \{ \vv\in\hil: \JJ\vv = \vv(-\uI)\}.
\]
The sets $\hil_{\JJ,\uI}^{+}$ and $\hil_{\JJ,\uI}^{-}$ are $\cc_{\uI}$-complex Hilbert spaces with the operations and the scalar product they inherit from $\hil$. Furthermore 
\[
\hil = \hil_{\JJ,\uI}^{+}\oplus \hil_{\JJ,\uI}^{-} = \hil_{\JJ,\uI}^{+}\oplus \hil_{\JJ,\uI}^{+}\uJ
\] 
and so $\EHil{\hh}{\hil_{\JJ,\uI}^{+}} \cong \hil$. An operator $T$ on $\hil$ is the quaternionic linear extension of a $\cc_{\uI}$-linear operator on $\hil_{\JJ,\uI}^{+}$ if and only if $T$ and $\JJ$ commute.

Finally, if $\II,\JJ$ are two anti-selfadjoint and unitary operators on $\hil$ with $\II\JJ = -\JJ\II$, then we can choose $\uI,\uJ\in\SS$ with $\uI\perp\uJ$ and define 
\[
\hil_{\rr} = \{ \vv\in\hil: \II \vv = \uI, \JJ\vv = \vv\uJ\}
\]
and find that $\hil_{\rr}$ is a real Hilbert space such that $\EHil{\hh}{\hil_{\rr}} = \hil$.  An operator $T$ on $\hil$ is the quaternionic linear extension of an $\rr$-linear operator on $\hil_{\rr}$ if and only if $T$ commutes with $\II$ and $\JJ$.

If we consider the left multiplication $\mathcal{L}$ that is generated on $\hil$ by interpreting $\II$ and $\JJ$ as the multiplication with a $\uI$ and $\uJ$, respectively, then 
 $\hil_{\rr}$ is the real Hilbert space of all vectors that commute with any quaternionic scalar and any orthogonal basis $(\vb_{\ell})_{\ell\in\varLambda}$ of $\hil_{\rr}$ generates the left scalar multiplication via
 \[
 a\vv = \sum_{\ell\in\varLambda}\vb_{\ell}\langle\vb_{\ell},\vv\rangle_{\hil}.
 \]
 
 Observe how defining a left multiplication on a quaternionic Hilbert space is the analogue of defining a conjugation on a complex Hilbert space. They both determine a subspace that serves for writing each vector in terms of components in an $\rr$-linear subspace, which is similar to writing the scalars in $\cc$ resp. $\hh$ in terms of their real components.

\section{A Conjecture About the Equivalence of Complex and Quaternionic Quantum Systems }\label{ConjectureSect}
An experimental proposition about a physical system is the statement that the outcome of an experiment belongs to a certain subset of all possible outcomes. The set of all such experimental propositions and their relations determine the logical structure of the system, which is called its propositional calculus. The propositional calculus of a classical mechanical system has the structure of a Boolean algebra. The propositional calculus of a quantum mechanical system on the other hand has a different structure. The distributive identity, which is valid in Boolean algebras, cannot hold in this setting due to the existence of incompatible observables, which cannot be observed simultaneously. 

Birkhoff and von Neumann argued in \cite{Birkhoff:1936} based on some very plausible physical assumptions that the propositional calculus of a quantum mechanical system carries instead the structure of an orthomodular lattice, which initiated the research interest in the field of quantum logics.
\begin{definition}
A partially ordered system $(L,<)$ is called a lattice if for any $x,y\in L$ there exists
\begin{itemize}
\item a meet $x\wedge  y$ such that $x\wedge  y< x$ and $x\wedge  y < y$ and such that $z< x$ and $z< y$ implies $z< x\wedge  y$ and
\item a join $x\vee y$ such that $x< x\vee y$ and $y< x\vee y$ and such that $x< z$ and $y< z$ implies $x\vee y< z$. 
\end{itemize}
A lattice is called bounded if it has a least element $0$ and greatest element $1$ such that $0< x$ and $x< 1$ for all $x\in L$  and a bounded lattice is called orthocomplemented if every element $x\in L$ has a unique orthocomplement $\neg x$ such that 
\[
\neg(\neg x) = x\qquad x\wedge  \neg x = 0,\qquad x\vee \neg x = 1
\]
and such that 
\[
x< y\quad \text{implies}\quad\neg y < \neg x.
\]
A lattice $L$ is called complete if every subset $A\subset L$ has a greatest lower bound $\bigwedge A$ and a least upper bound $\bigvee A$ and it is called $\sigma$-complete if this holds true for any countable subset $A$ of $L$.  Finally, an orthocomplemented lattice is called modular, if it satisfies for all $x,y\in L$ the orthomodular law
\begin{equation}\label{OrthoMod}
\text{if}\quad x< y,\quad\text{then}\quad y = x\vee(\neg x \wedge  y).
\end{equation}
\end{definition}
\begin{remark}
Birkhoff and von Neumann did actually not arrive at an orthomodular lattice, but at a orthocomplement lattice in which the modular identity 
\[
\text{if}\quad x< z,\quad \text{then} \quad x\vee(y\wedge  z) = (x\vee y)\wedge  z
\]
holds. The weaker form \eqref{OrthoMod} is however the version used today. In particular it is the one used in the paper \cite{Moretti:2017}, the argumentation of which we follow in this section.
\end{remark}
Birkhoff and von Neumann showed that such orthomodular lattice can be realised as a lattice of closed subspaces of a Hilbert space over the real numbers, the complex numbers or over the quaternions \cite{Birkhoff:1936}. The relation $<$ corresponds then to the usual subset relation, the operation $\wedge $ to the intersection and the operation $\vee$ to the closed sum of two subspaces and the orthocomplement $\neg$ corresponds to taking the orthogonal complement of a subspace. Equivalently, we can also consider the lattice of orthogonal projections onto these subspaces instead of the subspaces themselves.

The possibility of formulating quantum mechanics on a real Hilbert space was soon discarded due to the analysis in \cite{Stueckelberg:1960,Stueckelberg:1961}. In these papers, Stueckelberg argues that any such quantum system admits an internal complexification---otherwise Heisenberg's uncertainty principle cannot hold. There must exist an imaginary anti-selfadjoint operator $\JJ$ on the real Hilbert space $\hil$, that commutes with any observable and the unitary group that describes the time development of the system. Hence, observables are complex linear operators on the internal complexification $\hil_{\JJ}$ of $\hil$ that is induced by $\JJ$, cf. \Cref{IntComplex} so that one is actually dealing with a complex quantum system. (The analysis in \cite{Stueckelberg:1960,Stueckelberg:1961} is however not correct as \cite{Sharma:1987} showed. Nevertheless, it is still assumed that any real quantum system admits an internal complexification and a formally correct argument at least for elementary relativistic systems is given in \cite{Moretti:2017}.)

Quantum mechanics on a quaternionic Hilbert space $\hil$ on the other hand was developed by several authors starting with \cite{Finkelstein:1962} and it seemed that such formulation of quantum mechanics was not equivalent to the formulation on a complex Hilbert space \cite{Adler:1995}. However, as we shall see in the following, this seems to be a misconception. Instead, we argue that any quaternionic quantum system is the external quaternionification of a complex quantum system on a suitably chosen complex subspace of $\hil$ and that the belief that the two theories are inequivalent arose from a logical mistake that was made from the very beginning of quaternionic quantum mechanics.

Let us consider a quantum system on a quaternionic Hilbert space and let us assume that there exists a unitary and anti-selfadjoint operator $\JJ$ that commutes with every observable of the system and with the unitary semigroup $U(t),t\in\rr,$ that describes the time evolution. In this case, we can choose $\uI\in\SS$ and reduce the quaternionic quantum system to a $\cc_{\uI}$-complex quantum system on the complex subspace 
\[
\hil_{\JJ,\uI}^{+} = \{\vv\in\hil:\JJ\vv = \vv\uI\}.
\]
 Since all observable and all time translations $U(t)$ commute with $\JJ$, they are  quaternionic linear extensions of operators on $\hil_{\JJ,\uI}^{+}$. The spectral measures of observables also commute with the operators $\JJ$ because the observables themselves do. Hence, the range $K$ of such projection, which corresponds to an experimental proposition in the propositional calculus of the  system, is actually the  quaternionification $K = \EHil{\hh}{K_{\uI}} = K_{\uI} \oplus K_{\uI}\uJ$ with $\uJ\in\SS, \uJ\perp\uI$ of a closed complex linear subspaces on $K_{\uI}$ of $\hil_{\JJ,\uI}^{+}$. The projection itself is in turn the quaternionic linear extension of a projection on $\hil_{\JJ,\uI}^{+}$. In particular, this holds true for one-dimensional subspaces in the propositional calculus and which correspond to pure states of the system. Any such subspace $K_0$ is of the form $K_0 = K_{0,\uI}\oplus K_{0,\uI}\uJ$ with a one-dimensional subspace $K_{0,\uI}$ of $\hil_{\uJ}$. In other words
 \[
 K_0 = \linspan{\cc_{\uI}}(\vv) \oplus\linspan{\cc_{\uI}}(\vv)\uJ = \linspan{\hh}\{\vv\}
 \]
 with some $\vv \in K_{0,\uI}$ and hence any pure state of the system can be represented by a vector in $\hil_{\JJ,\uI}^{+}$. Finally, if we represent a state of the system by a vector $\vv\in\hil_{\JJ,\uI}^{+}$, then the time evolution of the system can be entirely described by vector in $\hil_{\JJ,\uI}^{+}$. The state of the system at time $t>0$ is given by $U(t)\vv$, which belongs again to $\hil_{\JJ,\uI}^{+}$ as 
\[
\JJ(U(t)\vv) = U(t)(\JJ\vv) = U(t)(\vv\uI) = (U(t)\vv)\uI.
\]
The quaternionic quantum system on $\hil$ is therefore simply the external quaternionification of a $\cc_{\uI}$-complex quantum system on $\hil_{\JJ,\uI}^{+}$, which contains all the physically relevant information. We conjecture that this relation is always true.
\begin{conjecture}\label{Conj}
Any quaternionic quantum system is the external quaternionification of a complex quantum system on a complex subspace of the underlying quaternionic Hilbert space.
\end{conjecture}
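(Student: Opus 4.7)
The plan is to reduce the general statement to producing a single structural ingredient: a unitary and anti-selfadjoint operator $\JJ$ on $\hil$ that commutes with every observable and with the unitary group $U(t)$ describing the time evolution. Indeed, the discussion immediately preceding the conjecture already constitutes a proof conditional on such a $\JJ$. Once $\JJ$ is in hand, I fix any $\uI\in\SS$ and form the $\cc_{\uI}$-complex Hilbert space $\hil_{\JJ,\uI}^{+}$. Using the decomposition $\hil = \hil_{\JJ,\uI}^{+}\oplus\hil_{\JJ,\uI}^{+}\uJ$ (with $\uJ\in\SS$, $\uI\perp\uJ$) established earlier, and the characterisation that an operator on $\hil$ is the quaternionic extension of an operator on $\hil_{\JJ,\uI}^{+}$ exactly when it commutes with $\JJ$, every observable, every spectral projection, and every $U(t)$ descends to $\hil_{\JJ,\uI}^{+}$. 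Pure states, being one-dimensional quaternionic subspaces of the form $\linspan{\hh}\{\vv\}$ with $\vv\in\hil_{\JJ,\uI}^{+}$, are likewise in bijection with rays of the complex subspace, so the full propositional calculus and dynamics are those of the $\cc_{\uI}$-complex quantum system on $\hil_{\JJ,\uI}^{+}$ whose external quaternionification is $\hil$.

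The first concrete step is therefore to argue the existence of $\JJ$. My plan is to mimic the Stueckelberg-type reasoning that, for the real case, motivates the existence of a complexifying $\JJ$ from Heisenberg's uncertainty principle and the canonical commutation relations: one picks a pair of conjugate observables $Q,P$ with $[Q,P]$ acting as multiplication by a scalar with a distinguished imaginary character, extracts a bounded anti-selfadjoint unitary $\JJ$ as the ``imaginary phase'' of this commutator, and then shows by irreducibility of the observable algebra that $\JJ$ lies in the center, so that it commutes with every observable. Because the Hamiltonian $H$ is itself observable, $\JJ$ commutes with $H$ and hence with $U(t)=\exp(-t\JJ H/\hbar)$ in the sense appropriate to the quaternionic spectral calculus, giving the compatibility with the time evolution.

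The hard part is precisely this existence step. As the paper itself recalls, Stueckelberg's original argument in the real case was shown to be incorrect, and a rigorous replacement is known only for elementary relativistic systems via the analysis of \cite{Moretti:2017}. A fully general proof would require an axiomatic framework in which ``quaternionic quantum system'' is defined precisely enough that a canonical central anti-selfadjoint unitary $\JJ$ can be extracted from the observable algebra and the symmetry group of the system. Two natural routes to pursue are: (a) a $C^{*}$-algebraic approach, showing that the center of the von Neumann algebra generated by the observables contains a distinguished element squaring to $-\id$; or (b) a symmetry-theoretic approach, constructing $\JJ$ as the infinitesimal generator of a central one-parameter group of automorphisms that is forced to exist by the physical symmetries of the system (the route taken in \cite{Moretti:2017} for the Poincar\'e group).

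The remaining steps, once $\JJ$ is available, are essentially bookkeeping and rely on results already stated in the excerpt. One verifies that every observable $A$ restricts to a $\cc_{\uI}$-linear operator $A_{\uI}$ on $\hil_{\JJ,\uI}^{+}$ with $\sigma(A_{\uI})\subset\sigma_{S}(A)\cap\cc_{\uI}$ and the same self-adjointness/normality properties; that the lattice of $\JJ$-invariant closed quaternionic subspaces of $\hil$ is isomorphic (via $K\mapsto K\cap\hil_{\JJ,\uI}^{+}$) to the lattice of closed $\cc_{\uI}$-complex subspaces of $\hil_{\JJ,\uI}^{+}$, thereby identifying the two propositional calculi as orthomodular lattices; and that the dynamics $U(t)$ and the state space are recovered by quaternionification in the sense of the external quaternionification theorem. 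These verifications are routine given the machinery of Section 1, so the entire difficulty of the conjecture is concentrated in producing $\JJ$ from first principles.
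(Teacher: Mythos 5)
You have correctly identified that the statement is a conjecture and that everything reduces to producing a unitary, anti-selfadjoint operator $\JJ$ on $\hil$ commuting with all observables and with the time evolution; the bookkeeping you describe once $\JJ$ is available (restriction to $\hil_{\JJ,\uI}^{+}$, identification of the propositional calculi, recovery of the dynamics by external quaternionification) is exactly the reduction carried out in \Cref{ConjectureSect}. The genuine gap is your existence step. Your primary route --- extracting $\JJ$ as the ``imaginary phase'' of a commutator $[Q,P]$ of conjugate observables and pushing it into the center by irreducibility --- is precisely Stueckelberg's argument, which the paper flags as incorrect (citing \cite{Sharma:1987}). Worse, in the quaternionic setting it cannot even be formulated: for a non-real quaternion $c$ the map $\vv\mapsto\vv c$ is not right-linear, so a right-linear operator that is ``multiplication by a scalar'' is necessarily $a\id$ with $a\in\rr$, hence selfadjoint; the relation $[Q,P]=c\,\id$ with $c$ imaginary therefore presupposes a privileged left multiplication, which is exactly the unphysical assumption the last section of the paper criticises. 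A general proof along these lines would fail.

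The partial result the paper actually proves (for quaternionic relativistic elementary systems with positive squared-mass operator, \Cref{CStructExist}) obtains $\JJ$ by a different mechanism, closer to your route (b) but with an algebraic ingredient you do not mention. One first shows (\Cref{VNT}) that the commutant $\VNAlg'$ of the irreducible von Neumann algebra of observables is a real normed division algebra, hence isomorphic to $\rr$, $\cc$ or $\hh$ by Urbanik's theorem; in the middle case $\JJ$ is simply the image of $i$ under that isomorphism. The physical input (positivity of $M_U^2$) enters only to exclude the cases $\VNAlg'\cong\rr$ and $\VNAlg'\cong\hh$, and even this is not done by a direct quaternionic argument: each excluded case is shown to make $\VNAlg$ equivalent to a real RES of real-real or real-quaternionic type, which contradicts \Cref{TMP1}. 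If you want to push the conjecture beyond the relativistic case, the commutant classification is the step to generalise, not the uncertainty-principle heuristic.
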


\section{Classification of Elementary Quantum Systems}
We cannot prove \Cref{Conj} for any arbitrary quantum systems, but we are able to show that it holds true at least for elementary relativistic quantum systems. We show this in \Cref{CStructExist} applying the arguments of \cite{Moretti:2017} where the equivalence of real and complex quantum theories are shown for such systems. We furthermore stress that also the equivalence of real and complex quantum mechanics is only known for this type of system because the argumentation of Stueckelberg is not correct as pointed out in~\cite{Sharma:1987a}.

In order to show the equivalence of complex and quaternionic quantum theory in this special case, we shall further formalise the ideas in \Cref{ConjectureSect}. We consider a quantum system and represent its propositional calculus by a lattice $\QMLat$ of orthogonal projections on a real, complex or quaternionic Hilbert space $\hil$. If $\QMLat$ is the lattice of all orthogonal projections on $\hil$, then we write $\QMLat(\hil)$. We recall several important concepts that are shared in all three settings with the aim of defining a proper notion of quantum system. (We follow the summary of results in \cite{Varadarajan:1985,Luders:2006} given in \cite{Moretti:2017} in order to prepare for the arguments in \Cref{CStructExist}). 

\begin{enumerate}[1)]
\item The orthogonal projections in $\QMLat$ are called {\em elementary observables}. Such elementary observables correspond to experimental propositions and they have only two outcomes: 1 if the proposition is true, 0 if it is wrong.

\item  Observables are modelled by possibly unbounded self-adjoint operators on $\hil$. Any self-adjoint operator $A$ on $\hil$ determines via the spectral theorem a unique spectral measure $E_{A}:\Borel(\rr)\to\boundOP(\hil)$ and conversely any such operator is uniquely determined by its spectral measure. Hence, we can consider the spectral measure $E_{A}$ instead of the operator $A$ itself and we henceforth call a spectral measure defined on the Borel sets $\Borel(\rr)$ of $\rr$, the values of which are orthogonal projections in $\QMLat$, an observable of the quantum system. If $A$ is an observable modelled by the spectral measure $E_{A}: \Borel(\rr) \to \QMLat$, then the interpretation of the elementary proposition $E_{A}(\Delta)$ with $\Delta\in\Borel(\rr)$ is that the outcome of the measurement of $A$ belongs to $\Delta$. 

Two observables are said to be {\em compatible} if they are made of mutually commuting orthogonal projections.

\item A {\em quantum state} is a $\sigma$-additive probability measure over the lattice $\QMLat$. More precisely, a quantum state is a map $\mu:\QMLat\to [0,1]$ such that $\mu(\id) = 1$ and such that for any sequence $(E_{\ell})_{\ell\in\nn}$  in $\QMLat$ with $E_{\ell}E_{\kappa} = 0$ for $\ell\neq \kappa$ one has
\[
\mu\left(s\textrm{-} \sum_{\ell\in\nn}E_{\ell} \right) = \sum_{\ell\in\nn}\mu(E_{\ell}),
\]
where $s\textrm{-}\sum_{\ell\in\nn}$ indicates that the series converges in the strong operator topology. The value of $\mu(E)$ is the probability that the outcome of measuring the proposition $E$ equals $1$ if the state of the system is $\mu$.

{\em Pure states} are extremal points in the convex set of probability measures and they are in one-to-one correspondence with one-dimensional rays in the Hilbert space. If $\vv$ is a unit vector in the ray associated with the pure state $\mu$, then $\mu(E) = \| E\mu \|^2$.

\item {\em L\"{u}ders-von Neumann's post measurement axiom} is in this setting formulated in the following way: If the outcome of the ideal measurement of $F \in\QMLat$ in the state $\mu$ is $1$, then the post measurement state is
\[
\mu_{F}(E):= \frac{\mu(F E F)}{\mu(F)},\qquad \forall E\in\QMLat.
\]
\item A {\em symmetry} is an automorphism $h: \QMLat \to \QMLat$ of the lattice of elementary propositions and we shall denote the set of all such automorphisms by $\aut(\QMLat)$. A subclass of symmetries are those induced by unitary (or in the complex case also anti-unitary) operators $U\in\boundOP(\hil)$ by means of $h_{U}(E) := UEU^{-1}$.

\item A {\em continuous symmetry} is a one-parameter group of lattice automorphisms $(h_s)_{s\in\rr}$ such that $s\mapsto \mu(h_s(E))$ is continuous for every $E\in\QMLat$ and every quantum state $\mu$. The time evolution of the system $(\tau_t)_{t\in\rr}$ is a preferred continuous symmetry.

\item A {\em dynamical symmetry} is a continuous symmetry $(h_s)_{s\in\rr}$ that commutes with the time evolution so that $h_{s}\circ\tau_t = \tau_t\circ h_{s}$ for $s,t\in\rr$. 

\end{enumerate}

Since computations with observables are meaningful, a quantum system should permit an algebraic structure. This structure is the one of an von Neumann algebra. 

\begin{definition}
A Banach algebra over $\mathbb{K} = \rr$ or $\mathbb{K} = \cc$ is a Banach space $(\mathcal{A},\|\cdot\|)$ over $\mathbb{K}$ endowed with a bilinear and associative product $\mathcal{A}\times\mathcal{A}\to\mathcal{A}$ such that
\[
\| \vx\vy \| \leq \|\vx\| \|\vy\|\qquad\forall \vx,\vy\in\mathcal{A}.
\] 

A $*$-algebra over $\mathbb{K} = \rr$ or $\mathbb{K} = \cc$ is a Banach algebra over $\mathbb{K}$ endowed with an involution $*: \mathcal{A}\to\mathcal{A}$ such that 
\[
(\vx^*)^* = \vx\quad (\vx\vy)^* = \vy^*\vx^*\quad (a\vx + b\vy)^* = \zeta(a)\vx^* + \zeta(b)\vy^*
\]
for all $\vx,\vy\in\mathcal{A}$ and all $a,b\in\mathbb{K}$, where $\zeta$ is the identity if $\mathbb{K} = \rr$ and the complex conjugation if $\mathbb{K}  = \cc$.

Finally, a $*$-algebra of $\mathbb{K}$ is called a $C^*$-algebra if in addition 
\[
\| \vx^*\vx\| = \| \vx\|^2
\]
and such $C^*$-algebra is called unital if it contains a neutral element $\ve$.
\end{definition}
It is well-known that the space of bounded operators on a complex Hilbert space forms together with the composition and the adjoint conjugation a complex unital $C^*$-algebra and similarly the space of bounded operators on a real Hilbert space forms together with the composition and the adjoint conjugation a real unital $C^*$-algebra. The space of bounded operators on a quaternionic Hilbert space however forms together with the composition and the adjoint conjugation again a real unital $C^*$-algebra and not a quaternionic one. Indeed, if $T$ is a quaternionic right linear operator, then the operator $Ta$ is supposed to act as $(Ta)\vv = T(a\vv)$, which is not meaningful since there is no natural multiplication with quaternionic scalars from the left defined on a quaternionic Hilbert space. Defining $(Ta)\vv := T(\vv a)$ on the other hand does only yield a quaternionic linear operator if $a$ is real so that the set of bounded right linear operators  $\boundOP(\hil)$ on  a quaternionic Hilbert space $\hil$ is only a real Banach space.

\begin{definition}
Let $\MM\subset\boundOP(\hil)$ be a set of bounded operators on a real, complex or quaternionic Hilbert space $\hil$. We define the commutant of $\MM$ as
\[
\MM' := \left\{ T\in\boundOP(\hil) : [T,A]:= TA - AT = 0\quad \text{for all $A\in\MM$}\right\}.
\]
\end{definition}
If $\MM$ is closed under the adjoint conjugation, then $\MM'$ is a $*$-algebra with unit. Since the product in $\boundOP(\hil)$ is continuous, $\MM'$ is closed in the uniform operator topology. Hence, if $\MM$ is closed under the adjoint conjugation, then $\MM'$ is a $C^*$-subalgebra in $\boundOP(\hil)$. One can furthermore easily show that $\MM'$ is closed in both the weak and the strong operator topology.

We furthermore have  $\MM\subset(\MM')' =:\MM''$ and $\MM_1'\subset\MM_2'$ if $\MM_1\supset \MM_2$ so that $\MM' = (\MM'')'$. We can therefore not reach beyond the second commutant by iteration. We recall the following important theorem due to von Neumann, the proof of which can be found in any book about operator algebras, cf. for instance Theorem~5.3.1 in \cite{Kadison:1997} (the proof is only formulated for the real or complex setting, but it also holds in the quaternionic one).
\begin{theorem}\label{VNThm}
Let $\hil$ be a real, complex  or quaternionic Hilbert space and let $\mathcal{A}$ be a unital $*$-sub-algebra of $\boundOP(\hil)$. The following statements are equivalent
\begin{enumerate}[(i)]
\item\label{VN1} $\mathcal{A} = \mathcal{A}''$.
\item\label{VN2} $\mathcal{A}$ is weakly closed.
\item\label{VN3} $\mathcal{A}$ is strongly closed.
\end{enumerate}
Hence, if $\mathcal{C}$ is a unital $*$-subalgebra of $\boundOP(\hil)$, then $\mathcal{C}'' = \clos[w]{\mathcal{C}} = \clos[s]{\mathcal{C}}$, where $ \clos[w]{\mathcal{C}}$ and $\clos[s]{\mathcal{C}}$ denote the closure with respect to the weak and strong operator topology, respectively. 
\end{theorem}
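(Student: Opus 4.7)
The plan is to establish the cycle $(i)\Rightarrow (ii)\Rightarrow (iii)\Rightarrow (i)$. The first two implications are routine. For any $\MM\subset\boundOP(\hil)$, the commutant $\MM'$ is weakly closed: if $A_\alpha \to A$ weakly and each $A_\alpha$ commutes with $B\in\MM$, then $\langle\vv,(AB-BA)\vu\rangle = \lim_\alpha \langle\vv,(A_\alpha B - BA_\alpha)\vu\rangle = 0$ for all $\vv,\vu\in\hil$, so $A\in\MM'$. Applying this to $\mathcal{A}'' = (\mathcal{A}')'$ yields $(i)\Rightarrow (ii)$, and $(ii)\Rightarrow (iii)$ is immediate because the strong operator topology refines the weak one.

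The substantial work lies in $(iii)\Rightarrow (i)$. Since $\mathcal{A}\subset \mathcal{A}''$ is automatic, I would show that strong closedness forces every $T\in\mathcal{A}''$ into $\mathcal{A}$ by strongly approximating $T$. First I treat a single vector $\vv\in\hil$ and set $K := \overline{\mathcal{A}\vv}$. Because $\mathcal{A}$ is $*$-closed, $K^\perp$ is also $\mathcal{A}$-invariant, so the orthogonal projection $P$ onto $K$ lies in $\mathcal{A}'$. Then $T\in (\mathcal{A}')'$ commutes with $P$ and hence preserves $K$; unitality of $\mathcal{A}$ gives $\vv\in K$, so $T\vv\in K=\overline{\mathcal{A}\vv}$ produces $A\in\mathcal{A}$ with $\|T\vv-A\vv\|<\varepsilon$. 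For finitely many vectors $\vv_1,\dots,\vv_n$ I would apply the standard amplification trick: form $\hil^{\oplus n}$ and the diagonal embedding $\Phi(A):=\diag(A,\dots,A)$. A direct matrix computation identifies $\Phi(\mathcal{A})'$ with the $n\times n$ matrices having entries in $\mathcal{A}'$, whence $\Phi(T)\in\Phi(\mathcal{A})''$ whenever $T\in\mathcal{A}''$. The single-vector argument applied to $\Phi(T)$ and $(\vv_1,\dots,\vv_n)\in\hil^{\oplus n}$ then delivers $A\in\mathcal{A}$ with $\sum_j\|(T-A)\vv_j\|^2<\varepsilon^2$, and strong closedness forces $T\in\mathcal{A}$. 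The final assertion $\mathcal{C}'' = \clos[w]{\mathcal{C}} = \clos[s]{\mathcal{C}}$ is then obtained by applying the equivalence to the smallest weakly (respectively strongly) closed unital $*$-algebra containing $\mathcal{C}$.

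The main obstacle, and the reason one must reinspect the argument in the quaternionic setting, is the matrix calculus for $\boundOP(\hil^{\oplus n})$. I would verify three things: that $\hil^{\oplus n}$ is a quaternionic right Hilbert space with the obvious sesquilinear form; that every closed subspace of a quaternionic Hilbert space admits an orthogonal projection (needed both on $\hil$, for the cyclic subspace $K$, and on $\hil^{\oplus n}$ in the amplified step); and that every bounded right quaternionic linear operator on $\hil^{\oplus n}$ corresponds bijectively to an $n\times n$ matrix over $\boundOP(\hil)$ with the expected product and adjoint. All three follow from the quaternionic Riesz representation theorem together with the canonical injections and projections of the direct sum, after which each step of the complex proof transports verbatim; no use is made of a left scalar multiplication on $\hil$, so the fact that $\boundOP(\hil)$ is merely a real algebra in the quaternionic case causes no difficulty.
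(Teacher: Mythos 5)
The paper itself offers no proof of this theorem: it cites Theorem~5.3.1 of Kadison--Ringrose and merely asserts that the argument ``also holds in the quaternionic one.'' You are supplying the standard complex double-commutant proof (weak closedness of commutants, the cyclic-subspace lemma, $n$-fold amplification), and in the real case it does transport verbatim. The gap lies in the quaternionic case, at exactly the step you dismiss as routine. You set $K:=\overline{\mathcal{A}\vv}$ and assert that the orthogonal projection $P$ onto $K$ lies in $\mathcal{A}'$. But $\mathcal{A}$ is only a \emph{real} algebra of right-linear operators (as the paper stresses, $\boundOP(\hil)$ admits no quaternionic scalar multiplication), so $\mathcal{A}\vv$ is only an $\rr$-linear subset of $\hil$: for $A\in\mathcal{A}$ and $q\in\hh$ one has $(A\vv)q=A(\vv q)\in\mathcal{A}(\vv q)$, which need not lie in $\mathcal{A}\vv$. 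Already for $\hil=\hh$ and $\mathcal{A}=\{r\id:r\in\rr\}$ one gets $\mathcal{A}\cdot 1=\rr$, which is not a right subspace. The orthogonal projection onto a closed real subspace that is not a right subspace is not right-linear, hence is not an element of $\boundOP(\hil)$, let alone of $\mathcal{A}'$, so you cannot conclude that $T\in\mathcal{A}''$ commutes with it. This is not a phantom objection: for a merely real unital $*$-subalgebra of a \emph{complex} $\boundOP(\hil)$ the conclusion of the theorem is actually false ($\mathcal{A}=\rr\id$ on $\hil=\cc$ is strongly closed, yet $\mathcal{A}''=\cc\id$), so some genuinely quaternionic input is unavoidable; it is precisely the mismatch between the scalars of the algebra and those of the space --- not the absence of a left multiplication --- that causes the difficulty.

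The repair is as follows. Take $K$ to be the closed right-$\hh$-linear span of $\mathcal{A}\vv$, i.e.\ the closure of $\mathcal{A}\vv+\mathcal{A}(\vv\uI)+\mathcal{A}(\vv\uJ)+\mathcal{A}(\vv\uK)$; this is a closed right subspace, still $\mathcal{A}$- and $\mathcal{A}^*$-invariant, so its projection does lie in $\mathcal{A}'$, and one obtains $\|T\vv-S\vv\|<\varepsilon$ with $S=B_0+B_1R_{\uI}+B_2R_{\uJ}+B_3R_{\uK}$, where $B_\ell\in\mathcal{A}$ and $R_q\vu:=\vu q$. Such an $S$ is not in $\mathcal{A}$, so a second step is needed: since $T$ is right-linear, $T=\frac14\sum_{q\in\{1,\uI,\uJ,\uK\}}R_q^{-1}TR_q$, whereas the same average applied to $S$ returns $B_0\in\mathcal{A}$, because $R_q^{-1}R_pR_q=R_{qpq^{-1}}$ and $\frac14\sum_{q}qpq^{-1}=\Re(p)$ for every $p\in\hh$. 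Running the approximation at the $4n$ vectors $\vv_jq$ and then averaging therefore produces an element of $\mathcal{A}$ within $\varepsilon$ of $T$ at each $\vv_j$, after which the rest of your argument (amplification, strong closedness, and the final identity $\mathcal{C}''=\clos[w]{\mathcal{C}}=\clos[s]{\mathcal{C}}$) goes through. The points you do flag --- existence of projections onto closed right subspaces and the matrix picture of $\boundOP(\hil^{\oplus n})$ --- are indeed unproblematic; the danger sits in the interaction of the right scalar multiplication with the cyclic subspace, which your ``transports verbatim'' claim passes over.
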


\begin{definition}
A {\em von Neumann algebra} $\VNAlg$ in the space $\boundOP(\hil)$ of bounded operators on a real, complex or quaternionic Hilbert space is a unital $*$-subalgebra of $\boundOP(\hil)$ that satisfies  the three equivalent conditions \cref{VN1,VN2,VN3} in \Cref{VNThm}. The {\em center} $\Center{\VNAlg}$ of $\VNAlg$ is the abelian von Neumann algebra $\Center{\VNAlg} : = \VNAlg\cap\VNAlg'$. 
\end{definition}
\begin{corollary}
If a set $\MM\subset\boundOP(\hil)$ of bounded operators on a real, complex or quaternionic Hilbert space is closed under the adjoint conjugation, then $\MM''$ is the smallest von Neumann-algebra that contains $\MM$. It is called the von Neumann algebra generated by $\MM$.
\end{corollary}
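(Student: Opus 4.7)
The plan is to establish the two claims in sequence: first that $\MM''$ is a von Neumann algebra, and second that any von Neumann algebra containing $\MM$ must also contain $\MM''$.

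For the first claim, the key preliminary observation is that $\MM'$ inherits closure under the adjoint from $\MM$. Indeed, if $T \in \MM'$ and $A \in \MM$, then $A^* \in \MM$ by hypothesis, so $T A^* = A^* T$, and taking adjoints yields $A T^* = T^* A$, i.e.\ $T^* \in \MM'$. Combined with the remarks in the text, this makes $\MM'$ a unital $*$-subalgebra of $\boundOP(\hil)$. Applying exactly the same reasoning one level up, $\MM'' = (\MM')'$ is itself a unital $*$-subalgebra. To finish the first claim I would then use the identity $\MM' = \MM'''$ derived in the text from $\MM \subset \MM''$ and the order-reversing property of the commutant: taking one more commutant gives $\MM'' = (\MM''')' = (\MM'')''$, so $\MM''$ satisfies condition \cref{VN1} of \Cref{VNThm}, hence is a von Neumann algebra.

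For the second claim, let $\VNAlg \subset \boundOP(\hil)$ be any von Neumann algebra with $\MM \subset \VNAlg$. Order-reversal of the commutant gives $\VNAlg' \subset \MM'$, and a second application yields $\MM'' \subset \VNAlg''$. But $\VNAlg$ is a von Neumann algebra, so $\VNAlg'' = \VNAlg$ by \Cref{VNThm}\cref{VN1}, which produces the desired inclusion $\MM'' \subset \VNAlg$. Thus $\MM''$ sits inside every von Neumann algebra containing $\MM$, establishing minimality.

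The entire argument is a formal manipulation of the commutant operation once the hereditary $*$-closure of $\MM'$ is in hand, and the only substantive input is \Cref{VNThm} itself, which the text cites and notes is valid equally in the real, complex, and quaternionic settings. Accordingly I expect no genuine obstacle; the only point that could trip up a careful reader is remembering that the standing hypothesis on $\MM$ is what allows the formal chain $\MM \subset \MM''$, $\MM''' \subset \MM'$, $\MM' \subset \MM'''$ to be combined into $\MM'' = \MM''''$ inside the category of unital $*$-subalgebras, rather than merely as arbitrary subsets of $\boundOP(\hil)$.
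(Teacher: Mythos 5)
Your proof is correct and follows essentially the same route the paper intends: the paper leaves the corollary without an explicit proof, but the preceding discussion establishes exactly the ingredients you use ($\MM'$ is a unital $*$-subalgebra when $\MM$ is adjoint-closed, $\MM\subset\MM''$, the order-reversal of the commutant, and $\MM'=\MM'''$), and \Cref{VNThm} then yields both that $\MM''$ is a von Neumann algebra and the minimality via $\MM''\subset\VNAlg''=\VNAlg$. No gaps.
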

We shall in the following mainly deal with von Neumann-algebras that are irreducible.
\begin{definition}
Let $\hil$ be a real, complex or quaternionic Hilbert space. A family of operators $\mathfrak{A}\subset\boundOP(U)$ is called reducible if there exists a non-trivial closed subspace $K\subset \hil$ such that $A(K)\subset K$ for all $A\in\mathfrak{A}$. The family $\mathfrak{A}$ is called irreducible, if it is not reducible.
\end{definition}
\begin{remark}
If $\mathfrak{A}$ is irreducible, then it is easy to see that 
\[
\{E\in\QMLat(\hil): [E,A] = 0 \quad \forall A\in\mathfrak{A}\} = \{0,\id\}.
\]
 The opposite implication holds true if $\mathcal{A}$ is closed under adjoint conjugation. In this case, we have for any closed subspace $K\subset \hil$ with $A(K)\subset K$ for all $A\in\mathcal{A}$ that $\langle A\vu, \vv\rangle = \langle\vu, A^*\vv\rangle = 0$ for $\vu\in K^{\perp}$ and $\vv\in K$. Hence, also $A(K^{\perp}) \subset K^{\perp}$ for all $A \in\mathcal{A}$. If $\mathfrak{A}$ is reducible, then we can find a nontrivial subspace $K$ and  the orthogonal projection onto $K$ does then belong to $\{E\in\QMLat(\hil): [E,A] = 0 \quad \forall A\in\mathfrak{A}\}$.
\end{remark}

The differences between von Neumann algebras on a quaternionic Hilbert space and von Neumann algebras on a complex Hilbert space are the same as the differences between von Neumann algebras on a real Hilbert space and von Neumann algebras on a complex Hilbert space stated in \cite[Theorem~2.29]{Moretti:2017}. (We do not recall the proof here, because it is the same for the quaternionic and the real case.)
\begin{theorem}\label{VNAlgThm}
Let $\VNAlg$ be a von Neumann  algebra over a real, complex or quaternionic Hilbert space $\hil$, let $\QMLat(\VNAlg)$ be the lattice of orthogonal projectors in $\VNAlg$ and let
\[
\mathfrak{J}(\VNAlg) := \left\{J \in \VNAlg: J^* = -J, -J^2\in\QMLat(\VNAlg)\right\}.
\]
\begin{enumerate}[(i)]
\item A bounded self-adjoint operator $A$ belongs to $\VNAlg$ if and only if the projections of the the spectral measure of $A$ belong to $\VNAlg$.
\item The set $\QMLat(\VNAlg)$ is a complete (in particular $\sigma$-complete) orthomodular sublattice of $\QMLat(\hil)$.
\item\label{VNAlgThm3} $\VNAlg$ is irreducible if and only if $\QMLat(\VNAlg') = \{0,\id\}$.
\item If $\hil$ is a real or quaternionic Hilbert space, then
\begin{enumerate}[(a)]
\item $\QMLat(\VNAlg)''$ contains all selfadjoint operators in $\VNAlg$.
\item $(\QMLat(\VNAlg)\cup\mathfrak{J}(\VNAlg))'' = \VNAlg$
\item $(\QMLat(\VNAlg))''\subsetneq \VNAlg$ if and only if there exists $J\in \mathfrak{J}(\VNAlg)\setminus\QMLat(\VNAlg)''$. 
\end{enumerate}
\item If $\hil$ is a complex Hilbert space, then $\QMLat(\VNAlg)'' = \VNAlg$. 
\end{enumerate}
\end{theorem}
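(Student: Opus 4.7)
The plan is to establish each part of the theorem by mirroring the proof of \cite[Theorem~2.29]{Moretti:2017} for the real case and checking that every ingredient it uses --- the double commutant theorem \Cref{VNThm}, the spectral theorem for bounded normal operators, and the strong-operator density of simple spectral sums --- transfers verbatim to the quaternionic setting. Since \Cref{VNThm} has already been stated in the quaternionic case, the substantive work is to verify that the functional-calculus arguments used in the real case do not rely on commutativity of scalars.

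For (i), given a self-adjoint $A\in\VNAlg$, I would invoke the quaternionic spectral theorem to approximate $A$ in the strong operator topology by simple functions of its spectral projections; because these projections lie in $\{A\}''\subset\VNAlg''=\VNAlg$, they belong to $\QMLat(\VNAlg)$. The converse follows from strong-closure of $\VNAlg$. Part (ii) is then a direct consequence, since the lattice operations $\wedge,\vee,\neg$ on projections can be realised as strong limits (e.g. $E\wedge F=s\textrm{-}\lim_n(EF)^n$), which preserve membership in the strongly closed set $\VNAlg$. Part (iii) follows from $*$-closure of $\VNAlg$: a closed subspace $K\subset\hil$ is $\VNAlg$-invariant if and only if $K^{\perp}$ also is, if and only if the orthogonal projection onto $K$ commutes with every element of $\VNAlg$, i.e.\ it lies in $\QMLat(\VNAlg')$.

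The heart of the theorem is parts (iv) and (v), which concern how much of $\VNAlg$ is generated by its projections. For (iv)(a), every self-adjoint $A\in\VNAlg$ is a strong limit of linear combinations of its spectral projections (which lie in $\QMLat(\VNAlg)$ by (i)), so $A\in\QMLat(\VNAlg)''$. In the complex case, every $T\in\VNAlg$ decomposes as $T=(T+T^*)/2+i(T-T^*)/(2i)$ with both summands self-adjoint in $\VNAlg$ and hence in $\QMLat(\VNAlg)''$; since $i$ is a scalar, multiplication by $i$ preserves the double commutant, yielding (v). In the real or quaternionic case this decomposition fails because $i$ is not a scalar. To repair it for (iv)(b), I would take any anti-selfadjoint $U\in\VNAlg$ and apply a polar-type decomposition $U=J\lvert U\rvert$ where $\lvert U\rvert=\sqrt{-U^2}$ is self-adjoint and $J$ is a partial isometry with $J^*=-J$ and $-J^2$ equal to the projection onto $\overline{\operatorname{ran}(U)}$, so that $J\in\mathfrak{J}(\VNAlg)$. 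Since every $T\in\VNAlg$ splits into its self-adjoint and anti-selfadjoint parts, (b) follows, and (c) is an immediate consequence of (a) and (b): $(\QMLat(\VNAlg))''=\VNAlg$ exactly when every $J\in\mathfrak{J}(\VNAlg)$ already lies in $(\QMLat(\VNAlg))''$.

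The main obstacle I expect is the polar decomposition step in (iv)(b): one must check that the partial isometry $J$ extracted from $U$ actually belongs to $\VNAlg$ (which requires applying Borel functional calculus within $\VNAlg$ rather than just within $\boundOP(\hil)$) and that $-J^2\in\QMLat(\VNAlg)$, so that $J$ is a genuine element of $\mathfrak{J}(\VNAlg)$. The quaternionic subtlety here is that the spectral calculus for the anti-selfadjoint operator $U$ produces projections attached to a spherical rather than a linear spectrum, so one needs to work with the $S$-spectrum and ensure that the Borel function used to define $J$ is slice-preserving so that the resulting operator is genuinely quaternionic right linear and lands inside $\VNAlg$.
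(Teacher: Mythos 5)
Your proposal is correct and takes essentially the same route as the paper, which gives no independent proof but simply defers to \cite[Theorem~2.29]{Moretti:2017} with the remark that the real-case argument carries over verbatim to the quaternionic setting; your sketch is precisely that carry-over, including the key step of splitting $T\in\VNAlg$ into self-adjoint and anti-selfadjoint parts and extracting from the latter a partial isometry $J\in\mathfrak{J}(\VNAlg)$ via polar decomposition. The caveats you flag (that $J$ lies in $\VNAlg$, that $-J^2\in\QMLat(\VNAlg)$, and that real-linear combinations of spectral projections suffice since $\boundOP(\hil)$ is only a real algebra in the quaternionic case) are exactly the points that need checking, and they all go through.
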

In order to be able to calculate with observables, it seems reasonable to assume that the set of observables of a quantum mechanical system is embedded in a von Neumann algebra. 
\begin{definition}
A real, complex or quaternionic quantum system  is a von Neumann algebra $\VNAlg$ on a real, complex resp. quaternionic Hilbert space $\hil$. 
\end{definition}
\begin{remark}
We call $\VNAlg$ also the von Neumann algebra of observables. The proper observables are precisely the self-adjoint operators whose spectral measures take values in $\VNAlg$ and the lattice of elementary propositions corresponds to the lattice of orthogonal projections in $\QMLat(\VNAlg)$.  If we consider a complex Hilbert space, then the von Neumann algebra $\VNAlg$ of observables is by \Cref{VNAlgThm} precisely the von Neumann algebra that is generated by the lattice $\QMLat(\VNAlg)$, which represents the propositional calculus of the system. 
\end{remark}

Moretti and Oppio argued in \cite{Moretti:2017} that the symmetry under the Poincar\'{e}-group defines on any elementary relativistic quantum system that is defined on a real Hilbert space an up to sign unique unitary and anti-selfadjoint operator $\JJ$. This operator induces an internal complexification of the real Hilbert space $\hil$ that turns the real quantum system into a complex quantum one. We show now that their arguments can also be used  to show that the symmetry under the Poincar\'{e}-group induces also on any elementary relativistic quantum system a complex structure so that the quaternionic quantum system turns out to be the external quaternionification of a complex elementary relativistic quantum system.

We first give a formal definition of the term {\em elementary quantum system} following the arguments of \cite[Section~5]{Moretti:2017} and show that such systems admit a classification that is analogue to the classification of real elementary quantum systems in Theorem~5.3 of \cite{Moretti:2017}. 

An elementary quantum system must not allow super-selection rules---otherwise we could work separately on the super-selection sectors. Mathematically, this condition is expressed by requiring that the center of $\QMLat(\VNAlg)$ is trivial. Furthermore, we assume that there does not exist any non-trivial orthogonal projection in $\VNAlg'$, that is $\VNAlg$ is irreducible. Such projection could be interpreted as an elementary observable of another external system, whereas we want to be the elementary system to be the entire system we are dealing with. Under the assumption that the center of $\QMLat(\VNAlg)$ is trivial, this condition can also be interpreted as the existence of a maximal set of compatible observables.
\begin{definition}
An elementary real, complex or quaternionic quantum system is an irreducible von Neumann algebra $\VNAlg$ on a separable real, complex resp. quaternionic Hilbert space $\hil$.
\end{definition}
\begin{remark}
A complex quantum system is irreducible if and only if one has $\VNAlg' = \{a\id: a\in\cc\}$ or equivalently if and only if $\VNAlg = \boundOP(\hil)$. Just as in the real case, this is not true in the quaternionic setting, cf. \cite[Remark~5.2]{Moretti:2017}.
\end{remark}
The essential tool we need for showing the equivalence of complex and quaternionic quantum systems, is a precise classification of irreducible von Neumann-algebras on a quaternionic Hilbert space. We first recall the corresponding result for irreducible von Neumann-algebras on a real Hilbert space \cite[Theorem~5.3]{Moretti:2017}.
\begin{theorem}\label{VNT-real}
If $\VNAlg$ is an irreducible von Neumann algebra on a real Hilbert space $\hil$, then precisely one of the following statements holds true.
\begin{enumerate}[(i)]
\item\label{VNT-real1} The commutant $\VNAlg'$ of $\VNAlg$ is isomorphic to the real numbers. Precisely, we have
\[
 \VNAlg' = \{ a\id: a\in\rr\}.
 \]
In this case
\[
\VNAlg  = \boundOP(\hil), \quad \Center{\VNAlg} = \{a\id: a\in\rr\} \quad\text{and}\quad \QMLat(\VNAlg) = \QMLat(\hil)
\]
and we call $\VNAlg$ of {\em real-real type}.
\item\label{VNT-real2} The commutant $\VNAlg'$ of $\VNAlg$ is isomorphic to the field of complex numbers. Precisely, we have
 \[
 \VNAlg' = \{ a\id + b\JJ: a,b\in\rr\},
 \]
 where $\JJ$ is an up to sign unique unitary and anti-selfadjoint operator on $\hil$. Any operator in $\VNAlg$ is complex linear on the internal complexification $\hil_{\JJ}$ of $\hil$ induced by $\JJ$ and we have
 \[
\VNAlg  \cong \boundOP\left(\hil_{\JJ}\right), \quad \Center{\VNAlg} = \{a\id + b\JJ : a,b\in\rr\} \quad\text{and}\quad \QMLat(\VNAlg) \cong \QMLat(\hil_{\JJ}).
\]
In this case, we call $\VNAlg$ of {\em real-complex-type}.
\item\label{VNT-real3} The commutant $\VNAlg'$ of $\VNAlg$ is isomorphic to the skew-field of quaternions. Precisely, we have
\[
\VNAlg' = \{ a\id + b \II + c \JJ + d\KK: a,b,c,d\in\rr\},
\]
 where $\II$, $\JJ$, and $\KK$ are mutually anti-commuting unitary and anti-selfadjoint operators on $\hil$ that do not belong to $\VNAlg$ such that $\II\JJ = \KK$. If we choose $\uI,\uJ\in\SS$ with $\uI\perp\uJ$ and set $\Theta = (\II,\JJ,\uI,\uJ)$, then any operator in $\VNAlg$ is quaternionic right linear on the internal quaternionification $\hil_{\Theta}$ of $\hil$ induced by $\Theta$. Moreover, we have
\[
\VNAlg  \cong \boundOP\left(\hil_{\Theta}\right), \quad \Center{\VNAlg} = \{a\id: a\in\rr\} \quad\text{and}\quad \QMLat(\VNAlg) \cong \QMLat(\hil_{\Theta})
\]
and we call $\VNAlg$ of real-quaternionic type.
\end{enumerate}
\end{theorem}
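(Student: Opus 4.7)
The heart of the argument is to show that $\VNAlg'$ is a real associative division algebra, so that a suitable version of Frobenius's theorem forces $\VNAlg' \cong \rr$, $\cc$, or $\hh$, yielding the three cases. The remaining work is to identify $\VNAlg$, its centre, and its projection lattice in each case using the double commutant theorem, \Cref{VNAlgThm}, and \Cref{IntComplex}.

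First I would establish that every self-adjoint $A \in \VNAlg'$ is a real multiple of $\id$: since $\VNAlg$ is irreducible, \Cref{VNAlgThm}\ref{VNAlgThm3} gives $\QMLat(\VNAlg') = \{0,\id\}$, and \Cref{VNAlgThm}(i) applied to $\VNAlg'$ places the spectral projections of $A$ inside $\VNAlg'$; these must all be trivial, so $A$ has one-point spectrum. Splitting $T \in \VNAlg'$ into self- and anti-self-adjoint parts thus yields $\VNAlg' = \rr\id + \VNAlg'_{\mathrm{asa}}$. For $B \in \VNAlg'_{\mathrm{asa}}$, the operator $-B^2$ is positive self-adjoint in $\VNAlg'$, hence equals $c^2\id$ with $c \geq 0$; polarising produces a symmetric positive semi-definite bilinear form $\beta$ on $\VNAlg'_{\mathrm{asa}}$ with
\[
B_1 B_2 + B_2 B_1 = -2\beta(B_1,B_2)\,\id,
\]
while the commutator $[B_1,B_2]$ is visibly anti-self-adjoint, so $B_1 B_2 \in \VNAlg'$. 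Closure under multiplication is thereby verified, and for nonzero $T = a\id + B$ the identity $T(a\id - B) = (a^2 + c^2)\id$ produces an explicit inverse in $\VNAlg'$. Hence $\VNAlg'$ is a real Banach division algebra, and by the real Gelfand--Mazur/Frobenius theorem it is isomorphic to $\rr$, $\cc$, or $\hh$.

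These three isomorphism classes correspond to $\dim \VNAlg'_{\mathrm{asa}} \in \{0,1,3\}$, giving exactly cases \ref{VNT-real1}--\ref{VNT-real3}. In case \ref{VNT-real1} one has $\VNAlg = \VNAlg'' = (\rr\id)' = \boundOP(\hil)$. In case \ref{VNT-real2}, $\VNAlg'$ is abelian so $\VNAlg' \subset \VNAlg$, in particular $\JJ \in \VNAlg$; the double commutant gives $\VNAlg = \{\JJ\}'$, which by \Cref{IntComplex} is the algebra of bounded $\cc_{\uI}$-linear operators on $\hil_{\JJ}$, and $\Center{\VNAlg} = \VNAlg \cap \VNAlg' = \VNAlg'$. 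In case \ref{VNT-real3} I would $\beta$-orthonormalise $\VNAlg'_{\mathrm{asa}}$ to pick $\II,\JJ$; the product $\II\JJ$ is automatically anti-self-adjoint, squares to $-\id$, and is $\beta$-orthogonal to $\II$ and $\JJ$, so setting $\KK := \II\JJ$ yields the quaternionic relations. Since $\II$ and $\JJ$ do not commute, they lie in $\VNAlg'\setminus\VNAlg$, and the double commutant identifies $\VNAlg$ with $\{\II,\JJ\}'$, which via the quaternionic analogue of \Cref{IntComplex} is the algebra of bounded quaternionic right-linear operators on $\hil_{\Theta}$. The centre now reduces to $\rr\id$ because the only elements of $\VNAlg'$ commuting with both $\II$ and $\JJ$ are the scalars. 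In all three cases the lattice identifications for $\QMLat(\VNAlg)$ follow from the identification of $\VNAlg$ combined with \Cref{VNAlgThm}(i).

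The main obstacle is Step 2: establishing closure of $\VNAlg'$ under multiplication through the symmetric/antisymmetric splitting of products of anti-self-adjoint operators, and invoking the correct form of Frobenius's theorem for possibly infinite-dimensional real Banach division algebras rather than the finite-dimensional classical version. Once this is in place, the identification of $\VNAlg$, the centre, and the projection lattice in each case is essentially bookkeeping built on the double commutant theorem and the internal complexification/quaternionification machinery recalled earlier.
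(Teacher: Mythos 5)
Your proposal is correct. Note first that the paper does not actually prove \Cref{VNT-real}: it is recalled from Moretti--Oppio, and the only classification proof carried out in the paper is that of the quaternionic analogue, \Cref{VNT}, so that is the natural benchmark. You share the opening moves with that proof --- irreducibility forces $\QMLat(\VNAlg')=\{0,\id\}$, so the spectral measure of a self-adjoint element of $\VNAlg'$ is concentrated at a point and the element is a real scalar; then a general $T\in\VNAlg'$ splits into a scalar plus an anti-self-adjoint part $B$ with $-B^2$ a nonnegative scalar. The routes diverge at the trichotomy. The paper computes $\|T\vv\|^2=(a^2+b^2)\|\vv\|^2$ for $T=a\id+b\JJ$, deduces that the operator norm on $\VNAlg'$ is multiplicative, and invokes Urbanik's theorem that a unital normed real algebra with $\|ST\|=\|S\|\,\|T\|$ is $\rr$, $\cc$ or $\hh$. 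You instead exhibit $\VNAlg'$ as a division algebra via the explicit inverse $(a^2+c^2)^{-1}(a\id-B)$ and appeal to Mazur's theorem for real normed division algebras; in addition you polarise $-B^2=c^2\id$ into a positive-definite form $\beta$ on the anti-self-adjoint part with $B_1B_2+B_2B_1=-2\beta(B_1,B_2)\id$, which hands you the anticommuting orthonormal triple $\II,\JJ,\KK=\II\JJ$ for free in case \ref{VNT-real3}. Both black boxes are legitimate and of comparable depth; your form $\beta$ actually buys slightly more, since the Clifford-type relations bound the dimension of the anti-self-adjoint part by $3$ directly (a fourth $\beta$-orthonormal element would both commute and anticommute with the invertible $\II\JJ$), so one could even fall back on the finite-dimensional Frobenius theorem. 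The case-by-case bookkeeping --- double commutant, $\VNAlg'$ abelian hence $\VNAlg'\subset\VNAlg$ in case \ref{VNT-real2}, $\II,\JJ\notin\VNAlg$ because they anticommute in case \ref{VNT-real3}, and the lattice identifications via \Cref{IntComplex} and its quaternionic counterpart --- matches the paper's treatment of \Cref{VNT}. The only compressed spot is your ``one-point spectrum'' assertion, which the paper justifies with an explicit bisection argument on the spectral measure; that is a standard fact and not a gap.
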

An analogous result holds for irreducible von Neumann algebras on a quaternionic Hilbert space. In this case, we can however not introduce additional structure on $\hil$ by finding an {\em internal} complexification resp. quaternionification of $\hil$ so that $\VNAlg$ consists of all the linear operators on the more structured space. Instead, we can find a subspace with less structure, so that $\VNAlg$ is the {\em external} quaternionification of all bounded linear operators on this subspace.
\pagebreak[2]
\begin{theorem}\label{VNT}
If $\VNAlg$ is an irreducible von Neumann algebra on a quaternionic Hilbert space $\hil$, then precisely one of the following statements hold true.
\begin{enumerate}[(i)]
\item\label{VNT1} The commutant $\VNAlg'$ of $\VNAlg$ is isomorphic to the real numbers. Precisely, we have
\[
 \VNAlg' = \{ a\id: a\in\rr\}.
 \]
In this case
\[
\VNAlg  = \boundOP(\hil), \quad \Center{\VNAlg} = \{a\id: a\in\rr\} \quad\text{and}\quad \QMLat(\VNAlg) = \QMLat(\hil)
\]
and we call $\VNAlg$ {\em proper quaternionic}.
\item\label{VNT2} The commutant $\VNAlg'$ of $\VNAlg$ is isomorphic to the field of complex numbers. Precisely, we have 
 \[
 \VNAlg' = \{ a\id + b\JJ: a,b\in\rr\},
 \]
 where $\JJ$ is an up to sign unique unitary and anti-selfadjoint operator on $\hil$. If we choose $\uI\in\SS$, then $\hil_{\JJ,\uI}^{+} := \{ \vv\in\hil: \JJ\vv = \vv\uI\}$ is a complex Hilbert space over $\cc_{\uI}$ and 
\[
\VNAlg  \cong \boundOP\left(\hil_{\JJ,\uI}^{+}\right), \quad \Center{\VNAlg} = \{a\id + \JJ : a,b\in\rr\} \quad\text{and}\quad \QMLat(\VNAlg) \cong \QMLat(\hil_{\JJ,\uI}^{+}),
\]
where we identify an operator in $\boundOP\left(\hil_{J,\uI}^{+}\right)$ with its quaternionic linear extension to $\hil$. In this case, we call $\VNAlg$ {\em complex-induced}.

\item\label{VNT3} The commutant $\VNAlg'$ of $\VNAlg$ is isomorphic to the skew-field of quaternions. Precisely, we have
\[
\VNAlg' = \{ a\id + b \II + c \JJ + d\KK: a,b,c,d\in\rr\},
\]
 where $\II$, $\JJ$, and $\KK$ are mutually anti-commuting unitary and anti-selfadjoint operators on $\hil$ that do not belong to $\VNAlg$ such that $\II\JJ = \KK$. If we choose $\uI,\uJ\in\SS$ with $\uI\perp\uJ$ and set $\uK:= \uI\uJ$ , then $\hil_{\rr} : = \{\vv\in\hil: \II \vv = \vv\uI,\JJ\vv = \vv\uI\}$ is a real Hilbert space and 
\[
\VNAlg  \cong \boundOP\left(\hil_{\rr}\right), \quad \Center{\VNAlg} = \{a\id: a\in\rr\} \quad\text{and}\quad \QMLat(\VNAlg) \cong \QMLat(\hil_{\rr}),
\]
where we identify an operator in $\boundOP\left(\hil_{\rr}\right)$ with its quaternionic linear extension to~$\hil$. In this case, we call $\VNAlg$ {\em real-induced}.
\end{enumerate}
\end{theorem}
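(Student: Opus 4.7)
The plan is to mimic the strategy of the proof of \Cref{VNT-real}: first identify $\VNAlg'$ as a real Banach associative division algebra, next apply the Frobenius--Mazur--Gelfand theorem to conclude that $\VNAlg'$ is isometrically $*$-isomorphic to one of $\rr$, $\cc$, or $\hh$, and finally use the reduction results from the end of Section~1 to read off the stated descriptions of $\VNAlg$, $\Center{\VNAlg}$, and $\QMLat(\VNAlg)$ in each case.

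The first step is to show that every self-adjoint element of $\VNAlg'$ is a real multiple of the identity. Since $\VNAlg$ is irreducible and closed under adjoint conjugation, the remark following the definition of irreducibility yields $\QMLat(\VNAlg') = \{0,\id\}$. For a self-adjoint $A\in\VNAlg'$, the quaternionic spectral theorem (for instance the version of~\cite{Alpay:2016}) supplies a spectral measure whose projections lie in $\VNAlg'$ and are therefore trivial, so the spherical spectrum of $A$ collapses to a single real point and $A=\lambda\id$. Given any non-zero $T\in\VNAlg'$, the positive self-adjoint operator $T^*T\in\VNAlg'$ then equals $c\id$ with $c>0$, so $T^*/c$ is a two-sided inverse of $T$. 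Hence $\VNAlg'$ is a real Banach associative division algebra, and by the Frobenius--Mazur--Gelfand theorem it is isometrically $*$-isomorphic to $\rr$, $\cc$, or $\hh$.

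The next step is to extract explicit generators in cases~\cref{VNT2} and~\cref{VNT3}. If $B\in\VNAlg'$ satisfies $B^2=-\id$, applying $*$ yields $(B^*)^2=-\id$; moreover $B^*B\in\VNAlg'$ is self-adjoint, so it equals $c\id$ with $c>0$, giving $B^*=cB^{-1}=-cB$. Taking norms then forces $c=1$, so $B^*=-B$ and $B$ is unitary anti-selfadjoint. In case~\cref{VNT2} this produces the generator $\JJ$ uniquely up to sign, and in case~\cref{VNT3} any two orthogonal imaginary units $\II,\JJ$ of $\VNAlg'\cong\hh$ automatically anti-commute, with $\KK:=\II\JJ$ itself unitary anti-selfadjoint. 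The bicommutant theorem \Cref{VNThm} now gives $\VNAlg=\VNAlg''$, so the discussion at the end of Section~1 identifies $\VNAlg$ as: the full algebra $\boundOP(\hil)$ in case~\cref{VNT1}; the algebra of quaternionic linear extensions of $\cc_{\uI}$-linear operators on $\hil_{\JJ,\uI}^{+}$ in case~\cref{VNT2}, yielding $\VNAlg\cong\boundOP(\hil_{\JJ,\uI}^{+})$; and the algebra of quaternionic linear extensions of $\rr$-linear operators on $\hil_{\rr}$ in case~\cref{VNT3}, yielding $\VNAlg\cong\boundOP(\hil_{\rr})$. Since $\Center{\VNAlg}=\VNAlg\cap\VNAlg'=\Center{\VNAlg'}$, the centre equals $\VNAlg'$ in cases~\cref{VNT1} and~\cref{VNT2} (where $\VNAlg'$ is abelian) and $\rr\id$ in case~\cref{VNT3} (where $\VNAlg'\cong\hh$ has trivial centre); the same identity also precludes $\II,\JJ,\KK\in\VNAlg$ in case~\cref{VNT3}.

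I expect the main obstacle to be the first step and its dependence on the quaternionic spectral theorem: although the logic is formally identical to the complex setting, one has to verify that the spectral projections of a self-adjoint element of $\VNAlg'$ genuinely lie in $\VNAlg'$, which uses either norm-approximation by polynomials in the operator itself or the unbounded normal spectral theorem of~\cite{Alpay:2016}. Everything else closely parallels the proof of \Cref{VNT-real} in~\cite{Moretti:2017}, with the external-quaternionification identifications from the end of Section~1 playing the role that internal quaternionification played in the real-case analogue.
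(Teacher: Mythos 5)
Your proposal is correct and shares most of its skeleton with the paper's proof: both first show that every self-adjoint element of $\VNAlg'$ is a real multiple of $\id$ because its spectral projections lie in $\QMLat(\VNAlg')=\{0,\id\}$, both then establish the trichotomy $\VNAlg'\cong\rr,\cc,\hh$, both extract unitary anti-selfadjoint generators by showing $B^*B=c\id$ forces $c=1$ when $B^2=-\id$, and both finish by identifying $\VNAlg=\VNAlg''$ through the commutation criteria from the end of Section~1. The one genuinely different step is the trichotomy itself. The paper decomposes an arbitrary $T\in\VNAlg'$ into self-adjoint and anti-selfadjoint parts to write $T=a\id+b\JJ$ with $\JJ^2=-\id$, computes $\|a\id+b\JJ\|=\sqrt{a^2+b^2}$, concludes that the operator norm on $\VNAlg'$ is multiplicative, and cites Urbanik--Wright's theorem on real normed algebras with multiplicative norm. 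You instead note that $T^*T=c\id$ with $c>0$ for $T\neq 0$, so $c^{-1}T^*$ is a two-sided inverse in $\VNAlg'$, making $\VNAlg'$ a real Banach division algebra, and you cite the real Gelfand--Mazur (Frobenius) theorem. Both are legitimate; yours is slightly shorter and avoids the norm computation, while the paper's has the side benefit of exhibiting the generic form $a\id+b\JJ$ of elements of $\VNAlg'$ before any abstract classification theorem is invoked. Two minor remarks: your first step compresses into the phrase ``the spectrum collapses to a single point'' what the paper proves by an explicit bisection argument (a $\{0,\id\}$-valued spectral measure on $\Borel(\rr)$ is a Dirac measure), and this is precisely the content that needs checking, as you yourself flag; and ``isometrically $*$-isomorphic'' is more than Gelfand--Mazur directly delivers or than you need --- a plain algebra isomorphism suffices, with the $*$-structure then read off from the explicit generators exactly as you do.
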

\begin{proof}
If $T\in\VNAlg'$ is self-adjoint, then its spectral measure takes values in $\VNAlg'$. Since $\VNAlg' = \{0,\id\}$ by \cref{VNAlgThm3} in  \Cref{VNAlgThm} because $\VNAlg$ is irreducible, we have $E(\Delta)  = 0$ or $E(\Delta) = \id$ for any $\Delta\in\Borel(\rr)$. Now observe that there exists precisely one number $n_0\in\zz$ such that $E((n_0-1,n_0]) = \id$. If there existed two such numbers $n_0,n_1\in\zz$, then we would have $E((n_0-1,n_0]) + E((n_1-1,n_1]) = \id + \id = 2\id$, which is not an orthogonal projections. If on the other hand $E((n -1, n]) = 0$ for all $n\in\zz$, we would obtain the contradiction
\[
\id\vv = E(\rr)\vv = \sum_{n\in\zz} E((n-1,n])\vv = \sum_{n\in\zz} 0\vv = \vO
\]
for all $\vv\in\hil$. Let hence $\Delta_0 = (a_0,b_0]$ with $a_0 := n-1$ and $b_0 =n$ be such that $E(\Delta_0) = \id$. We define now inductively a sequence of Borel sets $\Delta_n = (a_n,b_n]$ with $E(\Delta_n) = \id$. Precisely, if $\Delta_n = (a_n,b_n]$ with $E(\Delta_n) = \id$ is given, then the same argument as before shows that either \[
E\left(a_n,(a_n + b_n)/2]\right) = \id\quad\text{or}\quad E\left((a_n+b_n)/2, b_n]\right) = \id.
\]
In the first case we set $a_{n+1} = a_n$ and $b_{n + 1} = (a_{n}+b_{n})/2$ and it the latter case we set $a_{n+1} = (a_{n} + b_{n})/2$ and $b_{n+1} = b_{n}$. Then $E(\Delta_{n+1}) = \id$ for $\Delta_{n+1} = (a_{n+1},b_{n+1}]$. Now let $a = \lim_{n\to +\infty}a_n = \lim_{n\to+\infty}b_n$. Due to the continuity of the the spectral measure with respect to monotone limits in the strong operator topology, we find that
\[
E(\{a\}) = E\left(\bigcap_{n\in\nn}\Delta_n\right) = \lim_{n\to\infty} E(\Delta_n) = \id.
\]
Thus $E(\rr\setminus\{a\}) = 0$ and we conclude that
\[
T = \int_{\rr} s\,dE(s) = \int_{\{a\}} s\,dE(s) + \int_{\rr\setminus\{a\}} s\,dE(s) = a E(\{a\}) = a\id.
\]

Let now $T$ be an arbitrary operator in $\VNAlg'$. Then also $T^*\in\VNAlg'$ and 
\[
T = \frac{1}{2}\left(T+T^*\right) + \frac{1}{2}\left(T-T^*\right).
\]
The operator $T_1:=\frac{1}{2}\left(T + T^*\right)$ is a self adjoint operator and belongs to $\VNAlg'$ and hence $T_1 = a\id$ for some $a\in\rr$ by the above argumentation. The operator $T_2 := \frac{1}{2}\left(T- T^*\right)$ on the other hand is anti-self adjoint and so the operator $T_2^2$ is selfadjoint and belongs to $\VNAlg'$. By the above arguments, we find again that there exists some $c\in\rr$ such that $T_2^2 = c\id$. We even have $c\leq 0$ as 
\[
 c\|\vv\|^2 = \langle \vv ,c\id \vv \rangle = \langle \vv , T_2^2\vv\rangle  = - \langle T_2\vv,T_2\vv\rangle = -\|T_2\vv\|^2
 \]
for any $\vv\in\hil$ due to the anti-selfadjointness of $T_2$. We also see that $c = 0$ if and only if $T_2 = 0$ so that in this case $T = \frac{1}{2}\left(T + T^*\right) = a\id$. If $c \neq 0$, then we set $\JJ  := \frac{1}{\sqrt{-c}} T_2$. Then $\JJ ^* = -\JJ $ because $T_2$ is anti-selfadjoint and $\JJ ^2 = \frac{1}{-c} T_2^2 = -\id$. Setting $b = \sqrt{-c}$, we find that
\[
T = a \id + b\JJ 
\]
with $a,b\in\rr$.

Let now $T$ and $S$ be operators in $\VNAlg'$. Then $T = a\id + b\JJ$ and $S = c\id + d\II$ with $a,b,c,d\in\rr$ and two unitary and anti-selfadjoint operators $\II$ and $\JJ$. Since
\begin{align*}
\|T \vv\|^2 =& \langle T\vv, T\vv\rangle = \langle (a\id + b\JJ)\vv, (a\id +b \JJ) \vv\rangle\\
 =& a^2 \langle\vv,\vv\rangle + ba \langle \JJ \vv,\vv\rangle + ab\langle\vv,\JJ\vv\rangle +b^2 \langle \JJ\vv,\JJ\vv\rangle \\ 
 =& a^2 \langle\vv,\vv\rangle - ab \langle  \vv,\JJ\vv\rangle + ab \langle\vv,\JJ\vv\rangle  + b^2 \langle \vv,\vv\rangle = (a^2 + b^2) \|\vv\|^2,
\end{align*}
for any $\vv\in\hil$ and so in particular $\|T\| =\sqrt{a^2 + b^2}$. Similarly, we see that $\|S\vv\|^2 = (c^2 + d^2)\|\vv\|^2$ and $\|S\| = \sqrt{c^2 + d^2}$. Finally, we deduce from these relations that
\[
\| ST\vv \|^2 = (c^2+d^2)\|T\vv\|^2 = (c^2 + d^2)(a^2 + b^2)\|\vv\|^2
\]
and so 
\[
\|ST\| = \sqrt{(c^2 + d^2)(a^2 + b^2)} =  \sqrt{(c^2 + d^2)}\sqrt{(a^2 + b^2)} =\|S\|\|T\|.
\]

The commutant $\VNAlg'$ is therefore a normed real associative algebra with unit such that $\|T S\| = \|T\|\|S\|$ for all $T,S \in \VNAlg'$. By \cite{Urbanik:1960} any such algebra is isomorphic to either the field of real numbers $\rr$, to the field of complex numbers $\cc$ ore the skew-field of quaternions $\hh$. Let $h$ be an isomorphism of $\VNAlg$ to $\rr$, $\cc$ or $\hh$, respectively. 

If $\VNAlg'$ is isomorphic to $\rr$, then simply $\VNAlg' = h^{-1}(\rr) = \{a\id: a\in\rr\}$ and we find that $\VNAlg = \VNAlg'' = \boundOP(\hil)$, because any quaternionic linear operator commutes with any operator $a\id$ with $a\in\rr$. Hence, we also find $\Center{\VNAlg} = \VNAlg\cap\VNAlg' = \{a\id:a\in\rr\}$ and $\QMLat(\VNAlg) = \QMLat(\hil)$. 

If $\VNAlg'$ is isomorphic to $\cc$, then $\VNAlg' = \{ a\id + b\JJ: a,b\in\rr\}$ with $\JJ = h^{-1}(i)$. Let us show that $\JJ$ is unitary and anti-selfadjoint. Since $h$ is an isomorphism, we have $\JJ^2 = h(i^2) = h(-1) = -\id$. Since $\VNAlg'$ is a $*$-algebra, not only $\JJ$ but also the operator $\JJ^*$ and in turn even  $\JJ\JJ^*$ belong to $\VNAlg'$. Since $\JJ\JJ^*$ is selfadjoint, the arguments at the beginning of the proof imply that 
\[
\JJ\JJ^* = a\id
\]
 for some $a\in\rr$. Moreover, $a >0$ because
\[
a\|\vv\|^2 = \langle \vv, a\id\vv\rangle = \langle \vv, \JJ\JJ^*\vv\rangle =  \langle \JJ^*\vv, \JJ^*\vv\rangle = \|\JJ\vv\|^2.
\]
Since $\JJ^2 = -\id$, we have
$\JJ^* = (-\JJ\JJ)\JJ^* = -\JJ (\JJ\JJ^*) = -\JJ a$ and so $\JJ^* = -\frac{1}{a}\JJ.$ Taking the adjoint, we find that $\JJ = - \frac{1}{a}\JJ^*$ and so also $\JJ^* = - a\JJ$.  Finally, the identity $0 = \JJ^* - \JJ^* = \left(a - \frac{1}{a}\right)\JJ$ implies $a = 1$ and so $\JJ^* = -\JJ$. Hence, $\JJ$ is actually unitary and anti-selfadjoint.

An operator $T\in\boundOP(\hil)$ belongs to $\VNAlg = \VNAlg''$ if and only if it commutes with any operator in $\VNAlg'=\{a\id + b\JJ:a,b\in\rr\}$. Since any operator $\boundOP(\hil)$ commutes with real multiples of the identity, an operator commutes with $\VNAlg'$ if and only if it commutes with $\JJ$. This in turn is the case if and only if the operator $T$ is the quaternionic linear extension of an operator in $\boundOP(\hil_{\JJ,\uI}^{+})$ and so $\VNAlg\cong\boundOP(\hil_{\JJ,\uI}^{+})$. In particular, this implies $\JJ\in\VNAlg$, because it is the quaternionic linear extension of the multiplication with $\uI$ on $\hil_{\JJ,\uI}^{+}$, and  $\Center{\VNAlg} = \VNAlg'$ and $\QMLat(\VNAlg) \cong \QMLat(\hil_{\JJ,\uI}^{+})$. 

Finally, if $\VNAlg'$ is isomorphic to $\hh$, then
\[
\VNAlg' = \{ a\id + b\II + c\JJ + d\KK: a,b,c,d\in\rr\}.
\]
with $\II = h^{-1}(e_1)$, $\JJ = h^{-1}(e_2)$ and $\KK = h^{-1}(e_3)$, where $e_1$, $e_2$, and $e_3$ are the generating units of $\hh$. As above, one can see that $\II$, $\JJ$, and $\KK$ are anti-selfadjoint and unitary. Since an operator belongs to $\VNAlg = \VNAlg''$ if and only if it commutes with $\II$ and $\JJ$ and in turn also with $\KK = \II\JJ$, the operators $\II$, $\JJ$ and $\KK$ themselves do not belong to $\VNAlg$ because they anticommute mutually. If we choose $\uI,\uJ\in\SS$ with $\uI\perp\uJ$, then we can define a left-multiplication on $\hil$ by setting $\II\vv = \vv\uI$ and $\JJ\vv = \vv\uJ$ for all $\vv\in\hil$ and turn into a two-sided Banach space. The space $\hil_{\rr}$ consisting of those vectors that commute with all quaternions is then a real Hilbert space.\todo{Justify!} 

An operator belongs to $\VNAlg = \VNAlg''$ if and only if it commutes with any operator in $\VNAlg'$, that is with any operator of the form $a\id + b\II + c\JJ + d\KK$ or---equivalently---with any quaternion when we consider the left multiplication induced by $\II$ and $\JJ$ on $\hil$. These operators are however precisely those that are quaternionic linear extensions of real-linear operators on $\hil_{\rr}$\todo{, cf. \Cref{OPCompBd} and the discussion before}.

\end{proof}

Wigner's theorem states that any symmetry of an elementary complex quantum system can be represented by a unitary linear or an anti-linear anti-unitary operator on $\hil$. Similarly, any symmetry of a real or a quaternionic quantum system can be represented by a unitary linear operator on $\hil$ \cite[Theorem~4.27]{Varadarajan:1985}. This statement is specified for elementary real systems in \cite[Proposition~5.5]{Moretti:2017}, which we want to recall now.
\begin{theorem}\label{SSThmReal}
We consider an elementary system described by an irreducible von Neumann algebra $\VNAlg$ on  a real Hilbert space. If $h$ is a symmetry of the system---that is $h:\QMLat(\VNAlg)\to\QMLat(\VNAlg)$ is a lattice automorphism---then there exists a unitary operator $U:\hil\to\hil$ such that 
\begin{equation}\label{SymUnR}
h(E) = U E U^{-1}\qquad \forall E\in\QMLat(\VNAlg).
\end{equation}
Furthermore the following facts hold true.
\begin{enumerate}[a)]
\item If $\VNAlg$ is of real-real or real-quaternionic type, then $U\in\VNAlg$. 
\item If $\VNAlg$ is real-complex with $\VNAlg' = \{a \id + b\JJ : a,b\in\rr\}$, then $U$ either commutes with $\JJ$ (and hence $U\in\VNAlg$) or it anticommutes with $\JJ$ (and hence $U\notin \VNAlg'$ but $U^2\in\VNAlg$).
\item If $\VNAlg$ is of real-real or real-quaternionic type, then every unitary operator $U$ in $\VNAlg$ defines a symmetry via \eqref{SymUnR}. Similarly, if $\VNAlg$ is of real-complex type, then every unitary operator $U$ that either commutes or anticommutes with $\JJ$ defines a symmetry via \eqref{SymUnR}. Two such unitary operators $U$ and $U'$ define the same symmetry if and only if $U'U^{-1}\in\Center{\VNAlg}$. 
\end{enumerate}
\end{theorem}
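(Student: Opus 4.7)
The plan is to exploit the classification in Theorem~\ref{VNT-real}, which identifies the lattice $\QMLat(\VNAlg)$ with the full projection lattice $\QMLat(\hil_?)$ of an auxiliary Hilbert space $\hil_?\in\{\hil,\hil_{\JJ},\hil_{\Theta}\}$ depending on the type of $\VNAlg$, and to reduce the existence of $U$ to Wigner's theorem on $\hil_?$. Transferring $h$ under this identification gives a lattice automorphism of $\QMLat(\hil_?)$, and Varadarajan's version of Wigner's theorem (\cite[Theorem~4.27]{Varadarajan:1985}) yields an implementing operator $V$ on $\hil_?$: a unitary in the real-real and real-quaternionic settings, and either a $\cc_{\uI}$-linear unitary or a $\cc_{\uI}$-antilinear anti-unitary in the real-complex setting. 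Reading $V$ back as an $\rr$-linear isometry on $\hil$ produces the required unitary $U$ realising~\eqref{SymUnR}.

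Next I would pin down where $U$ lies relative to $\VNAlg$. In the real-real case $\VNAlg=\boundOP(\hil)$, so $U\in\VNAlg$ trivially. In the real-quaternionic case, $V$ being quaternionic right-linear on $\hil_{\Theta}$ is equivalent to $U$ commuting with $\II$ and with $\JJ$, which by the structural description in Theorem~\ref{VNT-real} places $U$ in $\VNAlg=(\VNAlg')'$. In the real-complex case, a $\cc_{\uI}$-linear $V$ corresponds to $U\JJ=\JJ U$ and hence $U\in\VNAlg$, while an antilinear $V$ corresponds to $U\JJ=-\JJ U$; in the latter situation every element $a\id+b\JJ$ of $\VNAlg'$ commutes with $\JJ$, so $U\notin\VNAlg'$, whereas $U^2\JJ=U(-\JJ U)=-(U\JJ)U=\JJ U^2$ shows $U^2\in\VNAlg$.

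For part~(c), the converse in the real-real and real-quaternionic cases is immediate: if $U\in\VNAlg$ is unitary and $E\in\QMLat(\VNAlg)$, then $UEU^{-1}$ is again an orthogonal projection in $\VNAlg$. In the real-complex case the same holds for either parity of $U$ relative to $\JJ$, since $E\JJ=\JJ E$ implies $\JJ(UEU^{-1})=\pm UE\JJ U^{-1}=(UEU^{-1})\JJ$ irrespective of the sign. For uniqueness, if $U$ and $U'$ induce the same automorphism then $W:=U'U^{-1}$ commutes with every $E\in\QMLat(\VNAlg)$; applying the spectral theorem to every self-adjoint element of $\VNAlg$ and using the generating identity $\VNAlg=(\QMLat(\VNAlg)\cup\mathfrak{J}(\VNAlg))''$ from Theorem~\ref{VNAlgThm} forces $W\in\VNAlg'$, while $W$ manifestly lies in $\VNAlg$, so $W\in\Center{\VNAlg}$. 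The main subtlety I anticipate is the real-complex case with mixed parities: one must rule out that one of $U,U'$ commutes and the other anticommutes with $\JJ$, which follows from a short Schur-type argument showing that an antilinear operator on $\hil_{\JJ}$ commuting with every orthogonal projection must vanish, contradicting unitarity of $W$.
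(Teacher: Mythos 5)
Your overall route is the same one the paper uses: the paper itself states \Cref{SSThmReal} as a recollection of Proposition~5.5 of \cite{Moretti:2017} without reproving it, but its proof of the quaternionic analogue (\Cref{SSThm}) proceeds exactly as you propose --- identify $\QMLat(\VNAlg)$ with the full projection lattice of the auxiliary space supplied by the classification theorem, invoke the appropriate (real, complex, or quaternionic) version of Wigner's theorem from \cite{Varadarajan:1985} there, and read the implementing map back as an $\rr$-linear unitary on $\hil$, with the antilinear case of Wigner corresponding to anticommutation with $\JJ$. Parts (a), (b) and the ``every such $U$ defines a symmetry'' half of (c) are handled in your proposal just as in the paper. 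Two small remarks on the existence step: Varadarajan's quaternionic Theorem~4.27 produces only a co-unitary $S$ with $S(\vv a)=S(\vv)q^{-1}aq$, and one must multiply by the phase $q^{-1}$ to obtain a genuinely right-linear unitary (the paper does this explicitly in the proof of \Cref{SSThm}); this is routine but worth recording.

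The one step that does not work as written is the uniqueness argument in (c). From $h(E)=UEU^{-1}=U'EU'^{-1}$ you correctly get that $W=U'U^{-1}$ commutes with every $E\in\QMLat(\VNAlg)$, hence with $\QMLat(\VNAlg)''$. But by \Cref{VNAlgThm} this bicommutant contains only the self-adjoint part of $\VNAlg$ and may be a proper subalgebra; to conclude $W\in\VNAlg'$ via the generating identity $(\QMLat(\VNAlg)\cup\mathfrak{J}(\VNAlg))''=\VNAlg$ you would also need $W$ to commute with the anti-self-adjoint generators in $\mathfrak{J}(\VNAlg)$, which does not follow from commuting with the projections alone. The conclusion is nevertheless correct and is best obtained directly in each type, where one actually shows $\QMLat(\VNAlg)'=\VNAlg'$: in the real-real case a Schur argument on one-dimensional projections gives $W=a\id$; in the real-quaternionic case $W$ preserves every quaternionic line of $\hil_{\Theta}$ and additivity forces $W$ to be a fixed right scalar multiplication, i.e.\ $W\in\{a\id+b\II+c\JJ+d\KK\}$, and since $W\in\VNAlg$ this lands in $\Center{\VNAlg}=\{a\id\}$; in the real-complex case split $W$ into its $\JJ$-commuting and $\JJ$-anticommuting parts, apply the complex Schur lemma to the first and your (correct) antilinear Schur lemma to the second to get $W=a\id+b\JJ$. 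Alternatively, you can sidestep the commutant computation entirely by using the uniqueness clause of Wigner's theorem on the auxiliary space (two implementing maps differ by a unit phase), which is how the paper argues in the proof of \Cref{SSThm}.
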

Again we find a similar result for elementary quaternionic quantum systems.
\begin{theorem}\label{SSThm}
We consider an elementary system described by an irreducible von Neumann algebra $\VNAlg$ on  a quaternionic Hilbert space $\hil$. If $h$ is a symmetry---that is $h:\QMLat(\VNAlg)\to\QMLat(\VNAlg)$ is a lattice automorphism---then there exists a unitary operator $U:\hil\to\hil$ such that 
\begin{equation}\label{SymUn}
h(E) = U E U^{-1}\qquad \forall E\in\QMLat(\VNAlg).
\end{equation}
Furthermore the following facts hold true.
\begin{enumerate}[a)]
\item If $\VNAlg$ is proper quaternionic or real induced, then $U\in\VNAlg$. 
\item If $\VNAlg$ is complex induced with $\VNAlg' = \{a \id + bJ : a,b\in\rr\}$, then $U$ either commutes with $\JJ$ (and hence $U\in\VNAlg$) or $\JJ$ anticommutes with $\JJ$ (and hence $U\notin \VNAlg'$ but $U^2\in\VNAlg$).
\item If $\VNAlg$ is proper quaternionic or real induced, then every unitary operator $U$ in $\VNAlg$ defines a symmetry via \eqref{SymUn}. Similarly, if $\VNAlg$ is complex induced, then every unitary operator $U$ that either commutes or anticommutes with $J$ defines a symmetry via \eqref{SymUn}. Two such unitary operators $U$ and $U'$ define the same symmetry if and only if $U'U^{-1}\in\Center{\VNAlg}$. \end{enumerate}
\end{theorem}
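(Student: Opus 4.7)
The plan is to mimic the proof of \Cref{SSThmReal} (\cite[Proposition~5.5]{Moretti:2017}), using the quaternionic classification in \Cref{VNT} in place of its real analogue.

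For the existence of $U$, I would use that in each case of \Cref{VNT} there is a lattice isomorphism $\QMLat(\VNAlg)\cong\QMLat(\mathcal{K})$, where $\mathcal{K}=\hil$, $\mathcal{K}=\hil_{\JJ,\uI}^{+}$, or $\mathcal{K}=\hil_{\rr}$ respectively. Transferring $h$ along this isomorphism and applying the classical Wigner--Varadarajan theorem (cf.~\cite[Theorem~4.27]{Varadarajan:1985}) on $\mathcal{K}$ gives a unitary (in the proper quaternionic and real-induced cases) or a unitary or anti-unitary (in the complex-induced case) operator on $\mathcal{K}$ implementing the transferred symmetry; its quaternionic linear extension to $\EHil{\hh}{\mathcal{K}}\cong\hil$ yields a unitary $U$ on $\hil$ satisfying~\eqref{SymUn}.

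For the structure of $U$, one first observes that conjugation by $U$ preserves $\QMLat(\VNAlg)$ by construction, and hence --- via the spectral theorem for selfadjoint operators together with parts (i) and (iv) of \Cref{VNAlgThm} --- it preserves all of $\VNAlg$ and consequently $\VNAlg'$. If $\VNAlg$ is proper quaternionic, then $\VNAlg=\boundOP(\hil)$ contains every unitary and so $U\in\VNAlg$. If $\VNAlg$ is complex induced, conjugation by $U$ induces an $\rr$-algebra automorphism of $\VNAlg'\cong\cc$, which is either the identity (so $U\JJ=\JJ U$ and $U\in\VNAlg$) or the complex conjugation (so $U\JJ=-\JJ U$, whence $U^{2}\JJ=\JJ U^{2}$ and $U^{2}\in\VNAlg$). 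If $\VNAlg$ is real induced, conjugation by $U$ induces an $\rr$-algebra automorphism of $\VNAlg'\cong\hh$, which is inner via the classical $\mathrm{SU}(2)\to\mathrm{SO}(3)$ double cover; hence there is a unitary $V\in\VNAlg'$ with $UBU^{-1}=VBV^{-1}$ for all $B\in\VNAlg'$, so $V^{-1}U$ commutes with $\VNAlg'$ and therefore lies in $\VNAlg''=\VNAlg$. Since $V\in\VNAlg'$ commutes with every element of $\VNAlg$, $V^{-1}U$ defines the same symmetry as $U$, so one may replace $U$ by $V^{-1}U\in\VNAlg$.

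For the converse in (c) I would verify that every unitary $U$ of the stated form maps $\QMLat(\VNAlg)$ to itself by conjugation: in the proper quaternionic and real-induced cases this is immediate from $U\in\VNAlg$, and in the complex-induced anticommuting subcase it follows from
\[
\JJ(UEU^{-1})=-U\JJ EU^{-1}=-UE\JJ U^{-1}=(UEU^{-1})\JJ\qquad(E\in\QMLat(\VNAlg)),
\]
so that $UEU^{-1}$ commutes with $\JJ$ and hence belongs to $\VNAlg$. For uniqueness, if $U,U'$ both implement $h$ then $W:=U'U^{-1}$ commutes with every $E\in\QMLat(\VNAlg)$, and the remaining analysis --- showing $W\in\Center{\VNAlg}$ --- proceeds as in the real case, treating separately the subcases of commutation behaviour with $\JJ$ in the complex-induced setting. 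I expect the main obstacle to be the real-induced case, which requires both an appropriate Wigner-type theorem on the real component $\hil_{\rr}$ and the classical innerness of every $\rr$-algebra automorphism of $\hh$, in order to modify $U$ by a unitary element of $\VNAlg'$ so as to bring it into $\VNAlg$.
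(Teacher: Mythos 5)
Your overall strategy for the existence of $U$ --- transfer $h$ to the component space $\mathcal{K}$ via the lattice isomorphisms of \Cref{VNT}, apply the appropriate version of Wigner's theorem there, and extend quaternionic linearly --- is exactly the paper's. Your commutant-automorphism argument for (a) and (b) is a genuinely different (and in the real-induced case more roundabout) route: the paper gets $U\in\VNAlg$, resp.\ the (anti)commutation with $\JJ$, for free from the construction, since the quaternionic linear extension of an operator on $\hil_{\rr}$ automatically commutes with $\II,\JJ,\KK$, and the linear/antilinear dichotomy in Wigner's theorem on $\hil_{\JJ,\uI}^{+}$ translates directly into commuting/anticommuting with $\JJ$.

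There are, however, two concrete gaps. First, in the complex-induced case the phrase ``its quaternionic linear extension'' is undefined when the Wigner map $S$ on $\hil_{\JJ,\uI}^{+}$ is antilinear: an antilinear map admits no quaternionic right linear extension. The paper must first pass to the $\cc_{\uI}$-linear map $U_{\cc_{\uI}}(\vv)=S(\vv)\uJ$ from $\hil_{\JJ,\uI}^{+}$ to $\hil_{\JJ,\uI}^{-}$, extend that, and then verify by an explicit computation that the resulting unitary $U$ (which anticommutes with $\JJ$) still satisfies $UEU^{-1}=h(E)$; this verification, together with the matching adjustments in the uniqueness part of (c) (where $\alpha\id$ on $\hil_{\JJ,\uI}^{+}$ must be translated into $\alpha_0\id+\alpha_1\JJ$ on $\hil$), is the bulk of the paper's proof and is absent from yours. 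A smaller omission of the same kind occurs in the proper quaternionic case: Varadarajan's quaternionic Wigner theorem produces a map satisfying $S(\vv a)=S(\vv)q^{-1}aq$, which is not quaternionic linear and must first be corrected by a right multiplication by $q^{-1}$. Second, the step ``conjugation by $U$ preserves $\QMLat(\VNAlg)$, hence preserves all of $\VNAlg$'' does not follow from parts (i) and (iv) of \Cref{VNAlgThm}: part (iv) explicitly allows $\QMLat(\VNAlg)''\subsetneq\VNAlg$ on a quaternionic Hilbert space, so the projections need not generate $\VNAlg$. What you actually need is $\QMLat(\VNAlg)'=\VNAlg'$ for the three algebras classified in \Cref{VNT}, which is true but requires its own argument; alternatively, your second paragraph can simply be dropped, since the operator $U$ built in your first paragraph already has the required commutation properties by construction.
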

\begin{proof}
We recall that the lattice $\QMLat(\VNAlg)$ is isomorphic to $\QMLat(\hil)$, to $\QMLat(\hil_{\JJ,\uI}^{+})$ or to $\QMLat(\hil_{\rr})$  because of \Cref{VNT}. Any isomorphism lattice automorphism on $\QMLat(\VNAlg)$ hence induces a lattice automorphism on  $\QMLat(\hil)$,  $\QMLat(\hil_{\JJ,\uI}^{+})$ resp. $\QMLat(\hil_{\rr})$.

If $\QMLat(\VNAlg) = \QMLat(\hil)$, then the quaternionic version of Wigner's theorem, Theorem~4.27 in  \cite{Varadarajan:1985}, implies for any symmetry $h$ the existence of  a bijective function  $S: \hil\to\hil$ with the properties that 
\begin{enumerate}[(A)]
\item\label{JJAC1} $S$ is additive and
\item\label{JJAC2} there exists $q\in\hh$ with $|q| = 1$ such that $S(\vv a) = \vv q^{-1}a q$ and $\langle S\vv, S\vu \rangle = q^{-1} \langle \vv,\vu\rangle q$ for all $\vu,\vv\in\hil$ and all $a\in\hh$ 
\end{enumerate}
such that
\begin{equation}\label{SES}
h(E) = S E S^{-1}.
\end{equation}
Furthermore any function $S_p:\hil\to\hil$ of the form $S_p(\vv) = S\vv p$ with $p\in\hh$ and $|p| = 1$ also satisfies \eqref{SES}. If we choose $p = q^{-1}$, then we obtain a quaternionic right linear unitary operator $U = S_p\in\boundOP(\hil)$ such that \eqref{SymUn} holds true. Furthermore, again by Wigner's theorem, any other unitary operator in $\boundOP(\hil)$ satisfies \eqref{SymUn} if and only if $U'\vv = S_{r}(\vv) = S(\vv)r = U(\vv)p^{-1}r$ for some $r\in\hh$ with $|r| = 1$. An operator of this form is however quaternionic right linear and unitary if and only if $p^{-1}r\in \{\pm 1\}$, which is equivalent to  $U'U^{-1} = \mp \id$ and hence to $U'U^{-1}$ being a unitary operator in $\Center{\VNAlg}$. Finally, Wigner's theorem also implies that any bijective function $S:\hil\to\hil$ that satisfies \cref{JJAC1,JJAC2} induces a symmetry on $\QMLat(\hil)$ via \eqref{SymUn}. Hence, in particular, any unitary operator on $\hil$ induces a symmetry. 

If $\QMLat(\VNAlg) = \QMLat(\hil_{\rr})$, then any symmetry $h$ on $\QMLat(\VNAlg)$ defines a symmetry $h_{\rr}$ on $\QMLat(\hil_{\rr})$ via
\begin{equation}\label{SESR}
h_{\rr}(E_{\rr}) = h(E)|_{\hil_{\rr}},\qquad \text{if}\quad E_{\rr} = E|_{\rr}.
\end{equation}
Hence, the real version of Wigner's theorem \cite[Theorem~4.27]{Varadarajan:1985} implies the existence of a unitary operator $U_{\rr}$ on $\hil_{\rr}$ such that $h_{\rr}(E_{\rr}) = U_{\rr}E_{\rr}U_{\rr}^{-1}$. If we denote the quaternionic linear extension of $U_{\rr}$ to all of $\hil$ by $U$, then $U$ is a unitary operator on $\hil$. For any $E\in\QMLat(\VNAlg)$ we have after setting $E_{\rr} := E|_{\hil_{\rr}}$ that
\[
h(E)|_{\hil_{\rr}} = h_{\rr}(E_{\rr}) = U_{\rr} E_{\rr} U_{\rr}^{-1}.
\]
Extending these operators to  quaternionic linear operators on $\hil$, we find $h(E) = U E U^{-1}$. It follows also from Wigner's theorem that the operator $U_{\rr}$ is unique up to sign so that a unitary operator $U_{\rr}'$ induces $h$ via \eqref{SymUn} if and only if $U_{\rr}'U_{\rr}^{-1} = \pm \id$. Thus, a unitary operator $U'\in\boundOP(\hil)$ induces $h$ if and only of $(U'U)|_{\hil_{\rr}} = U_{\rr}'U_{\rr}^{-1} = \pm \id$, which is equivalent to $U'U = \pm \id$ and in turn to $U'U^{-1}$ being a unitary operator in $\Center{\VNAlg}$. Finally, Wigner's theorem also states that any unitary operator on $\hil_{\rr}$ induces a symmetry on $\boundOP(\hil_{\rr})$. Since the unitary operators in $\VNAlg$ are exactly the operators that are quaternionic linear extensions of unitary operators on $\hil_{\rr}$, any such operator induces a symmetry on $\QMLat(\VNAlg)$ via \eqref{SESR}.

If finally $\QMLat(\VNAlg) = \QMLat(\hil_{\JJ,\uI}^{+})$, then any symmetry $h$ on $\QMLat(\VNAlg)$ defines a symmetry $h_{\cc_{\uI}}$ on $\QMLat(\hil_{\cc_{\uI}^+})$ via
\begin{equation}\label{CCSym12}
h_{\cc_{\uI}}(E_{\cc_{\uI}}) = h(E)|_{\hil_{\JJ,\uI}^{+}},\qquad \text{if}\quad E_{\cc_{\uI}} = E|_{\hil_{\JJ,\uI}}^{+}.
\end{equation}
Hence, the complex linear version of Wigner's theorem \cite[Theorem~4.28]{Varadarajan:1985} implies the existence of a bijective mapping $S:\hil_{\JJ,\uI}^{+}\to\hil_{\JJ,\uI}^{+}$ such that either
\begin{enumerate}[(I)]
\item\label{CCUO1}$S$ is $\cc_{\uI}$-complex linear and $\langle S\vv,S\vu\rangle_{\hil_{\JJ,\uI}^{+}} = \langle\vv,\vu\rangle_{\hil_{\JJ,\uI}^{+}}$ for all $\vu,\vv\in\hil_{\JJ,\uI}^{+}$, i.e. $S$ is a bounded unitary operator on $\hil_{\JJ,\uI}^{+}$ or
\item\label{CCUO2} $S$ is $\cc_{\uI}$-complex anti-linear and $\langle S\vv,S\vu\rangle_{\hil_{\JJ,\uI}^{+}} = \overline{\langle\vv,\vu\rangle_{\hil_{\JJ,\uI}^{+}}}$ for all $\vu,\vv\in\hil_{\JJ,\uI}^{+}$
\end{enumerate}
and such that
\begin{equation}\label{SESCC}
h_{\cc_{\uI}}(E_{\cc_{\uI}}) = S \circ E_{\cc_{\uI}} \circ S^{-1}\qquad \text{for } E_{\cc_{\uI}} \in\QMLat(\hil_{\JJ,\uI}^{+}).
\end{equation}
If \cref{CCUO1} holds true, then the quaternionic linear extension $U$ of $S$ to $\hil$ is a unitary operator on $\hil$ that commutes with $\JJ$ such that \eqref{SymUn} holds true. If on the other hand \cref{CCUO2} holds true, then we can choose $\uJ\in\SS$ with $\uI\perp\uJ$ and find that the operator $U_{\cc_{\uI}}:\hil_{\JJ,\uI}^{+}\to\hil_{\JJ,\uI}^{-}$ given by $U_{\cc_{\uI}}(\vv) = S(\vv)\uJ$ is a $\cc_{\uI}$-linear unitary operator. Indeed, for $\vu,\vv\in\hil$ and $a\in\cc_{\uI}$, we have 
\[
U_{\cc_{\uI}}(\vv a) = S(\vv a)\uJ = S(\vv)\overline{a}\uJ = S(\vv)\uJ a
\]
and
\begin{gather*}
 \langle U_{\cc_{\uI}}\vv, U_{\cc_{\uI}}\vu \rangle_{\hil_{\JJ,\uI}^{-}} = \langle S(\vv)\uJ, S(\vu)\uJ \rangle_{\hil} = -\uJ \langle S(\vv), S(\vu) \rangle_{\hil} \uJ\\
  = -\uJ \langle S(\vv), S(\vu) \rangle_{\hil_{\JJ,\uI}^{+}} \uJ  =  (-\uJ)\overline{\langle \vv, \vu \rangle_{\hil_{\JJ,\uI}^{+}}} \uJ = \langle \vu, \vv \rangle_{\hil_{\JJ,\uI}^{+}}
   \end{gather*}
because $\overline{\langle \vv, \vu \rangle_{\hil_{\JJ,\uI}^{+}}}\in\cc_{\uI}$. Since $S:\hil_{\JJ,\uI}^{+} \to \hil_{\JJ,\uI}^{+}$ and $\vv \mapsto \vv\uJ:\hil_{\JJ,\uI}^{+}\to \hil_{\JJ,\uI}^{-}$ are bijective, also their composition $U_{\cc_{\uI}}$ is bijective. If we write $\vv \in\hil$ as $\vv = \vv_{1} + \vv_{2}\uJ$ with $\vv_{1},\vv_{2}\in\hil_{\JJ,\uI}^{+}$, then the quaternionic linear extension $U$ of $U_{\cc_{\uI}}$ to all of $\hil$ is given by $U(\vv) = U_{\cc_{\uI}}\vv_{1} + U_{\cc_{\uI}}\vv_{2}\uJ$. This operator is obviously also bijective and moreover unitary since
\begin{align*}
&\langle U\vu,U\vv\rangle_{\hil} = \langle U(\vu_1 + \vu_2 \uJ) , U(\vv_1 + \vv_2 \uJ) \rangle_{\hil} \\
=& \langle U_{\cc_{\uI}}\vu_1 , U_{\cc_{\uI}}\vv_1  \rangle_{\hil_{\JJ,\uI}^{-}} +  \langle U_{\cc_{\uI}}\vu_1 , U_{\cc_{\uI}} \vv_2 \rangle_{\hil_{\JJ,\uI}^{-}} \uJ  \\
&- \uJ \langle U_{\cc_{\uI}} \vu_2  , U_{\cc_{\uI}}\vv_1  \rangle_{\hil_{\JJ,\uI}^{-}} - \uJ \langle U_{\cc_{\uI}} \vu_2  , U_{\cc_{\uI}}\vv_2  \rangle_{\hil_{\JJ,\uI}^{-}} \uJ\\
=& \langle \vu_1 , \vv_1  \rangle_{\hil_{\JJ,\uI}^{+}} +  \langle \vu_1 ,  \vv_2 \rangle_{\hil_{\JJ,\uI}^{+}} \uJ  - \uJ \langle  \vu_2  , \vv_1  \rangle_{\hil_{\JJ,\uI}^{+}} - \uJ \langle  \vu_2  , \vv_2  \rangle_{\hil_{\JJ,\uI}^{+}} \uJ  \\
=& \langle \vu_1 + \vu_2 \uJ , \vv_1 + \vv_2 \uJ \rangle_{\hil} = \langle\vu ,\vv\rangle_{\hil}.
\end{align*}
The inverse of $U$ is the quaternionic linear extension of $U_{\cc_{\uI}}^{-1}$, which is given by 

\[
U^{-1}(\vv) = U_{\cc_{\uI}}^{-1}(\vv_1\uJ)(-\uJ) + U_{\cc_{\uI}}^{-1}(\vv_2\uJ).
\]
On the other hand $U_{\cc_{\uI}}^{-1}(\tilde{\vv}) = S^{-1}(\tilde{\vv}(-\uJ))$ for $\tilde{\vv}\in\hil_{\JJ,\uI}^{-}$ and so 
\[
U^{-1}(\vv) = U_{\cc_{\uI}}^{-1}(\vv_1\uJ)(-\uJ) + U_{\cc_{\uI}}^{-1}(\vv_2\uJ) =  S^{-1}(\vv_1)(-\uJ) + S^{-1}(\vv_2).
\]
 For $E\in\QMLat(\VNAlg)$, we therefore find
\begin{align*}
&UEU^{-1} \vv = UE \left( S^{-1}(\vv_1) (-\uJ) + S^{-1}(\vv_2)\right) =\\
=& U\left(E_{\cc_{\uI}}(S^{-1}(\vv_1))(-\uJ) + E_{\cc_{\uI}}S^{-1}(\vv_2)\right)\\
=& U\left(S^{-1} \left(h_{\cc_{\uI}}(E_{\cc_{\uI}})(\vv_1)\right)(-\uJ) + S^{-1}\left(h_{\cc_{\uI}}(E_{\cc_{\uI}})(\vv_2)\right)\right) = \\
=& U_{\cc_{\uI}}\left(S^{-1} \left(h_{\cc_{\uI}}(E_{\cc_{\uI}})(\vv_1)\right)\right)(-\uJ) + U_{\cc_{\uI}}\left(S^{-1} \left(h_{\cc_{\uI}}(E_{\cc_{\uI}})(\vv_2)\right)\right) \\
=& S\left(S^{-1} \left(h_{\cc_{\uI}}(E_{\cc_{\uI}})(\vv_1)\right)\right)\uJ(-\uJ) + S\left(S^{-1}\left(h_{\cc_{\uI}}(E_{\cc_{\uI}})(\vv_2)\right)\right)\uJ\\
= &h_{\cc_{\uI}}(E_{\cc_{\uI}})(\vv_1)( - \uJ)\uJ + h_{\cc_{\uI}}(E_{\cc_{\uI}})(\vv_2)\uJ = h(E)\vv
\end{align*}
and so also in this case \eqref{SymUn} holds true. 

If $U$ is a unitary operator on $\hil$ that commutes with $\JJ$, then its restriction $S := U|_{\hil_{\cc_{\uI}}}$ to $\hil_{\JJ,\uI}^{+}$ is a unitary operator on $\hil_{\JJ,\uI}^{+}$, that is it satisfies \cref{CCUO1}. Hence, Wigner's theorem implies that $S$ induces a symmetry $h_{\cc_{\uI}}$ on $\QMLat(\hil_{\JJ,\uI}^{+})$ via \eqref{SESCC} and so $U$ induces the symmetry $h$ on $\QMLat(\VNAlg)$ that is characterized by 
\eqref{SESCC} via \eqref{SymUn}. If on the other hand $U$ is a unitary operator that anticommutes with $\JJ$, then $U_{\cc_{\uI}}:=U|_{\hil_{\JJ,\uI}^{+}}$ is a unitary operator from $\hil_{\JJ,\uI}^{+}$ to $\hil_{\JJ,\uI}^{-}$. Consequently, $S(\vv):= (U_{\cc_{\uI}}\vv)(- \uJ)$ is a bijective $\cc_{\uI}$-anti-linear mapping from $\hil_{\JJ,\uI}^{+}$ into itself that satisfies 
\begin{gather*}
\langle S(\vv),  S(\vu) \rangle_{\hil_{\JJ,\uI}^{+}} = \langle (U_{\cc_{\uI}}\vv)(-\uJ), (U_{\cc_{\uI}} \vu)(-\uJ) \rangle_{\hil} = \uJ\langle U_{\cc_{\uI}}\vv, U_{\cc_{\uI}} \vu \rangle_{\hil}(-\uJ)\\
 = \uJ \langle U_{\cc_{\uI}}\vv, U_{\cc_{\uI}} \vu \rangle_{\hil_{\JJ,\uI}^{-}}(-\uJ) = \uJ\langle \vv,  \vu \rangle_{\hil_{\JJ,\uI}^{+}}(-\uJ) = \overline{\langle \vv,  \vu \rangle_{\hil_{\JJ,\uI}^{+}}}
 \end{gather*}
 for any $\vu,\vv\in\hil_{\JJ,\uI}^{+}$. Hence, $S$ satisfies \cref{CCUO2}. Wigner's theorem implies again that $S$ induces a symmetry $h_{\cc_{\uI}}$ on $\QMLat(\hil_{\JJ,\uI}^{+})$ via \eqref{SESCC}. If $E$ is the quaternionic linear extension of $E_{\cc_{\uI}}$ to all of $\hil$, then the quaternionic linear extension of $h_{\cc_{\uI}}(E_{\cc_{\uI}}) = S\circ E_{\cc_{\uI}}\circ S^{-1}$ to all of $\hil$ is due to $S^{-1}(\vv) = U_{\cc_{\uI}}^{-1}(\vv\uJ)$ given by
 \begin{align*}
& h_{\cc_{\uI}}(E_{\cc_{\uI}})(\vv) =  S\circ E_{\cc_{\uI}}\circ S^{-1}(\vv_1) +  S\circ E_{\cc_{\uI}}\circ S^{-1}(\vv_2)\uJ =\\
=& S\circ E_{\cc_{\uI}}\left(U_{\cc_{\uI}}^{-1}(\vv_1\uJ)\right) +  S\circ E_{\cc_{\uI}}\left(U_{\cc_{\uI}}^{-1}(\vv_2\uJ)\right)\uJ\\
=& S\left( EU^{-1}(\vv_1\uJ)\right) +  S\left(EU^{-1}(\vv_2\uJ)\right)\uJ\\
=& U_{\cc_{\uI}}\left( EU^{-1}(\vv_1\uJ)(-\uJ)\right) +  U_{\cc_{\uI}}\left(EU^{-1}\vv_2\uJ(-\uJ)\right)\uJ \\
=& UEU^{-1}\vv_1 + UEU^{-1}\vv_2\uJ = UEU^{-1}\vv.
 \end{align*}
We conclude that $U$ induces the symmetry $h$ on $\QMLat(\VNAlg)$ that is characterized by  \eqref{SESCC} via~\eqref{SymUn}.

Finally, Wigner's theorem also states that two bijective functions $S$ and $S'$ that satisfy \cref{CCUO1} or \cref{CCUO2} induce the same symmetry $h_{\cc_{\uI}}$ on $\boundOP(\hil_{\JJ,\uI}^{+})$ if and only if $S' = \alpha S$ with $\alpha\in\cc_{\uI}$ and $|\alpha| = 1$. In particular either $S$ and $S'$ both satisfy \cref{CCUO1} or they both satisfy \cref{CCUO2}. Now observe that $U$ is a unitary operator on $\hil$ that commutes with $\JJ$ if and only if $S = U|_{\hil_{\JJ,\uI}^{+}}$ satisfies \cref{CCUO1} and that $U$ is a unitary operator that anticommutes with $\JJ$  if and only if the operator $S\vv = U|_{\hil_{\JJ,\uI}^{+}}\vv\uJ$ satisfies \cref{CCUO2}. Since the a unitary operator $U$ that commutes or anticommutes with $\JJ$ induces a symmetry $h$ on $\QMLat(\VNAlg)$ if and only if the respective operator $S$ induces the symmetry $h_{\cc_{\uI}}$ determined by \eqref{SESCC} on $\QMLat(\hil_{\JJ,\uI}^{+})$, we find that two unitary operators $U$ and $U'$ that induce the same symmetry $h$ either both commute or both anticommute with $\JJ$. 

In the first case, the respective operators $S$ and $S'$ are simply the restrictions $S = U|_{\hil_{\JJ,\uI}^{+}}$ and $S' = U|_{\hil_{\JJ,\uI}^{+}}'$. We find due to Wigner's theorem that $U$ and $U'$ induce the same symmetry $h$ if and only if $S = S'\alpha$ with $\alpha\in\cc_{\uI}$ and $|\alpha| =1$, or equivalently
\[
(U'U^{-1})|_{\hil_{\JJ,\uI}^{+}} = S'S^{-1} = \alpha SS^{-1} = \alpha\id.
\]
Since the quaternionic linear extension of the multiplication $\alpha \id$ with the complex number $\alpha = \alpha_0 + \uI \alpha_1\in\cc_{\uI}$ on $\hil_{\JJ,\uI}^{+}$ to all of $\hil$ is the operator $\alpha_0 \id + \alpha_{1} J \in\Center{\VNAlg}$, we find that $U$ and $U'$ induce the same symmetry on $\QMLat(\VNAlg)$ if and only if $U'U^{-1} = \alpha_0 \id + \alpha_1\JJ$ with $\alpha_0,\alpha_1 \in\rr$ so that $\alpha_0^2 + \alpha_1^2 = 1$. But operators of this type are precisely the unitary operators in $\Center{\VNAlg}$. 

In the second case, in which $U$ and $U'$ anticommute with $\JJ$, we have that $S\vv = (U_{\cc_{\uI}}\vv)(-\uJ)$ and $S'\vv = (U_{\cc_{\uI}}'\vv)(-\uJ)$ for $\vv\in\hil_{\JJ,\uI}^{+}$ with $U_{\cc_{\uI}} := U|_{\hil_{\JJ,\uI}^{+}}$ and $U_{\cc_{\uI}} := U|_{\hil_{\JJ,\uI}^{+}}'$. Again Wigner's theorem implies that $U$ and $U'$ induce the same symmetry $h$ if and only if $S' = \alpha S$ with $\alpha\in\cc_{\uI}$ and $|\alpha| = 1$. This is equivalent to
\[
\alpha\id \vv = S^{-1}\circ S'(\vv) = S^{-1}\left((U_{\cc_{\uI}}'\vv)\uJ\right) = U_{\cc_{\uI}}^{-1}\left((U_{\cc_{\uI}}'\vv)\uJ(-\uJ)\right) = U_{\cc_{\uI}}^{-1}U_{\cc_{\uI}}'(\vv)
\]
for all $\vv\in\hil_{\JJ,\uI}^{+}$. Taking the quaternionic linear extension to all of $\hil$, we conclude as before that $U$ and $U'$ induce the same symmetry if and only if $U'U^{-1}\in\Center{\VNAlg}$.

\end{proof}

\section{Reduction of Quaternionic to Complex Relativistic Systems }\label{CStructExist}
We recall that the Poincar\'{e} group is the Lie-group of Minkowski-spacetime symmetries and we recall that a unitary representation of a Lie-group $G$ on a real, complex or quaternionic Hilbert space $\hil$ is a group homomorphism $h:G\to \boundOP(\hil)$ such that the mapping $(g,\vv) \mapsto h(g)\vv$ for $g\in G$ and $\vv\in\hil$ is continuous and such that $h(g)$ is unitary for all $g\in G$. Such representation is called locally faithful, if it is injective in a neighborhood of the neutral element of $G$.

An elementary relativistic system is an elementary quantum system that supports a representation $h$ of the Poincar\'{e} group $\poincare$ viewed as a maximal symmetry group of the system. Since the system should be the realisation of the physical symmetries, $h$ must contain all the information about the variables of the system and since the system should be elementary, the representation of $\poincare$ must be irreducible.

\begin{definition}\label{DefiRES}
A real, complex or quaternionic {\em relativistic elementary system} (RES for short) is an elementary quantum system $\VNAlg$ on a real, complex or quaternionic separable Hilbert space $\hil$ that is equipped with a representation of the Poincar\'{e} group $h:\poincare \to \aut(\QMLat(\VNAlg)), g\mapsto h_g$ which is locally faithful and satisfies the following requirements.
\begin{enumerate}[(i)]
\item\label{RES1} $h$ is irreducible, in the sense that if $E\in\QMLat(\VNAlg)$ then  $h_{g}(E) = E$ for all $g\in\poincare$  implies either $E = 0$ or $E = \id$. 
\item\label{RES2} $h$ is continuous in the sense that $g\mapsto \mu(h_g(E))$ is continuous for every fixed $E\in\QMLat(\VNAlg)$ and every fixed quantum state $\mu$. 
\item\label{RES3} $h$ defines the observables of the system. If we represent the symmetry $h_g$ for any $g\in\poincare$ by a unitary operator $U_g\in\VNAlg$ so that $h_g(E) = U_gEU_g^{-1}$, this condition reads as
\begin{equation}\label{h3Cond}
\QMLat(\VNAlg)\subset (\{U_g: g\in\poincare\}\cup\Center{\VNAlg})''.
\end{equation}
\end{enumerate}
\end{definition}
\begin{remark}
The above notion of relativistic elementary system was introduced in Definition~5.7 of \cite{Moretti:2017}. As the authors point out Remark~5.8, it coincides with the usual definition of relativistic elementary systems in the sense of Wigner if the quantum system is complex.
\end{remark}
\begin{remark}\label{hWD}
We implicitly assume in the above definition that we can represent any Poincar\'{e}-induced symmetry $h_g$ by a unitary operator in $\VNAlg$. This is obviously true for real quantum systems of real-real or real-quaternionic type and for real-induced or proper quaternionic quantum systems due to \Cref{VNT-real,VNT}. For the other cases, this follows from the polar decomposition $\poincare = SL(2,\cc)\ltimes\rr^4$ of the Poincar\'{e} group, which allows to write every $g\in P$ as the product $g = rrbb$, where $r$ is a spatial rotation and $b$ is a boost. Even if $\VNAlg$ is a real quantum system of real-complex type or a complex-induced quaternionic quantum system and $r$ or $b$ are represented by unitary operators $U_r$ and $U_b$ that do not belong to $\VNAlg$, then nevertheless $U_r^2$ and $U_b^2$ belong to $\VNAlg$ and so also their product $U_g = U_r^2U_b^2$ belong to $\VNAlg$. Similarly, if $\VNAlg$ is a complex quantum system and $U_r$ or $U_g$ are complex anti-unitary operators, then $U_r^2$ and $U_b^2$ and in turn also $U_g = U_r^2U_g^2$ are complex linear unitary operators in $\VNAlg$. Cf. also \cite[Remark~5.8]{Moretti:2017}.
\end{remark}

Our aim in this section is to show that any quaternionic RES is the external quaternionification of a complex RES, if the operator $M_U^2$ associated with the squared mass of the system is positive.  In order to construct this operator, we choose a Minkowskian reference frame in Minkowski space time and consider the one-parameter Lie-subgroups  $\mathfrak{p}_{\ell}:\rr\to\poincare, \ell = 0,\ldots,3$ of spacetime-displacements along the four Minkowskian axes. If $h:\poincare \to \boundOP(\hil)$ is a unitary representation of the Poincar\'{e} group such that $h_g(E) = U_gE U_g^{-1}$ for all $E\in\QMLat(\VNAlg)$, where $h: g\to h_g$ is the representation of $\poincare$ on $\aut(\QMLat(\VNAlg))$ in \Cref{DefiRES}, then $U_{\ell}(t) := h(\mathfrak{p}_{\ell}(t)) = U_{h_{\mathfrak{p}_{\ell}(t)}}$ is for any $\ell = 0,\ldots,3$ a strongly continuous group of unitary operators on $\hil$. We define $P_{\ell}$ as the infinitesimal generator of the group $U_{\ell}(t)$, which is a densely defined  anti-selfadjoint operator on $\hil$. The operator associated with the squared mass of the system is the operator
\[
M_U^2 := -P_0^2 + \sum_{\ell}^3 P_{\ell}^2,
\]
and we say that $M_U^2$ is positive if $\langle\vv, M_U^2\vv\rangle \geq 0$ for all $\vv\in \dom(M_U^2)$.

\begin{remark}
The above definition of $M_U^2$ is not very rigorous, in particular because it is not immediate that $\dom(M_U^2) = \bigcap_{\ell=0}^3 \dom(P_{\ell})$ is nonempty. A common core for these operators, which is even dense in $\hil$, is the G\aa rding space. It consists of all vectors $\vv_{f}\in\hil$ generated by
\[
\vv_{f} := \int_{\poincare}f(g)\,U_g(\vv)\,dg\quad\text{with}\quad f\in C_{0}^{\infty}(\poincare,\rr),
\]
where $dg$ denotes the left-invariant Haar measure on $\poincare$. The construction of $M_U^2$ for a real RES in \cite{Moretti:2017} is based on a representation of the Lie algebra of $\poincare$ in terms of operators on the G\aa rding space. We can follow the same procedure in the quaternionic setting. Since the properties of the G\aa rding space depend only on the topology on $\hil$ and not on the field of scalars, we can simply choose $\uI\in\SS$ and consider the quaternionic Hilbert space $\hil$ as a complex Hilbert space $\hil_{\uI}$ over $\cc_{\uI}$ . We then obtain the results concerning well-definedness, density etc. of the G\aa rding space and the existence of a representation of the Lie algebra of $\poincare$ in terms of operators on this space simply by applying the complex results on $\hil_{\uI}$. (This is similar to \cite{Moretti:2017}, where the proofs of the properties of the G\aa rding space in the case $\hil$ is a real Hilbert space consist essentially in applying the results from the complex case to the external complexification of the real space $\hil$.) We do not recall these arguments in detail since this would go beyond the scope of this paper.
\end{remark}
Finally, we recall Theorem~5.11 in \cite{Moretti:2017}, which shows that any real quantum system is of real-complex type, provided that the operator $M_U^2$ associated with the squared mass of the system is positive. 

\begin{theorem}\label{TMP1}
Let $\VNAlg$ be a real RES on a real Hilbert space $\hil$ and let $h:\poincare\to\boundOP(\hil)$ be the locally-faithful strongly-continuous unitary representation of the Poincar\'{e} group on $\hil$. If the operator $M_U^2$ that is associated with the squared mass of the system is positive, then the following facts hold.
\begin{enumerate}[(i)]
\item The von Neumann-algebra $\VNAlg$ is of real-complex type and so $\VNAlg = \boundOP(\hil_{\JJ})$, where $\hil_{\JJ} := (\hil, \langle\cdot,\cdot\rangle_{\JJ})$ is the internal complexification of $\hil$ induced by the unitary and anti-selfadjoint operator $\JJ$ such that $\VNAlg' = \{ a\id + b\JJ: a,b\in\rr\}$, which is unique up-to-sign.
\item The representation $h$ of $\poincare$ is irreducible on $\hil_{\JJ}$ and defines a complex RES that is equivalent to $\VNAlg$.
\item The operator $\JJ$ is a Poincar\'{e} invariant and coincides up to sign with the unitary factor of the polar decomposition of the anti-selfadjoint generator of the group of temporal translations, that is  either $P_0 = \JJ|P_0|$ or  $P_0 = -\JJ|P_0|$.
\end{enumerate}
\end{theorem}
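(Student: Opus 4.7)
The strategy is to exploit the classification in \Cref{VNT-real} and construct the required $\JJ$ explicitly from the polar decomposition of the time-translation generator $P_0$. The positivity hypothesis $M_U^2 \geq 0$ will play a double role: it will force $\ker P_0 = \{0\}$, ensuring that the polar factor of $P_0$ is a genuine unitary, and it will guarantee that this polar factor is Lorentz-invariant. Once the candidate $\JJ$ has been produced and shown to lie in $\Center{\VNAlg}$ with $\JJ^2 = -\id$, the real-real and real-quaternionic cases of \Cref{VNT-real} are excluded at once (both have trivial center), leaving only the real-complex case, from which (i), (ii), and (iii) follow.

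\emph{Constructing $\JJ$.} Since $U_0(t)\in\VNAlg$ for every $t\in\rr$ by \Cref{hWD}, the anti-selfadjoint generator $P_0$ is affiliated to $\VNAlg$, and its polar factor $V$ in $P_0 = V|P_0|$ is a bounded element of $\VNAlg$ satisfying $V^* = -V$ and $V^2 = -\id$ on the orthogonal complement of $\ker P_0$. The first key step is to show $\ker P_0 = \{0\}$. If $\vv\in\ker P_0$, then $0 \leq \langle\vv,M_U^2\vv\rangle = -\|P_0\vv\|^2 + \sum_{\ell=1}^{3}\langle\vv,P_\ell^2\vv\rangle = -\sum_{\ell=1}^{3}\|P_\ell\vv\|^2$, so $P_\ell\vv = 0$ for $\ell=1,2,3$ as well. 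Hence $\vv$ is fixed by the entire translation subgroup of $\poincare$. Since translations form a normal subgroup, the orthogonal projection $E_W$ onto the closed subspace $W$ of translation-invariant vectors is $h_g$-invariant for every $g\in\poincare$, and it lies in $\QMLat(\VNAlg)$ as a strong limit of averages of translation unitaries belonging to $\VNAlg$. The irreducibility clause \ref{RES1} then forces $E_W\in\{0,\id\}$, and the alternative $E_W=\id$ would collapse $h$ on translations, contradicting local faithfulness. Thus $W=\{0\}$, $V$ is unitary, and we set $\JJ := V$.

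\emph{Poincaré invariance and conclusion.} To place $\JJ$ in $\VNAlg'$ it suffices, by the observable-generation condition \eqref{h3Cond}, to verify that $\JJ$ commutes with every $U_g$, $g\in\poincare$, and with every central element. Commutation with $U_0(t)$ is functional calculus for $P_0$, and commutation with the spatial translation unitaries follows because the four $P_\ell$ pairwise commute. The delicate case is that of boosts and rotations: under a Lorentz transformation $\Lambda$, the operator $U_\Lambda P_0 U_\Lambda^{-1}$ becomes a linear combination of all four $P_\ell$ lying on the same mass shell as $P_0$, and the positivity of $M_U^2$ is precisely the spectral condition ensuring that the sign of the time-component is preserved by every $\Lambda$ in the connected Poincaré group, whence the polar factor $\JJ$ is invariant under conjugation by every $U_g$. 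This establishes $\JJ\in\VNAlg'$, and combined with $\JJ\in\VNAlg$ we obtain $\JJ\in\Center{\VNAlg}$ with $\JJ^2 = -\id$. Since $\Center{\VNAlg} = \rr\id$ in both the real-real and real-quaternionic types by \Cref{VNT-real}\ref{VNT-real1},\ref{VNT-real3}, only the real-complex type is compatible, whence $\VNAlg' = \{a\id + b\JJ : a,b\in\rr\}$, giving (i). Statement (iii) is the construction of $\JJ$, the two signs reflecting the freedom $V \leftrightarrow -V$ allowed by \Cref{VNT-real}\ref{VNT-real2}. For (ii), the commutation of each $U_g$ with $\JJ$ allows $h$ to descend to the internal complexification $\hil_{\JJ}$, and irreducibility on $\hil_{\JJ}$ is inherited from irreducibility on $\hil$ because every closed $\cc$-invariant subspace of $\hil_{\JJ}$ is a closed $\rr$-invariant subspace of $\hil$. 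The main obstacle is the Poincaré invariance of $\JJ$ under boosts and rotations, where the mass-shell positivity of $M_U^2$ is truly essential; the other steps are either direct functional calculus or standard consequences of the classification \Cref{VNT-real}.
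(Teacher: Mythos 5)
Be aware first that the paper contains no proof of this statement: \Cref{TMP1} is imported verbatim from Theorem~5.11 of \cite{Moretti:2017}, and the text immediately after it says explicitly that the proof ``requires quite lengthy physical arguments'' which the author declines to replicate. There is therefore no in-paper argument to compare yours against; your proposal has to stand on its own as a reconstruction of the Moretti--Oppio proof. As such, your overall strategy is the right one---take $\JJ$ to be the polar factor of $P_0$, show it is a unitary anti-selfadjoint operator commuting with everything relevant, and let the classification of \Cref{VNT-real} exclude the real-real and real-quaternionic types. The kernel argument is essentially sound modulo a sign slip: since $P_0$ is anti-selfadjoint, $\langle\vv,-P_0^2\vv\rangle = +\|P_0\vv\|^2$, not $-\|P_0\vv\|^2$; on $\ker P_0$ this is harmless and your conclusion $P_\ell\vv=0$ survives.

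The genuine gap is the step you yourself flag as ``the main obstacle'': invariance of the polar factor under boosts and rotations. You assert that positivity of $M_U^2$ ``is precisely the spectral condition ensuring that the sign of the time-component is preserved,'' but on a \emph{real} Hilbert space there is no joint spectral measure of the four commuting anti-selfadjoint generators to which that sentence could refer. Making it precise is exactly where the lengthy work of \cite{Moretti:2017} lives: one must control the Lie-algebra representation on the G\aa rding domain, integrate it via Nelson's theorem after complexifying, show that the support of the resulting joint spectral measure lies in the closed light cone and meets $\{p_0=0\}$ only at the origin (which carries no weight, by your kernel argument), and only then translate Lorentz invariance of the two half-cones into invariance of the partial isometry $V$. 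Your proposal replaces all of this with a one-sentence physical gloss, so the central claim is asserted rather than proved. A second, smaller gap: from ``$\JJ$ commutes with every $U_g$ and with $\Center{\VNAlg}$'' and \eqref{h3Cond} you only obtain $\JJ\in\QMLat(\VNAlg)'$, and in the real setting $\QMLat(\VNAlg)''$ may be a proper subalgebra of $\VNAlg$ (\Cref{VNAlgThm}), so $\JJ\in\VNAlg'$ does not follow formally; the appeal to commutation with $\Center{\VNAlg}$ is also mildly circular, since the center is what the classification is meant to determine. This is repairable---in the real-real case $\QMLat(\VNAlg)'=\{a\id:a\in\rr\}$ and in the real-quaternionic case it is the algebra of left multiplications, and in either case $\JJ\in\VNAlg\cap\QMLat(\VNAlg)'$ with $\JJ^2=-\id$ is contradictory---but the argument as written skips this computation.
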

\begin{remark}\label{TMP2}
Observe that this theorem in particular implies that a real RES can neither be of real-real nor of real-quaternionic type.
\end{remark}
The proof of the above theorem requires quite lengthy physical arguments. Instead of replicating them in the quaternionic case in order to show that any quaternionic RES is equivalent to a complex one, we therefore first show that any such system is equivalent to a real RES and then apply the above theorem.
\begin{theorem}
Let $\VNAlg$ be a quaternionic RES on a quaternionic Hilbert space $\hil$ and let $h:\poincare\to\boundOP(\hil)$ be a locally-faithful strongly-continuous unitary representation of the Poincar\'{e} group as in \Cref{DefiRES}. If the operator $M_U^2$ associated with the squared mass of the system is positive, then the following facts hold.
\begin{enumerate}[(i)]
\item\label{al1} The von Neumann-algebra $\VNAlg$ is  complex induced and so $\VNAlg$ is the external quaternionification of $\boundOP(\hil_{\JJ,\uI}^{+})$, where $\uI\in\SS$ and $\JJ$ is the unitary and anti-selfadjoint operator $\JJ$ such that $\VNAlg' = \{ a\id + b\JJ: a,b\in\rr\}$ that is unique up-to-sign.
\item\label{al2} The representation $g$ of $\poincare$ is irreducible on $\hil_{\JJ,\uI}^{+}$ and defines a complex RES that is equivalent to $\VNAlg$.
\item\label{al3} The operator $\JJ$ is a Poincar\'{e} invariant and coincides up to sign with the unitary factor of the polar decomposition of the anti-selfadjoint generator of the group of temporal translations, that is either $P_{0} = \JJ |P_0|$ or $P_{0} = -\JJ |P_0|$. 
\end{enumerate}
\end{theorem}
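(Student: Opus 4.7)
The plan is to reduce to \Cref{TMP1} via the classification of irreducible quaternionic von Neumann algebras in \Cref{VNT}. That theorem gives three mutually exclusive cases for $\VNAlg$: proper quaternionic, complex-induced, or real-induced. I would show that the proper quaternionic and real-induced cases contradict \Cref{TMP1} applied to an associated real RES, thereby leaving only the complex-induced case, which is exactly \cref{al1}.

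For the proper quaternionic case, suppose $\VNAlg = \boundOP(\hil)$. View $\hil$ as a real Hilbert space with inner product $\Re\langle\cdot,\cdot\rangle_\hil$. A direct computation shows that the real commutant of $\VNAlg$ equals the four-dimensional algebra of right multiplications $\{R_a : a\in\hh\}\cong\hh$, where $R_a(\vv) := \vv a$. Hence $\VNAlg$ viewed as a real von Neumann algebra is real-irreducible and of real-quaternionic type in the sense of \Cref{VNT-real}\cref{VNT-real3}. The Poincar\'e representation $h$ together with this real von Neumann algebra satisfies the RES axioms: local faithfulness and strong continuity are topological and unchanged, the lattice $\QMLat(\VNAlg) = \QMLat(\hil)$ is the same set of projections as in the quaternionic picture, and the double-commutant condition \eqref{h3Cond} transfers because the relevant operators lie in $\VNAlg$. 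Positivity of $M_U^2$ transfers since $\Re\langle\vv, M_U^2\vv\rangle_\hil = \langle\vv, M_U^2\vv\rangle_\hil \geq 0$. Applying \Cref{TMP1} would then force the real system to be of real-complex type (cf.\ \Cref{TMP2}), contradicting real-quaternionic.

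For the real-induced case, suppose $\VNAlg \cong \boundOP(\hil_{\rr})$ in the notation of \Cref{VNT}\cref{VNT3}. Then $U_g$, the infinitesimal generators $P_0,\ldots,P_3$, and $M_U^2$ all lie in $\VNAlg$ and are therefore quaternionic-linear extensions of real-linear operators on $\hil_{\rr}$. Restricting everything to the real Hilbert subspace $\hil_{\rr}$ gives a real RES with von Neumann algebra $\boundOP(\hil_{\rr})$ of real-real type and positive squared mass, and a second application of \Cref{TMP1} forces real-complex type, again a contradiction. With these two cases excluded, $\VNAlg$ must be complex-induced as in \Cref{VNT}\cref{VNT2}, establishing \cref{al1}.

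To obtain \cref{al2} and \cref{al3}, I restrict the system to $\hil_{\JJ,\uI}^{+}$. Since $\JJ \in \Center{\VNAlg}$ commutes with every $U_g$, the Poincar\'e representation leaves $\hil_{\JJ,\uI}^{+}$ invariant, and the restricted von Neumann algebra $\boundOP(\hil_{\JJ,\uI}^{+})$ is of real-complex type when viewed on $\hil_{\JJ,\uI}^{+}$ as a real Hilbert space. A third application of \Cref{TMP1} yields an up-to-sign unique anti-selfadjoint unitary operator on $\hil_{\JJ,\uI}^{+}$ which is forced to be $\pm$ multiplication by $\uI$, i.e.\ $\pm\JJ|_{\hil_{\JJ,\uI}^{+}}$, whose quaternionic linear extension to $\hil$ is the operator $\JJ$ of \Cref{VNT}\cref{VNT2}. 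The polar identity from \Cref{TMP1}\cref{al3} on $\hil_{\JJ,\uI}^{+}$ then lifts by quaternionic linearity to $P_0 = \pm\JJ |P_0|$ on $\hil$, giving \cref{al3}, and the irreducibility/equivalence statement of \Cref{TMP1}\cref{al2} gives \cref{al2}. The main obstacle I anticipate is the careful bookkeeping required to verify the RES axioms in each of the three real viewpoints, in particular showing that no Poincar\'e-invariant real projection outside the original quaternionic lattice appears on $\hil$ in the first step or on $\hil_{\JJ,\uI}^{+}$ in the last step, and confirming that the squared-mass operator retains its positivity under the restrictions of scalars and of the underlying space.
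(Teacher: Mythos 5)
Your proposal follows essentially the same route as the paper: exclude the proper-quaternionic case by passing to the realification of $\hil$ (where the commutant becomes the right scalar multiplications, giving a real RES of real-quaternionic type) and the real-induced case by restricting to $\hil_{\rr}$ (giving a real RES of real-real type), each contradicting \Cref{TMP1} and \Cref{TMP2}, so that only the complex-induced case survives. The only minor divergence is in part (iii), where the paper verifies that the restriction to $\hil_{\JJ,\uI}^{+}$ is a complex RES and cites the complex-case result (Theorem~4.3 of \cite{Moretti:2017}) directly rather than applying \Cref{TMP1} a third time to the realification of $\hil_{\JJ,\uI}^{+}$; both give the same conclusion, and the bookkeeping you flag (verifying the RES axioms, in particular the bicommutant condition \eqref{h3Cond}, in each real viewpoint) is exactly where the paper spends most of its effort.
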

\begin{proof}
Since the von Neumann-algebra $\VNAlg$ is by definition irreducible, it is due to \Cref{VNT} either real-induced, complex induced or proper quaternionic. 

We assume that $\VNAlg$ is real induced. In this case, there exist unitary and anti-selfadjoint operators $\II,\JJ,\KK := \II\JJ$ in $\boundOP(\hil)$ that anti-commute mutually such that 
\begin{equation}\label{JJUAL}
\VNAlg' = \{a\id + b\II + c\JJ + d\KK : a,b,c,d\in\rr\}.
\end{equation}
The von Neumann-algebra $\VNAlg$ is then the external quaternionification of the real von Neumann-algebra $\VNAlg_{\rr} := \VNAlg|_{\hil_{\rr}} = \boundOP(\hil_{\rr})$ of real-real type, which is obviously irreducible. Hence, $\VNAlg_{\rr}$ is an elementary real quantum system that is logically equivalent to $\VNAlg$. 
If $h$ is the locally faithful representation of the poincare group $\poincare$, then it induces a locally faithful representation $h_{\rr}: g\mapsto h_{\rr,g}$ of $\poincare$ on $\hil_{\rr}$ because $\QMLat(\VNAlg)$ and $\QMLat(\VNAlg_{\rr})$ are isomorphic lattices. Precisely, we have for all $E_{\rr}\in\QMLat(\VNAlg_{\rr})$ and all $g\in\poincare$ that
\begin{equation}\label{hpoin}
h_{\rr,g}(E_{\rr}) = h_{g}(E)|_{\hil_{\rr}},
\end{equation}
where $E$ is the quaternionic linear extension of $E_{\rr}$ to all of $E$ so that $E_{\rr} = E|_{\hil_{\rr}}$.  This representation is obviously irreducible: if $h_{\rr,g}(E_{\rr}) = E_{\rr}$ for all $g\in\poincare$ then \eqref{hpoin} implies for the projection $E\in\QMLat(\VNAlg)$ with $E_{\rr} = E|_{\hil_{\rr}}$ that $h_{g}(E) = E$ for all $g\in\poincare$. Due to the irreducibility of $h$, we find that either $E = 0$ or $E = \id_{\hil}$ and in turn also  either $E_{\rr} = E|_{\hil_{\rr}} = 0$ or $E_{\rr}  = E|_{\hil_{\rr}} = \id_{\hil_{\rr}}$. Similarly, any quantum state $\mu_{\rr}:\QMLat(\VNAlg_{\rr})\to [0,+\infty)$, is equivalent to a quantum state $\mu: \QMLat(\VNAlg)\to[0,+\infty)$ and we have
\[
\mu_{\rr}(E_{\rr}) = \mu(E),\quad\text{if}\quad E_{\rr} = E|_{\hil_{\rr}}.
\]
Due to the continuity of $h$, the mapping $g\mapsto \mu_{\rr}(h_{\rr,g}(E_{\rr})) = \mu(h_{g}(E))$ is continuous for any fixed $E_{\rr}\in\QMLat(\VNAlg_{\rr})$ and any quantum state $\mu_{\rr}$ on $\QMLat(\VNAlg_{\rr})$ and so $h$ is continuous. 

Finally, let $\mathfrak{U}_{\rr} := \{U_{\rr,g}: g\in\poincare\}$ be a set of unitary operators on $\hil_{\rr}$ such that $h_{\rr,g}(E_{\rr}) = U_{\rr,g}E_{\rr} U_{\rr,g}^{-1}$ for all $E_{\rr}\in\QMLat(\VNAlg_{\rr})$. The set 
\[
\frak{U} := \left\{ U_{g}\in\boundOP(\hil):U_{g}|_{\hil_{\rr}}\in\mathfrak{U}_{\rr}\right\}
\]
 of quaternionic linear extensions of operators in $\mathfrak{U}_{\rr}$ is then a set of unitary operators on $\hil$ such that $h_{g}(E) = U_{g} E U_{g}^{-1}$ for all $E\in\QMLat(\VNAlg)$ and all $g\in\poincare$. 

Since the operators $\II$, $\JJ$ and $\KK$ commute with any $U\in\mathfrak{U}$, they belong to $\mathfrak{U}'$. Any operator $A\in\mathfrak{U}''$  commutes therefore with $\II$, $\JJ$ and $\KK$ and is hence the quaternionic linear extension of an operator $A_{\rr}\in\boundOP(\hil_{\rr})$. If $B_{\rr}\in\mathfrak{U}_{\rr}'$, then its quaternionic linear extension $B$ to all of $\hil$ belongs to $\mathfrak{U}'$ and so we have $AB = BA$ for any $A\in\mathfrak{U}''$. Taking the restriction to $\hil_{\rr}$, we find that $A_{\rr}B_{\rr} = B_{\rr}A_{\rr}$ for any $A\in\mathfrak{U}''$ and any $B_{\rr}\in\mathfrak{U}_{\rr}'$ and so 
\[
\QMLat(\VNAlg_{\rr}) = \QMLat(\VNAlg)|_{\hil_{\rr}} \subset \mathfrak{U}''|_{\hil_{\rr}} \subset \mathfrak{U}_{\rr}''.
\]
Altogether, we find that $\VNAlg_{\rr}$ is a real RES of real-real type. The operator $M_{\rr,U}^2$ associated with the squared mass in the real quantum system $\VNAlg_{\rr}$ is the restriction of the operator $M_{U}^2$ associated with the squared mass  in the quaternionic quantum system $\hil$ to $\hil_{\rr}$, that is $M_{\rr,U}^2 = M_{U}^2|_{\hil_{\rr}}$. Moreover $M_{U}^2$ is positive if and only if $M_{\rr,U}^2$ is positive. We conclude from \Cref{TMP1} and \Cref{TMP2} that the quaternionic RES $\VNAlg$ is not real-induced if $M_{U}^2$ is positive.

If $\VNAlg$ is proper quaternionic, that is $\VNAlg = \boundOP(\hil)$, then we can argue similarly. We can consider the real Hilbert space $\hil_{\rr}:= \left(\hil,\langle\cdot,\cdot\rangle_{\rr}\right)$, which we obtain if we restrict the scalar multiplication on $\hil$ to $\rr$ and endow this space with the real scalar product $\langle\vu,\vv\rangle_{\rr}:=\Re \langle\vu,\vv\rangle$. If we consider the operators in $\VNAlg$ as $\rr$-linear operators on $\hil_{\rr}$, we find that $\VNAlg$ is a real Banach-subalgebra of $\boundOP(\hil_{\rr})$. Since both scalar products $\langle\cdot,\cdot\rangle$ and $\langle\cdot,\cdot\rangle_{\rr}$ generate the same topology on $\hil$, the set $\VNAlg$ is strongly closed not only as a sub-algebra of $\boundOP(\hil)$ but also as a sub-algebra of $\boundOP(\hil_{\rr})$ and hence it is a real von Neumann-algebra of operators on $\hil_{\rr}$. 

We show now that $\VNAlg$ is also irreducible as a subset of $\boundOP(\hil_{\rr})$ and hence assume that $K$ is a closed $\rr$-linear subspace of $\hil$ such that $T(K) \subset K$ for all $T\in\VNAlg$. The $\hh$-linear span 
\[
\tilde{K} = \linspan{\hh}K = \{\vv a: \vv\in K, a\in \hh\}
\]
 is then a closed quaternionic linear subspace of $\hil$. Since $T\vv \in K$ for any $\vv\in K$ and any $T\in\VNAlg$, we find $T(\vv a) = (T\vv) a \in\tilde{K}$ for any $\vv a \in \tilde{K}$ and any $T\in\VNAlg$, we find that $\tilde{K}$ is a closed quaternionic linear subspace of $\hil$ such that $T(\tilde{K})\subset \tilde{K}$ for any $T\in\VNAlg$. Since $\VNAlg$ is an irreducible von Neumann-algebra on $\hil$, we conclude that either $\tilde{K} = \{ \vO\}$ or $\tilde{K} = \hil$. In the first case, we immediately conclude $K = \{\vO\}$. In the second case, we have $\hil_{\rr} = \hil = \linspan{\hh}K = \{\vv a: \vv\in K, a\in \hh\}$, but not immediate that $\hil = K$.  Hence, let us assume that $K \neq \hil_{\rr}$. Then there exist $\vv \in K$ and $a\in\hh$ such that $\vv a\notin K$. (Without loss of generality, we can even assume $\| \vv \| = 1$.) The operator $T\vu := \vv a \langle \vv, \vu\rangle$ is then a bounded quaternionic right linear operator on $\hil$ and hence belongs to $\VNAlg$, but it does not leave $K$ invariant as $T\vv = \vv a \notin K$. We conclude that such $\vv$ cannot exist and so $\tilde{K} = \hil$ implies $K = \hil = \hil_{\rr}$. Altogether, we find that if a closed subspace $K$ of $\hil_{\rr}$ satisfies $T(K)\subset K$ for all $T\in\VNAlg$, then either $K = \{\vO\}$ or $K = \hil_{\rr}$. Hence, $\VNAlg$ is an irreducible real von Neuman-subalgebra of $\boundOP(\hil_{\rr})$ and therefore an elementary real quantum system on $\hil_{\rr}$ that is equivalent to the elementary quaternionic quantum system $\VNAlg$ on $\hil$. We denote this quantum system by $\VNAlg_{\rr}$ in order to stress that we consider the operators as $\rr$-linear operators on $\hil_{\rr}$. It is obviously of real-quaternionic type according to the classification in \Cref{VNT-real}.

Let $h: \poincare\to\aut(\QMLat(\VNAlg))$ be the locally faithful representation  satisfying condition \cref{RES1,RES2,RES3} in \Cref{DefiRES}. Since $\QMLat(\VNAlg) = \QMLat(\VNAlg_{\rr})$ and in turn $\aut(\QMLat(\VNAlg)) = \aut(\QMLat(\VNAlg_{\rr}))$, we find that $h$ is also a locally faithful representation of the Poincar\'{e} group on  $\aut(\QMLat(\VNAlg_{\rr}))$. We shall denote $h$ also by $h_{\rr}$, if we consider it as a representation $h:\poincare\to\aut(\QMLat(\VNAlg_{\rr}))$. Since $h$ is irreducible, we find that $h_{\rr,g}(E) = h_{g}(E) =  E$ for all $g\in\poincare$ implies either $E = 0$ or $E = \id$ for all $E \in \QMLat(\hil_{\rr}) = \QMLat(\hil)$ and hence also $h_{\rr}$ is irreducible. It obviously also inherits the property of being continuous from $h$. 

What remains to show in order for $\VNAlg_{\rr}$ to be a real RES is that $h_{\rr}$ determines the observables of $\VNAlg_{\rr}$. Let therefore $U_g\in\boundOP(\hil$) for $g\in\poincare$ be unitary operators so that $h_g(E) = U_gEU_g^{-1}$ and denote $\mathfrak{U}:=\{U_g:g\in\poincare\}$ and let us denote $U_g$ considered as an $\rr$-linear operator on $\hil_{\rr}$ by $U_{\rr,g}$. Then $U_{\rr,g}$ is an $\rr$-linear unitary operator on $\hil_{\rr}$ so that $h_{\rr,g}(E) = U_gEU_g^{-1}$. We denote $\mathfrak{U}_{\rr} = \{U_{\rr,g}:g\in\poincare\}$, that is $\mathfrak{U}_{\rr}$ equals $\mathfrak{U}$ where we consider its elements as operators on $\hil_{\rr}$ instead of $\hil$. Since $\VNAlg_{\rr}$ is a real von Neumann-algebra of real-quaternionic type, we have $\Center{\VNAlg_{\rr}} = \{a\id:a\in\rr\}$ by \Cref{VNT-real} and hence we need to show that
\[
\QMLat(\VNAlg_{\rr}) =\QMLat(\VNAlg) \subset (\mathfrak{U}_{\rr}\cup\Center{\VNAlg_{\rr}})'' =  \mathfrak{U}_{\rr}'',
\]
where $\mathfrak{U}_{\rr}''$ denotes the bicommutant of $\mathfrak{U}_{\rr} = \mathfrak{U}$ in $\boundOP(\hil_{\rr})$. We know that the bicommutant $\mathfrak{U}''$ of $\mathfrak{U}$ in $\boundOP(\hil)$ satisfies 
\[
\QMLat(\VNAlg) \subset (\mathfrak{U}\cup\Center{\VNAlg})'' = \mathfrak{U}'',
\]
as $\Center{\VNAlg} = \{a\id: a\in\rr\}$ by \Cref{VNT} because $\VNAlg$ is a proper quaternionic von Neumann algebra. Moreover, as $\VNAlg = \boundOP(\hil)$, we have $\QMLat(\VNAlg) = \QMLat(\hil) = \{a\id: a\in\rr\}$, cf. \cite[Lemma~5.16]{Moretti:2017}, and so
\[
\{a\id: a\in\rr\} = \QMLat(\VNAlg)' \supset (\mathfrak{U}'')' = \mathfrak{U}' \supset \{a\id:a\in\rr\}.
\]
Hence, $\mathfrak{U}' = \{a\id:a\in\rr\}$.

Let now $\uI,\uJ\in\SS$ with $\uI\perp\uJ$ and set $\uK :=\uI\uJ$. For any quaternionic $a\in\hh$, the operator $M_{a}\vv:= \vv a$ is a bounded $\rr$-linear operator on $\hil_{\rr} = \hil$ and hence it belongs to $\boundOP(\hil_{\rr})$.  Moreover, an arbitrary operator in $\boundOP(\hil_{\rr})$ is quaternionic right-linear if and only if it commutes with $M_{\uI}$, $M_{\uJ}$ and $M_{\uK}$. As $\mathfrak{U}_{\rr} = \mathfrak{U}$ consists of quaternionic linear operators, we find $M_{\uI},M_{\uJ},M_{\uK}\in\mathfrak{U}_{\rr}'$ and so
\begin{equation}\label{Uaps}
\{ M_{a}: a\in\hil\} = \{ a_0\id + a_1M_{\uI} + a_2 M_{\uJ}  + a_3M_{\uK}: a_{\ell}\in\rr\} \subset \mathfrak{U}_{\rr}'.
\end{equation}
If on the other hand $A\in\mathfrak{U}_{\rr}'$, then the operator
\begin{align*}
A_{\hh}\vv :=& A\vv - M_{\uI}AM_{\uI}\vv - M_{\uJ}AM_{\uJ}\vv - M_{\uK}AM_{\uK}\vv \\
=& A\vv - (A(\vv\uI))\uI - (A(\vv\uJ))\uJ - (A(\vv\uK)\uK)
\end{align*}
obviously also belongs to $\mathfrak{U}_{\rr}'$ because it consists of the sum of compositions of operators in $\mathfrak{U}_{\rr}'$. We moreover have
\begin{align*}
A_{\hh}(\vv\uI) = & A(\vv\uI) - (A(\vv\uI^2))\uI - (A(\vv\uI\uJ))\uJ - (A(\vv\uI\uK)\uK)\\
 = &\left( -A(\vv\uI)\uI + A(\vv) - (A(\vv\uK))\uK- (A(\vv\uJ)\uJ)\right)\uI = (A_{\hh}\vv)\uI
\end{align*}
and similarly also $A_{\hh}(\vv\uJ) = (A_{\hh}\vv)\uJ$ and $A_{\hh}(\vv\uK) = (A_{\hh}\vv)\uK$. Hence, $A_{\hh}$ is a quaternionic right linear operator in $\mathfrak{U}_{\rr}'$. Therefore it belongs to $\mathfrak{U}'$ and we conclude
\[
A_{\hh}(\vv\uI) = \vv a_0
\]
with $a_{0}\in\rr$. The operators $AM_{\uI}$, $AM_{\uJ}$ and $AM_{\uK}$ also belong to $\mathfrak{U}_{\rr}'$ because they are compositions of operators in $\mathfrak{U}_{\rr}'$. By the above argument, there exist real numbers $a_1$, $a_2$, and $a_3$ such that $(AM_{\uI})_{\hh} = a_1\id$, $(AM_{\uJ}) = a_2\id$, and $(AM_{\uK})_{\hh} = a_3\id$. Straightforward computations show that
\begin{align*}
A_{\hh}\vv = A(\vv) - (A(\vv\uI))\uI - (A(\vv\uJ))\uJ - (A(\vv\uK))\uK =& \vv a_0\\
((AM_{\uI})_{\hh}\vv)(-\uI) = - A(\vv\uI)\uI + A(\vv) + (A(\vv\uK))\uK + (A(\vv\uJ))\uJ =& -\vv a_1\uI\\
((AM_{\uJ})_{\hh}\vv)(-\uJ) = -(A(\vv\uJ))\uJ + (A(\vv\uK))\uK + A(\vv) + (A(\vv\uI))\uI =& -\vv a_2\uJ\\
((AM_{\uK})_{\hh}\vv)(-\uK) =  - A(\vv\uK)\uK + (A(\vv\uJ))\uJ + (A(\vv\uI))\uI + A(\vv) =& -\vv a_3\uK.
\end{align*}
If we add these four equations, we are left with
\[
4 A(\vv) = \vv a_0 - \vv a_1\uI - \vv a_2\uJ - \vv a_3\uK
\]
and hence $A = M_{a}$ with $ a = \frac{1}{4} (a_0  - a_1 \uI - a_2 \uJ - a_3 \uK)$. Hence, the relation in \eqref{Uaps} is actually an equality and $\mathfrak{U}_{\rr}' = \{ M_a: a\in\hh\}$. An operator $T\in\boundOP(\hil_{\rr})$ commutes with $M_a$ if and only if
\[
T(\vv a) = TM_a\vv = M_a T\vv = (T\vv)a
\]
and hence the set of operators that commute with  $M_a$ for any $a\in\hh$ is exactly the set of quaternionic linear operators in $\boundOP(\hil_{\rr})$. We conclude $\mathfrak{U}_{\rr}'' = \boundOP(\hil) = \mathfrak{U}''$ and so in particular $\QMLat(\VNAlg_{\rr}) = \QMLat(\VNAlg) \subset \mathfrak{U}'' = \mathfrak{U}_{\rr}''$.

Althogether, we find that $\VNAlg_{\rr}$ is a real RES of real-quaternionic type on $\hil_{\rr}$. However, if $M_{U}^2$ is the operator associated with the squared mass of the system in $\VNAlg$, then $M_{U}^2$ is also the operator associated with the squared mass of the system in $\VNAlg_{\rr}$ and it is positive on $\hil$ if and only if it is positive as an operator on $\hil_{\rr}$. Since the real RES cannot be of real-quaternionic type if $M_U^2\geq 0$, we conclude that positivity of $M_U^2$ implies that $\VNAlg$ is not proper quaternionic.

The von Neumann-algebra $\VNAlg$ must hence be complex induced. If $\JJ$ is the unitary anti-selfadjoint operator on $\hil$ such that $\VNAlg' = \{ a\id + \JJ b:a,b\in\rr\}$ and $\uI\in\SS$, then $\VNAlg$ is the external quaternionification of the $\cc_{\uI}$-complex von Neumann-algebra $\VNAlg_{\uI}:= \boundOP(\hil_{\JJ,\uI}^{+})$, which is obviously irreducible, and $\QMLat(\VNAlg)$ is the external quaternionification of and hence isomorphic to $\QMLat(\VNAlg_{\uI}) = \QMLat(\hil_{\JJ,\uI}^{+})$. The representation $h:\poincare\to\aut(\QMLat(\VNAlg))$ induces also in this case a representation $h_{\uI}:\poincare\to\aut(\QMLat(\VNAlg_{\uI}))$ of the Poincar\'{e} group, namely
\[
h_{\uI,g}(E_{\uI}) := h(E)|_{\hil_{\JJ,\uI}^{+}}\quad\text{if }E_{\uI} = E|_{\hil_{\JJ,\uI}^{+}}.
\]
The same arguments that we applied in the real-induced and proper quaternionic case show that $h_{\uI}$ inherits irreducibility and continuity from $h$. Finally, let $U_{\uI,g}$ for $g\in\poincare$ be a unitary operator on $\hil_{\JJ,\uI}^{+}$ so that  $h_{\uI,g}(E_{\uI}) = U_{\uI,g}E_{\uI} U_{\uI,g}^{-1}$ for all $E_{\uI}\in\QMLat(\VNAlg_{\uI})$ and set $\mathfrak{U}_{\uI} = \{ U_{\uI,g}:g\in\poincare\}$. The quaternionic linear extension $U_g$ of $U_{g,\uI}$ to all of $\hil$ is then a unitary operator on $\hil_{\JJ,\uI}^{+}$ such that $h_g(E) = U_{g}EU_{g}^{-1}$ for any $E\in\QMLat(\VNAlg)$. Since $\VNAlg$ is a complex-induced quaternionic RES, we have 
\[
\QMLat(\VNAlg)\subset (\mathfrak{U} \cup \Center{\VNAlg})'' = (\mathfrak{U} \cup \{\JJ\})''
\]
because $\Center{\VNAlg} = \{ a\id + b\JJ : a,b\in\rr\}$. An operator $A\in\boundOP(\hil)$ belongs to $(\mathfrak{U}\cap\{\JJ\})'$ if and only if it commutes with every operator $U\in\mathfrak{U}$ and with the operator $\JJ$. This is equivalent to $A$ being the quaternionic linear extension of an operator $A_{\uI} = A|_{\hil_{\JJ,\uI}^{+}}$ in $\boundOP(\hil_{\JJ,\uI}^{+})$ that commutes with the restriction $U_{\uI} = U|_{\hil_{\JJ,\uI}^{+}}$ of any $U\in\mathfrak{U}$, in other words to  $A_{\uI}$ being an element of $\mathfrak{U}_{\uI}'$. Therefore 
\begin{equation}\label{RRaSL}
(\mathfrak{U}\cup\Center{\VNAlg})' = (\mathfrak{U}_{\uI}')_{\hh} := \left\{ A\in\boundOP(\hil): A|_{\hil_{\JJ,\uI}^{+}}\in\mathfrak{U}_{\uI}'\right\}.
\end{equation}
In particular $\JJ\in (\mathfrak{U}\cup\Center{\VNAlg})'$ and so also any operator $A\in (\mathfrak{U}\cup\Center{\VNAlg})''$ is the quaternionic linear extension of an operator in $\boundOP(\hil_{\JJ,\uI}^{+})$. Therefore
\begin{equation}\label{AAMK2}
(\mathfrak{U}\cup\Center{\VNAlg})'' = \{ A\in\boundOP(\hil): A|_{\hil_{\JJ,\uI}^{+}}\in\mathfrak{U}_{\uI}''\}
\end{equation}
because two operators $A,B\in\boundOP(\hil)$ so that $A_{\uI} = A|_{\hil_{\JJ,\uI}^{+}}$ and $B_{\uI} = B|_{\hil_{\JJ,\uI}^{+}}$ belong to $\boundOP(\hil_{\JJ,\uI}^{+})$ commute if and only if $A_{\uI}$ and $B_{\uI}$ commute. Combining \eqref{RRaSL} and \eqref{AAMK2} and taking the restrictions to $\hil_{\JJ,\uI}^{+}$, we obtain
\[
\QMLat(\VNAlg_{\uI}) = \left\{ E|_{\hil_{\JJ,\uI}^{+}} : E \in\QMLat(\VNAlg)\right\} \subset \left\{ A|_{\hil_{\JJ,\uI}^{+}}: A\in(\mathfrak{U}\cup\{\JJ\})''\right\} = \mathfrak{U}_{\uI}''
\]
and so $\VNAlg_{\uI}$ is a complex RES, which concludes the proofs of \cref{al1} and \cref{al2}. 

The operator $\JJ$ is Poincar\'{e} invariant because it commutes with every operator in $\VNAlg$ and so in particular with $U_g$ for any $g\in\poincare$. Hence $U_g \JJ U_g^{-1} = \JJ U_gU_g^{-1} = \JJ$. Finally, if $P_0$ is the infinitesimal generator of the unitary group of temporal translations in $\VNAlg$ and $P_0 = J |P_0|$ is the polar decomposition of $P_0$, then $P_{0,\uI} := P_{0}|_{\hil_{\JJ,\uI}^{+}}$ is the infinitesimal generator of the unitary group of temporal translations in $\VNAlg_{\uI}$ and its polar decomposition is given by $P_{0,\uI} = J_{\uI} |P_{0,\uI}|$ with $J_{\uI} := J|_{\hil_{\JJ,\uI}^{+}}$ and $|P_{0,\uI}| = |P_{0}||_{\hil_{\JJ,\uI}^{+}}$. Since $\VNAlg_{\uI}$ is a complex RES, we have by Theorem~4.3 in \cite{Moretti:2017} however that either 
\[
J_{\uI} = \uI\id_{\hil_{\JJ,\uI}^{+}} = \JJ|_{\hil_{\JJ,\uI}^{+}}\quad \text{or} \quad J_{\uI} = -\uI\id_{\hil_{\JJ,\uI}^{+}} = -\JJ|_{\hil_{\JJ,\uI}^{+}}
\]
and so in turn $J = \JJ$ or $J = -\JJ$. 

\end{proof}
\begin{remark}
If we set $\tilde{H} := cP_0$, where $c$ is the speed of light, then the polar decomposition of $\tilde{H}$ is $\tilde{H} = \JJ H$ with $ H = |\tilde{H}| = c|P_0|$. As pointed on in \cite[p.~34]{Moretti:2017}, the operator $H$ can then be interpreted as a the energy operator, that is the Hamiltonian, of the system.
\end{remark}

\section{The Fundamental Logical Mistake in Quaternionic Quantum Mechanics}
The above results suggest that quaternionic quantum mechanics is actually equivalent to classical complex quantum mechanics despite the fact that researchers in this field claimed the incompatibility of the two theories \cite{Adler:1995}. This misconception is caused by one fundamental logical mistake that was made in quaternionic quantum mechanics  from its very beginning in the foundational paper \cite{Finkelstein:1962}. It is the assumption that there exists a privileged left multiplication that is compatible with the physical theory. 

We recall that the initial motivation for developing a quaternionic version of quantum mechanics was the fact that the propositional calculus of a quantum system consists of an orthomodular lattice, which can be realised as a lattice of subspaces on such Hilbert space \cite{Birkhoff:1936}. A left multiplication is however not determined by the calculus. Indeed, the argument in \cite{Finkelstein:1962} for the existence of a privileged left multiplication is not correct. The authors argue that the physical properties of a quantum system should not depend on the concrete realisation of the number field that one uses so that the system should be invariant under automorphisms of the number field. If one considers a quantum system on a Hilbert space $\hil$ over $\mathbb{F}$, where $\mathbb{F}$ is one of the fields $\rr$, $\cc$ or $\hh$, and $\phi$ is an automorphism of the $\mathbb{F}$, then there exists a an associated co-unitary transformation of $\hil$, that is an additive mapping $U_{\phi}:\hil\to\hil$ such that
 \begin{equation}\label{RMQQ}
U_{\phi}(\vv a) = U_{\phi}(\vv)\phi(a) \qquad\text{and}\qquad  \langle(U_{\phi}(\vv),U_{\phi}(\vu)\rangle= \phi(\langle\vv,\vu\rangle).
 \end{equation}
   All laws of the quantum system must be covariant under the transformation $U_{\phi}$. Any automorphism $\phi$ of $\hh$ is however of the form $\phi(x) = hxh^{-1}$ with $h\in\hh$ and $|h| = 1$. The authors hence argue that the mapping $\vv\mapsto U_{\phi}(\vv) h$ is then quaternionic right linear because
 \[
 U_{\phi}(\vv a)h = U_{\phi}(\vv)\phi(a)h = U_{\phi}(\vv)hah^{-1} h = U_{\phi}(\vv)h a
\]
and hence they define a left multiplication on $\hil$ via
\[
h\vv := U_{\phi}(\vv)h.
\]
However, the co-unitary mapping $U_{\phi}$ is not well-defined. Any unitary operator $U$ that induces a symmetry of the system defines via $\vv \mapsto U\vv h^{-1}$ a co-unitary mapping that satisfies \eqref{RMQQ}. If we choose the same symmetry $U$ for any automorphism $\phi$, then
\[
h\vv = U_{\phi}(\vv)h = U\vv h^{-1}h = U\vv
\]
for any $\phi$ so that $h\vv$ is independent of $h$. We could even choose $U = \id$, so that we define $h\vv = \vv$, which is obviously nonsense.

We conclude this chapter with an examples of a seeming inconsistency between the complex and the quaternionic theory that arises from the assumption of the existence of a left multiplication. This inconsistency is however resolved if only the existence of a compatible unitary and anti-selfadjoint operator $\JJ$ that commutes with any observable and the unitary group of time translations is assumed. We point out that also other discrepancies such as the difficulty of defining a proper momentum operator, showing Heisenberg's uncertainty principle or developing a framework for composite systems can be resolved if the existence of such operator $\JJ$ is assumed. In order to explain the discrepancy we want to resolve, we quickly recall the main concepts of quaternionic quantum mechanics introduced in \cite{Adler:1995}. We shall furthermore adopt the Bra-Ket-notation used by physicists in order to make the comparison for the reader easier.

Quaternionic quantum mechanics is in \cite{Adler:1995} formulated on an abstract Hilbert space $\hil$ over the quaternions $\hh = \{ x_0 + \uI x_1 + \uJ x_2 + \uK x_3: x_{\ell}\in\rr\}$  consisting of ket-vectors $|f\rangle$, where the same vector considered as an element of the dual space via the Riesz-representation theorem is denoted by the bra-vector $\langle f|$. (The dimension of $\hil$ is assumed to be greater than two in order for Wigner's theorem to hold.) The scalar product on $\hil$ is denoted by $\langle g| f\rangle$ and (pure) states of the system correspond to one-dimensional rays of the form $|f\omega\rangle$ with $\omega \in\hh$. Observables are self-adjoint operators $A$ on $\hil$, that is $A^{\dagger} = A$ where $A^{\dagger}$ denotes the adjoint. Any observable $A$ has a representation of the form $A = \sum |a\rangle a \langle a|$ in terms of an orthonormal eigenbasis of eigenvector $|a\rangle$ associated with eigenvalues $a$ and the expectation value of measuring the observable $A$ if the system is in the state $|f\rangle$ is given by $\langle f| A |a\rangle = \sum_{a} \langle f|a\rangle a \langle a| f\rangle  = \sum a |\langle a|f\rangle|^2$. Symmetries of the systems are unitary operators on $\hil$. A continuous one-parameter group of symmetries is of the form $U(t) = e^{tA}$, where $A$ is an anti-selfadjoint operator satisfying $A^{\dagger} = -A$. The eigenvalues of such $A$ are (equivalence classes $[a]$ of) purely imaginary quaternions and the eigenspaces associated with different eigenvalues are mutually orthogonal. Choosing for any eigenvalue the representative $a$ that belongs to the upper complex halfplane $\cc_{\uI}^{\geq}$, we can find a representation of $A$ of the form 
\[
A = \sum |a\rangle a \langle a| = \sum |a\rangle \uI |a|\langle a|.
\]
We can furthermore set $\II_{A} := \sum |a\rangle \uI \langle a|$ and $|A| = \sum |a\rangle |a| \langle a| $ and find $A = \II_{A} |A|$, which corresponds to the polar decomposition of $A$. 
 If we set $\JJ_{A} := \sum |a\rangle \uJ \langle a|$ and $\KK_{A} := \sum |a\rangle \uK \langle a|$, then we obtain a left multiplication that commutes with $|A|$ and we can write any state $|f\rangle$ in terms of its four real components 
 \[
 |f\rangle = |f_0\rangle + \II_{A} |f_1\rangle + \JJ_{A}|f_2\rangle + \KK_{A} |f_3\rangle =  |f_0\rangle + |f_1\rangle \uI+ |f_2\rangle\uJ + |f_3\rangle\uK.
 \] 
 (Note however that only $\II_{A}$ is determined by $A$, the operators $\JJ_A$ and $\KK_A$ depend on the eigenbasis of $A$ that we choose.) 
 
 We consider the position operator $X$ that has a (continuous) eigenbasis $|x\rangle$ such that $X|x\rangle = |x\rangle x$ on $\hil$ and such that
 \[
 \id = \int dx^3 \, |x\rangle \langle x|.
 \] 
 Adler uses this position operator in order to define the quaternion-valued wave function associated with a state $|f\rangle$ as
 \[
 f(x) = \langle x | f \rangle
 \]
and finds that
 \[
 \langle g,f\rangle = \int dx^3\, \langle g|x\rangle \langle x| f\rangle = \int dx^3\, \overline{g(x)}f (x).
 \]
 We assume further more the left multiplication on $\hil$ to be the left multiplication induced by 
 \begin{equation}\label{LMultiAdler}
 \II := \II_{x} = \int |x\rangle \uI \langle x|, \quad \JJ := \JJ_{x} = \int |x\rangle \uJ \langle x|,\quad\text{and}\quad \KK := \KK_{x} = \int |x\rangle \uK \langle x|
 \end{equation}
  and find that this corresponds to the natural pointwise multiplication of the wave function, that is $(af)(x) = \langle  x|af\rangle = a(f(x))$ for all $a\in\hh$. We once more stress that this choice is however made by the author because it is convenient when working with wave functions, but it is not determined by the physical system and hence does not carry physical information.

 The time evolution of the system is described by symmetries $U(t,  \delta t)$ mapping the state of the system at time $t$ to the state of the system at time $t+\delta t$.  We define $-H$ to be the first coefficient of the Taylor series expansion of $U(t,\delta t) = \id + \delta t (-H)$, that is
 \[
 U(t,\delta t)|f(t)\rangle = \id |f(t)  - H |f(t)\rangle + o(\delta t^2)
 \]
 Together with 
 \[
|f(t + \delta t)\rangle =  | f(t)\rangle + \delta t  \frac{\partial}{\partial t}|f(t)\rangle + o(\delta t^2),
\]
we find the dynamics of the system being described by the  Schr\"{o}dinger equation
\begin{equation}\label{Hamiltonian}
 \frac{\partial}{\partial t} |f(t)\rangle = - H(t)|f(t)\rangle.
\end{equation}
An equation of this type is however only possible as long as we stick with a certain representative of the ray that describes the physical state. If we consider a general ray representative $|f(t)\omega_{f}(t)\rangle$ with a quaternionic phase $\omega_f(t) \in \hh$ and $|\omega_f(t)| = 1$, then the corresponding dynamical equation is
\begin{equation}\label{QuatPhase}
\frac{\partial}{\partial t}|f(t)\omega_{f}(t)\rangle =  - H(t)|f(t)\omega_f(t)\rangle  + |f(t)\omega_f(t)\rangle h_f(t) 
\end{equation}
with 
\[
h_f(t) = \omega_f(t)\omega_f'(t) = \overline{\omega_f(t)}\omega_f'(t).
\]
If we differentiate $\overline{\omega_f(t)}\omega_f(t) = 1$, we find that $h_f(t) = \overline{\omega_f(t)}\omega_f'(t) = -\overline{\omega_f'(t)}\omega_f(t) = - \overline{h_f(t)}$ and hence $h_f(t)$ is a purely imaginary quaternion. In contrast to the complex case, it can hence not be commuted with $\omega_f(t)$ in order to integrate it into a modified Hamiltonian and in order to obtain again an equation of the form \eqref{Hamiltonian}. 

Adler finally argues in \cite[pp. 41]{Adler:1995} that complex and quaternionic quantum mechanics are inequivalent because they have different transition probabilities and vectors that represent the same state in quaternionic quantum mechanics can be orthogonal and hence represent different states in complex quantum mechanics. He writes the wave function 
\[
f(x) = \langle x, f\rangle =  f_0(x) + f_1(x) \uI + f_2(x) \uJ + f_3(x)\uK
\]
of a state $|f(x)\rangle$ in position coordinates, in terms of two symplectic components as
\[
f(x) = F_1(x) + \uJ F_2(x)
\]
with 
\[
 F_1(x) = f_0(x) + f_1(x) \qquad F_2(x) = f_2(x) - f_3(x)\uI.
\]
He then assumes the Hamiltonian $H$ to be written in terms of real components 
\[
H = H_0 + H_1\II + H_2\JJ+ H_3\KK = H_0 + \uI H_1 + \uJ H_2 + \uK H_3.
\] 
(Note that this implicitly assumes that $H$ is the quaternionic linear extension of an $\rr$-linear operator from the real component space to $\hil$.) The coordinate representation of $H$ in the position basis is then
\[
H_{\ell}(x) \langle x| = \langle x | H_{\ell} \qquad \ell \in\{0,\ldots,3\}.
\]
If we define $\mathcal{H}_{1}(x) = H_{0}(x) + \uI H_1(x)$ and $\mathcal{H}_{2}(x) := H_2(x) - \uI H_3(x)$ and the matrix-valued function
\[
\mathcal{H}(x) := \left(\begin{array}{cc} \mathcal{H}_{1}(x) &  - \overline{\mathcal{H}_2(x)} \\ \mathcal{H}_2(x) & \overline{\mathcal{H}_1(x)}\end{array}\right),
\]
then the Schrödinger equation can be rewritten as
\[
\frac{\partial}{\partial t} \begin{pmatrix} F_1(x,t) \\ F_2(x,t) \end{pmatrix} = \mathcal{H}(x) \begin{pmatrix} F_1(x,t) \\ F_2(x,t) \end{pmatrix}.
\]
Endowing the space of all component functions with the usual scalar product on the direct sum $L^2(\cc_{\uI})\oplus L^2(\cc_{\uI})$, namely
\begin{equation}\label{ComplScal}
\langle f(x), g(x)\rangle_{\cc} := \int  \overline{F_1(x)}G_1(x) + \overline{F_2(x)}G_2(x)\, dx^3, 
\end{equation}
we find that this describes a complex quantum system since the only imaginary unit that appears in the above equations is the unit $\uI$. The units $\uJ$ and $\uK$ disappeared. 

Adler argues now that this system is inequivalent to the original quaternionic linear system. The quaternionic scalar product of the states $|f\rangle$ and $|g\rangle$ resp. of their wave functions $f(x)$ and $g(x)$ is
\[
\langle f(x), g(x)\rangle = \int \overline{f(x)} g(x) \,dx^3 = \langle f(x), g(x)\rangle_{\cc} + \uJ \langle f(x),g(x)\rangle_{\mathcal{S}},
\]
where $\langle f(x), g(x)\rangle_{\cc}$ is as in \eqref{ComplScal} and $\langle f(x),g(x)\rangle_{\mathcal{S}}$ is the symplectic scalar product 
\[
\langle f(x),g(x)\rangle_{\mathcal{S}} := \int F_1(x) G_2(x) - F_2(x)G_1(x)\,dx^3.
\]
Therefore the transition probabilities are 
\[
| \langle f(x), g(x)\rangle_{\cc}|^2
\]
 in the complex, but
\[
|\langle f(x), g(x)\rangle|^2 = | \langle f(x), g(x)\rangle_{\cc} |^2+  |\langle f(x),g(x)\rangle_{\mathcal{S}}|^2
\] in the quaternionic system. Furthermore, an orthonormal basis  $|h_{\ell}\rangle$ of the quaternionic system is not complete in the complex one. Instead one has to extend it and consider the set $|h_{\ell}\rangle, |h_{\ell}\uJ\rangle$ in order to obtain an eigenbasis of the complex system. In particular, $|h_{\ell}\rangle$  and $|h_{\ell}\uJ\rangle$ belong to the same ray and hence they represent the same state in the quaternionic system. However, they are orthogonal in the complex system and hence represent different states (in general even associated with different eigenvalues) in the complex system. 

Adler was obviously right, these two systems are not equivalent. However, as we know from our preceding analysis, he compared the wrong systems. The quaternionic system cannot be equivalent to the system that we obtain if we consider the entire quaternionic Hilbert space $\hil$ as the complex system  by simply taking the $\cc_{\uI}$-linear part of the quaternionic scalar product. As pointed out by Adler, this introduces new orthogonality relations so that the vectors $|h_{\ell}\rangle$ and $|h_{\ell}\uJ\rangle$, which describe the same state in the quaternionic system, correspond to different states in the complex one. The phase space $\hil_{\cc}$ of a complex system that is equivalent to a quaternionic one can hence not be the entire quaternionic Hilbert space $\hil$. It must be a subspace that does not contain $|h_{\ell}\uJ\rangle$ whenever $|h_{\ell}\rangle\in\hil_{\cc}$. The natural candidate for such space consists of all vectors that have complex wave functions such that $F_2(x) \equiv 0$. It is however not clear whether any state has a representative in this space, whether it is invariant under time translations etc. The reason for this is that the left multiplication \eqref{LMultiAdler}, which determines the component functions of the wave function and in turn this set of vectors, is not motivated by physical arguments but by the fact that it is convenient to work with.

 Instead we have to consider a complex system on the complex subspace
 \[
\hil_{\JJ_{H},\uI}^{+} := \{ \vv \in\hil_{H}: \JJ_{H}\vv = \vv \uI\},
\]
where $\JJ_{H}$ is the unitary anti-selfadjoint operator that appears in the polar decomposition of the anti-selfadjoint Hamiltonian $H$.  From our previous discussion we already know that this system is equivalent to the system on the entire quaternionic space  if $\JJ_{H}$ commutes with any observable. The scalar product of any two vectors in this subspace is moreover naturally complex, so that we do not have to remove information by discarding the symplectic part in the quaternionic scalar product and hence no additional orthogonality relations are introduced.

We conclude by observing that quaternionic quantum mechanics as developed in \cite{Adler:1995} is not actually a quaternionic theory. A proper quaternionic theory would consider equivalence classes of eigenvalues---spectral spheres as they are determined by the $S$-eigenvalue operator---and then admit arbitrary vectors in the associated eigenspaces to represent states of the system. In particular it would admit arbitrary quaternionic phases in \eqref{QuatPhase}. The current version of quaternionic quantum mechanics however chooses eigenvectors associated with eigenvalues in one fixed complex plane $\cc_{\uI}$ and then only works with these vectors. (This was of course also done because a proper theory of quaternionic linear operators and in particular the definition of the $S$-spectrum were not known when the theory was developed.) From our perspective, this does however correspond to considering the quaternionic system actually as a complex system over the complex field $\cc_{\uI}$ and unconsciously working only in the Hilbert space $\hil_{\JJ_{H},\uI}^{+}$.

\bibliography{/Users/jonathan/Dropbox/Bibliothek}
\bibliographystyle{plain}

\textsc{Politecnico di Milano, Dipartimento di Matematica, Via E. Bonardi, 9, 20133 Milano, Italy}\\
E-Mail: gantner.jonathan@gmail.com

\end{document}